\newtheorem{thm}{Theorem}[section]
\newtheorem{lem}[thm]{Lemma}
\newtheorem{defn}[thm]{Definition}
\newtheorem{prop}[thm]{Proposition}
\newtheorem{cor}[thm]{Corollary}
\newtheorem{rmk}[thm]{Remark}
\newcommand{\be}{\begin{eqnarray}}
\newcommand{\ee}{\end{eqnarray}}
\newcommand{\beal}{\begin{aligned}}
\newcommand{\enal}{\end{aligned}}
\newcommand{\eps}{\varepsilon}
\newcommand{\al}{\alpha}
\newcommand{\gm}{\gamma}
\newcommand{\ga}{\gamma}
\newcommand{\lb}{\lambda}
\newcommand{\C}{\mathbb{C}}
\newcommand{\N}{\mathbb{N}}
\newcommand{\T}{\mathbb{T}}
\newcommand{\R}{\mathbb{R}}
\newcommand{\Q}{\mathbb{Q}}
\newcommand{\bY}{\mathbb{Y}}
\newcommand{\ZZ}{\mathbb{Z}}
\newcommand{\RR}{\mathbb{R}}
\newcommand{\om}{\omega}
\newcommand{\rr}{\rho}
\newcommand{\de}{\delta}
\newcommand{\cU}{\mathcal{U}}
\newcommand{\cV}{\mathcal{V}}
\newcommand{\cD}{\mathcal{D}}
\newcommand{\cS}{\mathcal{S}}
\newcommand{\cB}{\mathcal{B}}
\newcommand{\cC}{\mathcal{C}}
\newcommand{\cG}{\mathcal{G}}
\newcommand{\cT}{\mathcal{T}}
\newcommand{\wh}{\widehat }
\newcommand{\wt}{\widetilde }
\newcommand{\pol}{\mathrm{pol}}
\newcommand{\lin}{\mathrm{lin}}
\newcommand{\col}{\mathrm{col}}
\title[Asymptotic density of collision orbits in the RCPTBP]{Asymptotic density of collision orbits in the Restricted Circular Planar 3 Body Problem}
\author{Marcel Guardia\dag}
\address{\dag\ Departament de Matem\`atiques, Universitat Polit\`ecnica de Catalunya}
\email{marcel.guardia@upc.edu}
\author{Vadim Kaloshin\ddag}
\address{\ddag\  University of Maryland, College Park, MD, USA}
\email{vadim.kaloshin@gmail.com}
\author{Jianlu Zhang*}
\address{*\ Institute of Theoretical Studies, ETH Z\"urich, CH-8092 Z\"urich, Switzerland}
\email{jellychung1987@gmail.com}
\thanks{This project has received funding from the European Research Council (ERC) under the European Union’s Horizon 2020 research and innovation programme (grant agreement No 757802). M. G. has been also partially supported by the Spanish MINECO-FEDER Grant MTM2015-65715-P. 
V.K. acknowledges partial support of the NSF grant DMS-1402164. The three 
authors acknowledges the hospitality of the ETH Institute for Theoretical 
Studies and the support of Dr. Max
R\"ossler, the Walter Haefner Foundation and the ETH Zurich Foundation. }
\subjclass{Primary 37Jxx; Secondary 70Hxx}
\keywords{restricted circular planar three-body problem, Collisions}
\date{}
\begin{document}
\maketitle
\begin{abstract}
For the Restricted Circular Planar  3 Body Problem, we show that there exists 
an open set $\cU$ in phase space independent of fixed measure,  where the set 
of initial points which lead to collision is 
$O(\mu^\frac{1}{20})$ dense as $\mu\rightarrow 0$.
\end{abstract}
\section{Introduction }

\medskip 
Understanding solutions of the Newtonian 3 body problem 
is a long standing classical problem. There is not much of 
a hope to give a precise answer given an initial condition. 
However, one hopes to give a qualitative classification. For example, 
divide solutions into several classes according to qualitative asymptotic
behavior and describe geometry and measure theoretic 
properties of each set. The first such an attempt probably 
goes back to Chazy  \cite{Cha}.  

\subsection{Chazy's classification and Kolmogorov's conjecture}
Ignore solutions not defined for all times,  then one possible direction is 
to study qualitative behavior of bodies as time tends to 
infinity either in the future or in the past. 
In 1922 Chazy \cite{Cha} gave a classification of all possible types 
of asymptotic motions (see also \cite{AKN}). Denote $r_k$ the vector from $q_i$ 
to 
$q_j$ with $i \ne k,\ j \ne k, i < j$. 

\begin{thm}[Chazy, 1922]
Every solution of the 3 body problem defined for all times belongs to one of 
the following seven classes: 
\begin{itemize}
\item $\mathcal H^+$ (hyperbolic): $|r_k| \to \infty,\ |\dot r_k| \to c_k\ne 0$ as $t \to +\infty$; 
\item $ \mathcal{HP}^+_k$ (hyperbolic-parabolic): $|r_k| \to \infty,\ |\dot 
r_k| \to 0, \ |\dot r_i| \to c_i > 0\ (i \ne k)$; 
\item $\mathcal{HE}^+_k$ (hyperbolic-elliptic): $|r_k| \to \infty,\ |\dot r_i| \to c_i > 0 (i\ne k),\ 
\sup_{t\ge 0} |r_k| < \infty$; 
\item $\mathcal{PE}^+_k$ (hyperbolic-elliptic): $|r_k| \to \infty,\ |\dot r_i| \to 0\ (i \ne k),\ \sup_{t\ge 0} |r_k| < \infty;$
\item $\mathcal P_+$ (parabolic) $|r_k| \to \infty,\ |\dot r_k| \to 0$; 
\item $\mathcal B^+$ (bounded): $\sup_{t\ge 0} |r_k| < \infty$; 
\item $\mathcal{OS}^+$ (oscillatory): $\lim \sup_{t\to \infty} \max_k |r_k| = \infty,\ \lim \inf_{t\to \infty} \max_k |r_k| < \infty$. 
\end{itemize}
\end{thm}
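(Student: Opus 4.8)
My plan is to prove Chazy's classification through the interplay of the two conserved quantities, the Lagrange--Jacobi identity, and an asymptotic integration of the equations of motion. Working in the center-of-mass frame, write the potential energy $V=-\sum_{i<j}m_im_j/|q_i-q_j|<0$ and the kinetic energy $T\ge 0$, so that the total energy $E=T+V$ and the angular momentum are conserved, and set $I=\tfrac12\sum_i m_i|q_i|^2$. The backbone of everything is the Lagrange--Jacobi identity $\ddot I=2E-V=T+E$, which I will use both to detect quadratic growth of $I$ and to control the sign of $\ddot I$ for large time. I first record a purely kinematic constraint coming from the triangle inequality $|r_k|\le|r_i|+|r_j|$: it is impossible for exactly one of the three mutual distances to tend to infinity. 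Hence, up to relabeling, the \emph{escape pattern} is one of three types: no distance escapes; exactly two escape, say $|r_i|,|r_j|\to\infty$ with $\sup_t|r_k|<\infty$ (one body leaves a bounded binary); or all three escape (total dispersal).

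Next I would settle the gross trichotomy by examining $\max_k|r_k|$. If $\limsup_{t\to\infty}\max_k|r_k|<\infty$ the motion is bounded and we are in class $\mathcal B^+$; if $\limsup_{t\to\infty}\max_k|r_k|=\infty$ while $\liminf_{t\to\infty}\max_k|r_k|<\infty$ we are in the oscillatory class $\mathcal{OS}^+$ by definition; in the remaining case $\max_k|r_k|\to\infty$ and the kinematic constraint above forces either one body to escape or all three to disperse. In the two escaping regimes the potential contributions of the growing distances vanish, so $V$ tends either to $0$ (total dispersal) or to the finite negative binary potential $V_\infty$ (one body escaping), and the Lagrange--Jacobi identity gives $\ddot I\to 2E-V_\infty$; thus $\lim_{t\to\infty}I/t^2$ exists, and its positivity is equivalent to the presence of an escape with positive limiting relative speed.

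It then remains to read off the rate for each escaping pair. For a pair whose distance $r$ grows, the relative acceleration $\ddot r$ equals the direct Kepler force $-\kappa r/|r|^3$ plus cross terms that either vanish or come from the bounded binary; integrating this asymptotically, I would show that $|\dot r|$ converges to a limit $c\ge 0$ and identify $\lim I/t^2$ with (half) the sum of $\mu_{ij}c_{ij}^2$ over the escaping pairs. A pair is then \emph{hyperbolic} when $c>0$ and \emph{parabolic} when $c=0$. Combining this with the escape pattern enumerates exactly the seven classes: total dispersal with all $c_k>0$ is $\mathcal H^+$, with all $c_k=0$ is $\mathcal P^+$, and with exactly one $c_k=0$ is $\mathcal{HP}_k^+$; one body escaping a binary is $\mathcal{HE}_k^+$ when the escape speed is positive and $\mathcal{PE}_k^+$ when it is zero; and the non-escaping alternatives give $\mathcal B^+$ and $\mathcal{OS}^+$.

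The main obstacle is the asymptotic integration in the third step, namely proving that $\lim_{t\to\infty}|\dot r_k|$ genuinely exists rather than merely oscillating between a $\liminf$ and a $\limsup$. Convergence of $\dot r$ amounts to the time-integrability $\int^\infty|\ddot r|\,dt<\infty$ of the mutual force, and since that force decays like $|r|^{-2}$ one needs an a priori lower bound such as $|r(t)|\gtrsim t^{2/3}$ to close the estimate. Establishing these lower bounds, and in particular ruling out recurring late close approaches of the escaping body to the binary, is a delicate bootstrap that I would drive by the eventual convexity and monotonicity of $I$ furnished by Lagrange--Jacobi together with the energy bound. The genuinely hard borderline is the parabolic regime, where the growth is the critical rate $t^{2/3}$, the force decays only like $t^{-4/3}$, and Chazy's finer asymptotic expansions are needed to confirm both that $|\dot r|\to0$ and that the configuration cannot relapse into oscillation; this critical case is where essentially all of the analytic difficulty is concentrated.
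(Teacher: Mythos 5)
The paper does not prove this statement: it is quoted as a classical result of Chazy \cite{Cha} (see also \cite{AKN}), so there is no in-paper argument to compare yours against, and I can only assess your outline on its own terms. Your strategy --- conserved quantities, the Lagrange--Jacobi identity $\ddot I = 2E - V = T + E$, a trichotomy on $\max_k|r_k|$, and asymptotic integration of the escaping pairs --- is indeed the classical route, and your enumeration of the seven classes from escape patterns and limiting speeds is the right combinatorics (you should also note explicitly that ``exactly two parabolic pairs among three escaping ones'' is excluded by the triangle inequality, which is why no such class appears in the list).

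As a proof, however, the proposal has genuine gaps beyond the ones you flag. First, the step ``$\max_k|r_k|\to\infty$ together with the kinematic constraint forces either one body to escape or total dispersal'' does not follow: the triangle inequality only rules out \emph{exactly one} distance tending to infinity; it does not rule out, say, $|r_1|$ oscillating between bounded and unbounded values while $|r_2|,|r_3|\to\infty$, a behavior compatible with $\max_k|r_k|\to\infty$ that belongs to none of your escape patterns. Excluding it requires the dynamical analysis, yet your step 3 (asymptotic integration) presupposes that each escaping distance already ``grows'' in a definite way, so the argument as organized is circular: the settling into a definite escape pattern and the convergence of the velocities must be established simultaneously, and that joint statement is precisely Chazy's hard theorem. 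Second, in the hyperbolic--elliptic regime the assertion $\ddot I\to 2E-V_\infty$ is false pointwise: the surviving binary's potential term oscillates (and need not even be bounded, since the inner eccentricity can be close to $1$), so $\ddot I$ does not converge; only its time averages do, and the existence of $\lim I/t^2$ requires an averaging argument you have not supplied. Finally, the analytic core --- the a priori lower bound $|r(t)|\gtrsim t^{2/3}$, the integrability $\int^\infty|\ddot r|\,dt<\infty$, and the parabolic borderline --- is named but not carried out, and you yourself locate ``essentially all of the analytic difficulty'' there. What you have is an accurate road map of Chazy's proof, not yet a proof.
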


Examples of the first six types were known to Chazy. The existence of oscillatory motions was proved by
Sitnikov \cite{Si} in 1959. The next natural question is to evaluate the measure of each of these sets. 
It turns out that the answer is known for all sets except one, see the table below. The remaining set 
is the set of oscillatory motions. Proving or disproving that this set has measure zero is the central 
problem in qualitative analysis of the 3 body problem.

\vskip 0.25in

\begin{center}
\begin{tabular}{|c|c|c|}
\hline
\multicolumn{3}{|c|}{Positive energy $H>0$} \\ \hline
 & $H^+$ & $HE_i^+$ \\ \hline
 & & \\
 & Lagrange, 1772  & PARTIAL CAPTURE \\
 & (isolated examples); & Measure $>0$ \\
$H^-$ & Chazy, 1922  & Shmidt (numerical examples), 1947; \\
 & Measure $>0$ & Sitnikov (qualitative methods), 1953 \\ \hline
 & & \\
 & COMPLETE DISPERSAL & $i=j$ \quad Measure $>0$ \\
 & Measure $>0$ & Birkhoff, 1927 \\ \cline{3-3}
  $HE_j^-$  & & $i \ne j$ \quad EXCHANGE, Measure $>0$ \\
  & & Bekker (numerical examples), 1920; \\
  & & Alexeev (qualitative methods), 1956 \\ \hline
\end{tabular}

\vskip 0.5in

\begin{tabular}{|c|c|c|c|}
\hline
\multicolumn{4}{|c|}{Negative energy $H<0$} \\ \hline
    & $HE_i^+$ & $B^+$ & $OS^+$ \\ \hline
 & & & \\
 & $i=j$ \quad  Measure $>0$ & COMPLETE &  \\
 & Birkhoff, 1927 & CAPTURE & \\ \cline{2-2}
 & Exchange & Measure $=0$ & Measure $=0$ \\
 & $i\ne j$ \quad Measure $>0$ & Chazy,1929 \& Merman,1954; &
 Chazy,1929 \& Merman,1954; \\
 $HE_j^-$ & Bekker, 1920 & Littlewood, 1952; & Alexeev, 1968 \\
 & (numerical examples); & Alexeev, 1968; & $\ne \emptyset$ \\
 &  Alexeev, 1956; & $\ne \emptyset$ & \\
 & (qualitative methods) & & \\  \hline
 & & & \\
 & PARTIAL & Euler, 1772; & Littlewood, 1952 \\
 & DISPERSAL & Lagrange, 1772 & Measure $= 0$ \\
  $B^-$ & $\ne \emptyset$ & Poincare, 1892 & Alexeev, 1968 \\
 & Measure $= 0$ & (isolated examples); & $\ne \emptyset$ \\
 &  & Arnold, 1963 & \\ \hline & & & \\
 & $\ne \emptyset$ & $\ne \emptyset$ & Sitnikov, 1959, \\
     $OS^-$  & Measure $= 0$ & Measure $= 0$ & $\ne \emptyset$ \\
      &  & & Measure $= ?$ \\ \hline
\end{tabular}
\end{center}

\vskip 0.25in

Thus, the remaining major open problem is the following 
\medskip 

\begin{center} {\it  Conjecture (\,{\bf Kolmogorov})}
{\it The set of oscillatory motions has zero Lebesgue measure. }
\footnote{In \cite{A} Alexeev  attributes the conjecture that 
the set of oscillatory motions has measure zero to Kolmogorov. 
In \cite{A2} Kolmogorov is not mentioned.}
\end{center}

\subsection{The oldest open question in dynamics
and non-wandering orbits}

Now we give a different look at the classification of 
qualitative behavior of solutions. 
In the 1998 International Congress of Mathematicians, 
Herman \cite{Her} ended his beautiful survey of open 
problems with the following question, which he called
{\it``the oldest open question in dynamical systems''}. 
Let us recall the definition of a non-wandering point.
\begin{defn}\label{defi:wandering}
Consider a a dynamical system $\{\phi_t\}_{t\in\RR}$ defined on a topological space $X$. 
Then, a point $x\in X$  is called {\sf wandering}, if there exists a neighborhood $\cV$ of it and $T>0$, 
such that $\phi(t, \cV)\cap\cV=\emptyset$ for all $t>T$.

Conversely, $x\in X$ is called {\sf non wandering}, if for any neighborhood 
$\cV$ of $z$ and 
any $T>0$, there  exists $t>T$ such that $\phi(t, 
\cV)\cap\cV\neq\emptyset$.
\end{defn}

Consider the  $N$-body problem in space with  $N\ge 3$. Assume that,
\begin{itemize}
\item The center of mass is fixed at 0. 
\item On the energy surface we $C^\infty$-reparametrize the flow 
by a $C^\infty$ function $\psi_E$ (after reduction of the center of 
mass) such that the flow is complete: we replace $H$ by 
$\psi_E(H_E) = H_E$ so that the new flow takes an infinite time to 
go to collisions ($\psi_E$ is a $C^\infty$ function). 
\end{itemize}
Following Birkhoff \cite{Bi} (who only considers the case 
$N = 3$ and nonzero angular momentum) (see also 
Kolmogorov \cite{K}), Herman asks the following question:
\medskip 

{\bf Question 1} {\it Is for every $E$ the nonwandering set of the Hamiltonian 
flow of $H_E$ on $H^{-1}_E (0)$ nowhere dense in 
$H^{-1}_E (0)$?}

In particular, this would imply that 
the bounded orbits are nowhere dense and no topological 
stability occurs.  \medskip 
 
%

It follows from the identity of Jacobi-Lagrange that when 
$E \ge 0$, every point such that its orbit is defined for all times, 
is wandering. The only thing known is that, even when $E < 0$, 
wandering sets do exist (Birkhoff and Chazy, see Alexeev 
\cite{A} for references). 
\smallskip

The fact that the bounded orbits have positive Lebesgue-measure 
when the masses belong to a non empty open set, is a remarkable 
result announced by Arnold \cite{Ar} (Arnold gave only a proof 
for the planar 3 body problem;  see also
\cite{Rob1, Rob2, CP,Fe2}). 
In some respect Arnold's claim proves 
that Lagrange and Laplace, who believed on the stability of the Solar system, are correct in the sense of 
measure theory. On the contrary, in the sense of topology, the above 
question, in some respect, could show Newton, who believed the Solar system to be unstable, to be correct.

\subsection{Collisions are frequent, are they? }

The above discussion relies on solutions 
being well defined for {\it all time.} It leads to 
the analysis of the set of solutions with a collision.
Saari \cite{Sa1,Sa2} (see also \cite{Knauf1, Knauf2}) proved that this set has 
zero measure. 
However, they might form a topologically ``rich'' set. 
Here is a question which is proposed by Alekseev \cite{A}
and might be traced back to Siegel, Sec. 8, P. 49 in \cite{S}.\medskip

{\bf Question 2} {\it Is there an open subset $\cU$ of the phase space 
such that for a dense subset of initial conditions the associated 
trajectories go to a collision? }\medskip

The geometric structure of the collision manifolds locally was given 
by Siegel in \cite{S}, by applying the Sundmann regularization of double collisions. 
But the above question is still open. In the current article we consider a 
special case: 
the restricted planar circular 3 body problem and give a partial answer.\medskip 

Marco and Niederman \cite{MN},  Bolotin and McKay \cite{BM,BM1} and Bolotin \cite{B, B1, B2} studied collision and near collision solutions.
Chenciner--Libre \cite{CL} and Fejoz \cite{F} constructed so-called 
punctured tori, i.e. tori with quasiperiodic motions passing through 
a double collision (see also  \cite{Z}). In this paper we only deal with double collisions. 
Triple collisions have also been thoroughly studied (see \cite{Moe2, Moe3, 
Moe1} and 
references therein). 

\subsection
{\bf Restricted Circular Planar 3 Body Problem (RCP3BP)} 
Consider  two massive bodies (the primaries), which we
call {\it the Sun} and {\it Jupiter}, moving under the influence of the mutual Newton gravitational force. Assume they perform circular motion. 
We can normalize the mass of Jupiter by $\mu$ and the
Sun by $1-\mu$ and fix  the center of mass  at zero. The {\it restricted planar circular 3 body problem (RPC3BP)} models the dynamics of a third body, which we call
 {\it the Asteroid}, that  has mass zero and moves by the influence of
the gravity of the primaries. In rotating coordinates, the dynamics of the
Asteroid is given by the Hamiltonian 
\be\label{r3b-e}
H_\mu(x,y)=\frac{|y|^2}{2}- x^t Jy-\frac{\mu}{|x-(1-\mu,0)|}-\frac{1-\mu}{|x-(-\mu,0)|},
\ee
where $x\in\R^2$ is the position, $y\in\R^2$ is the conjugate momentum and 
\[
J=\begin{pmatrix}
 0     & 1   \\
   -1   &  0
\end{pmatrix}
\]
is the standard symplectic matrix. The positions of the
primaries are always fixed at $(-\mu,0)$ (the Sun) and $(1-\mu,0)$ (Jupiter)
respectively. In addition, the system is conservative and  $J=-2H_\mu(x,y)$ is
called the  {\it Jacobi Constant}.

An orbit $\gamma(t)=(x(t),y(t))$ of
(\ref{r3b-e}) is called a {\it collision orbit}, if in finite time $T$ we have
either  $x(T)=(1-\mu,0)$ or $x(T)=(-\mu,0)$. Then, Siegel question can be
rephrased as whether there exists an open set $\mathcal U$ in phase space
independent of $\mu$ where the collision orbits are dense. The main result of
this paper is the following.

\begin{thm}[First main Result]\label{main}
There  exists an open set $\cU$ independent of $\mu>0$ where the 
collision orbits of the Hamiltonian $H_\mu$  in (\ref{r3b-e}) are
$O(\mu^{\frac{1}{20}})$ dense as $\mu$ tends to zero.
 \end{thm}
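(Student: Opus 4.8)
The plan is to produce, near an arbitrary target point $p\in\cU$, a genuine collision orbit lying within $O(\mu^{1/20})$ of $p$, by a topological shooting argument. First I would fix the open set $\cU$ to consist of phase points whose unperturbed ($\mu=0$) Kepler ellipse around the Sun crosses the circle $|x|=1$ transversally at a longitude close to that of Jupiter, and whose eccentricity is bounded away from $0$ and $1$ so that the crossing is a genuine, well-separated close-encounter geometry rather than a grazing or radial one. On a fixed level of the Jacobi constant the $\mu=0$ flow is an integrable twist; I would use rotating Delaunay-type coordinates $(L,\ell,G,g)$ in which (after fixing the energy) only the mean anomaly $\ell$ advances, with nonvanishing twist $\partial\dot\ell/\partial L\neq 0$, while the remaining elements pin down the ellipse that is to be steered into Jupiter.

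Second, I would carry out the local analysis of a single close encounter with Jupiter. Rescaling a $\mu$-dependent neighborhood of the point $(1-\mu,0)$ blows up the Jupiter singularity and shows that, to leading order, the Asteroid executes a two-body Kepler motion about Jupiter along a hyperbolic arc. The essential quantitative output is a \emph{collision map}: parametrizing incoming orbits by a transverse offset (impact parameter) and a longitudinal phase relative to Jupiter, the closest-approach position to Jupiter becomes a smooth function of these two parameters, and aiming the offset to zero produces an exact collision $x(T)=(1-\mu,0)$. The key nondegeneracy is that this two-parameter aiming map is a local diffeomorphism onto a neighborhood of Jupiter, so that the ``miss vector'' sweeps once around $0$ as the incoming data traverse a small loop. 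Regularizing the double-collision singularity (Levi-Civita/Sundman style) is the device that makes this map smooth across near-collision configurations.

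Third comes the global shooting. Given $p$, I would take a two-dimensional disk $D$ of initial conditions of radius $\mu^{1/20}$ inside the energy surface near $p$, transverse to the $\mu=0$ flow, and flow it forward to the Jupiter encounter. Away from Jupiter the true flow is $C^1$-close to the integrable twist, and the twist spreads $D$ in the $(\ell,g)$ directions; composing this spread with the collision map of the previous step gives a ``miss vector'' map $\Phi\colon D\to\R^2$ whose boundary restriction $\Phi|_{\partial D}$ has nonzero winding number about $0$. A degree (Brouwer) argument then yields an interior point $z_\ast\in D$ with $\Phi(z_\ast)=0$, i.e.\ an exact collision orbit within $\mu^{1/20}$ of $p$; sweeping $p$ over $\cU$ gives the claimed density. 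The exponent $1/20$ is not optimal: it is what survives after balancing the competing requirements that the radius of $D$ be small enough for the integrable and perturbed flows to stay comparable up to and through the encounter, yet large enough that the twist generates the full winding, with the local regularization contributing further powers of $\mu$.

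The main obstacle I expect is precisely the matching in the third step: controlling the accumulated effect of the perturbation (including possible spurious extra near-passages of Jupiter) along the whole orbit from $D$ to the intended encounter, while simultaneously proving that the composite aiming map $\Phi$ stays a local submersion whose boundary restriction genuinely winds around $0$. Establishing this quantitative transversality and winding \emph{uniformly} in $\mu$ — rather than merely the existence of a single collision orbit — is what forces the careful bookkeeping of powers of $\mu$ and ultimately fixes the density exponent.
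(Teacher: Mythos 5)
Your skeleton (integrable far-field twist, Levi--Civita analysis of the encounter, topological intersection argument) parallels the paper's, but two of its load-bearing claims fail, and they are precisely the points where the paper has to work hardest.

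First, the key nondegeneracy in your local step is false. You claim the two-parameter aiming map (impact parameter, longitudinal phase) $\mapsto$ (closest-approach position relative to Jupiter) is a local diffeomorphism, so that the miss vector winds once around $0$. But in the rotating frame Jupiter is a \emph{fixed} point of configuration space, so initial data that differ only by a time shift along the flow --- which is what the longitudinal phase parametrizes to leading order --- trace out the same geometric orbit and have the \emph{same} closest-approach point: the aiming map is invariant along the flow direction and has rank at most one near its zeros. Dimension count confirms this: in the $3$-dimensional energy level the collision set $\{x=(1-\mu,0)\}\cap\{H_\mu=h\}$ is $1$-dimensional, so the set of collision orbits has codimension \emph{one}, not two. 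Consequently $\Phi^{-1}(0)$ is generically a curve crossing your disk $D$, the restriction $\Phi|_{\partial D}$ cannot stably avoid $0$, and the Brouwer winding number is either undefined or vanishes; the degree argument collapses. The correct topology is one dimension lower: a \emph{curve} of incoming data whose signed impact parameter changes sign contains a collision orbit. This is exactly how the paper concludes: the incoming curve $\cS_1$ of Proposition \ref{prop:r2-region} and the local collision curve $\Upsilon$ of Corollary \ref{coro:cross2} both lie in the $2$-dimensional surface $\mathcal M_h=\{|u|=\rho\mu^{1/2},\ H_\mu=h\}$, one nearly horizontal and one nearly diagonal in the $(\mathrm{arg}\,u,\mathrm{arg}\,v)$ plane, and they intersect by the intermediate value theorem.

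Second, the step you yourself flag as the main obstacle --- that ``away from Jupiter the true flow is $C^1$-close to the integrable twist'' for as long as it takes the twist to spread your data onto the collision phase --- is not an estimate one can obtain by direct comparison, and nothing in your proposal supplies a substitute. The spreading time is long (of order $\mu^{-1/10}$ in the paper's bookkeeping), the orbit crosses Jupiter's circle every revolution, and the perturbation is \emph{singular} there, so Gronwall-type closeness fails exactly for the orbits that approach collision, i.e.\ the ones you need. Moreover, quantitative $O(\mu^{1/20})$ density near an \emph{arbitrary} point needs an arithmetic input: the orbit must equidistribute fast enough to reach a $\mu^{3/20}$-neighborhood of the collision phase \emph{before} suffering an uncontrolled close encounter. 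The paper secures this by (i) cutting off the singularity to get $\widehat H_\mu$, (ii) applying Herman's KAM theorem to the induced Poincar\'e map to produce invariant tori with constant-type Diophantine frequency ratio, $\gamma$-densely with $\gamma\sim\mu^{1/20}$ (Corollary \ref{corollary:KAM:eps-close}), (iii) invoking Cassels' nonhomogeneous Dirichlet theorem to show the backward orbit of a collision point on such a torus spreads $\gamma$-densely within $q^*\sim\gamma^{-2}$ returns, and (iv) a two-collision exclusion argument guaranteeing that, for at least one of the two collision points on each torus, this spreading happens before the orbit re-enters the cut-off region, so that the flows of $H_\mu$ and $\widehat H_\mu$ coincide along it (Lemma \ref{lemma:DensityDelaunay}). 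The exponent $1/20$ is not generic bookkeeping: it is forced by the explicit constraints $\gamma\gtrsim\mu^{(1-6\tau)/2}$ (KAM applicability) and $\gamma^{-2}\gtrsim\gamma\mu^{-\tau}$ (spreading before return), solved by $\tau=3/20$, $\gamma\sim\mu^{1/20}$. Without (i)--(iv) or an equivalent mechanism, your third step is unsupported.
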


To  explain heuristically this result, consider first the case $\mu=0$.
Since for $\mu=0$ the system is integrable, any energy surface $\{H_0=h\}$ 
is foliated by invariant $2$-dimensional tori. They correspond to circular orbits of 
Jupiter and elliptic orbits of the Asteroid. It turns out that for 
$h\in(-3/2,\sqrt 2)$ there 
are open sets $U_h$ where the orbits of Jupiter and the Asteroid 
intersect, see  
Fig. \ref{i-0}. Due to the nontrivial dependence of the period of the Asteroid with 
respect to the semimajor axis of the associated ellipse, there is a dense subset of 
tori in $U_h$ such that periods of Jupiter and the Asteroid are incommensurable. 
As a result,  collision orbits are dense.  

The proof of Theorem \ref{main}  consists in justifying that similar phenomenon 
takes place for $\mu>0$. In this case there are collisions and 
the  Hamiltonian of the RPC3BP becomes singular. Notice that the collision 
in $\cU$ happens only between Jupiter and the Asteroid,  but not with the Sun. 
The Jupiter-Asteroid collisions were studied by Bolotin and McKay \cite{BM}. 
\begin{rmk}\label{rmk:MainBetterDensity}
 The density exponent in Theorem \ref{main} can be slightly improved from 
$\frac{1}{20}$ to $\frac{1}{17+\nu}$ for any $\nu>0$ by refining the proof. 
See Remarks \ref{rmk:HermanKAM1} and  \ref{rmk:HermanKAM2}. 
\end{rmk}

\begin{rmk}
The results given in the papers  \cite{CL,F}, which study the existence of KAM 
solutions containing collisions also lead to asymptotic density of 
collision orbits result. Nevertheless, those papers only lead to such density 
in very small sets. Let us note that in \cite{F} KAM tori 
passing through a collision can occupy a set of large positive measure provided 
that the distance among bodies is not 
uniformly bounded. 

Theorem \ref{main} gives asymptotic density in a ``big'' set 
independent of $\mu$. In Delaunay variables 
our set $\mathcal U$ is the interior of any compact set contained in 

\begin{equation}\label{eq:our-region} 
\mathcal V= \left\{
-\frac{1}{2L^2}-L\sqrt{1-e^2} \in (-2\sqrt 2,3), \qquad 
L^2(1-e)<1<L^2(1+e)\right\},
\end{equation}
where $0\le e<1$ is the eccentricity and $L^2>0$ is the semimajor 
axis (see Figure \ref{i-0}). In particular, the volume of this set 
can {\it exceed any predetermined constant}, provided that 
$\mu$ is small enough. See section \ref{sec:2BP} for more 
details.

\end{rmk}

With similar techniques, we can disprove a weak 
version of Herman's conjecture. Let us define approximately 
non-wandering points.



\begin{defn}\label{defi:wandering:approx}
Consider a a dynamical system $\{\phi_t\}_{t\in\RR}$ defined on a topological space $X$. 
Then, a point $x\in X$  is called $\delta$-non-wandering, 
if for any neighborhood $\cU$ of it containing 
the $\delta$-ball $B_\delta(x)$, there exists $T>1$
such that $\phi_T(\cU)\cap\cU\neq\emptyset$. \\ 
\end{defn}

\begin{thm}[Second main result]\label{main1}
Any point belonging to the  open set $\cU$ considered in Theorem \ref{main} is  $O(\mu^{\frac{1}{20}})-$non wandering under the flow associated to the Hamiltonian $H_\mu$  in (\ref{r3b-e})

More concretely, for any $z\in \cU$, we can find a
$O(\mu^{\frac{1}{20}})$-neighborhood $\cV_\mu$ of it and 
times  $0<T'_\mu<T_\mu$ such that 
$\phi_{H_\mu}(T'_\mu,\cV_\mu)$ is 
$O(\mu^{\frac{1}{20}})-$close to a collision and  
$\phi_{H_\mu}(T_\mu, \cV_\mu)\cap\cV_\mu\neq\emptyset$. \\
\end{thm}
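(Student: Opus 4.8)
\emph{Proposed proof sketch.} The plan is to combine two features that are already in place after Theorem \ref{main}: the reversibility of the RCP3BP and the presence, arbitrarily close to every point of $\cU$, of an invariant (KAM) torus carrying quasi-periodic and hence recurrent dynamics. First observe that the map $R(x_1,x_2,y_1,y_2)=(x_1,-x_2,-y_1,y_2)$ is an anti-symplectic involution with $H_\mu\circ R=H_\mu$, so $\phi_{H_\mu}(t,\cdot)\circ R=R\circ\phi_{H_\mu}(-t,\cdot)$; its fixed set $\{x_2=0,\ y_1=0\}$ consists of states that cross the Sun--Jupiter axis orthogonally, and the Jupiter collision point $(1-\mu,0)$ lies on this axis. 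Second, to establish $O(\mu^{1/20})$-non-wandering at a point $z$ it suffices to treat the minimal admissible neighbourhood $\cV_\mu=B_{C\mu^{1/20}}(z)$: if $\phi_{H_\mu}(T_\mu,\cV_\mu)\cap\cV_\mu\neq\emptyset$ for this ball, then the same $T_\mu$ works for every larger neighbourhood containing it, since images and the sets themselves only grow.

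Fix $z\in\cU$ and set $\cV_\mu=B_{C\mu^{1/20}}(z)$. The proof of Theorem \ref{main} provides, at scale $O(\mu^{1/20})$, invariant tori persisting from the integrable foliation via Herman's KAM theorem; choose one such torus $\cT$ meeting $\cV_\mu$ and carrying a Diophantine (in particular irrational) frequency vector, and pick $p\in\cT$ with $|p-z|=o(\mu^{1/20})$. Because $\cT\subset\cU$ corresponds to an Asteroid ellipse that crosses Jupiter's circle and the two frequencies are rationally independent, the relative phase of Asteroid and Jupiter at the crossing equidistributes; hence the orbit of $p$ approaches Jupiter to within $O(\mu^{1/20})$ at some time $T'_\mu$, so $\phi_{H_\mu}(T'_\mu,\cV_\mu)$ is $O(\mu^{1/20})$-close to a collision. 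By quasi-periodicity the same orbit is recurrent and returns to within $o(\mu^{1/20})$ of $p$, thus into $\cV_\mu$, at infinitely many times; selecting such a return time $T_\mu>T'_\mu$ gives $\phi_{H_\mu}(T_\mu,p)\in\cV_\mu$ and therefore $\phi_{H_\mu}(T_\mu,\cV_\mu)\cap\cV_\mu\neq\emptyset$. The reversibility $R$ enters as a structural refinement: arranging the near-collision to occur at an orthogonal axis-crossing places $\phi_{H_\mu}(T'_\mu,p)$ on $\mathrm{Fix}(R)$, whence $\phi_{H_\mu}(2T'_\mu,p)=Rp$, so that the returning branch of the collision excursion is symmetric; when $z$ itself lies near $\mathrm{Fix}(R)$ this already returns the orbit to $\cV_\mu$, and in general it explains the near-symmetry of the excursion that the recurrence argument then closes up.

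The main obstacle is quantitative and consists in matching two length scales on a \emph{single} invariant torus. One must guarantee both that the Herman--KAM tori underlying Theorem \ref{main} are genuinely $O(\mu^{1/20})$-dense in $\cU$ (the estimate sharpened in Remarks \ref{rmk:HermanKAM1}--\ref{rmk:HermanKAM2}) and that the selected torus admits an orbit whose closest approach to Jupiter is $O(\mu^{1/20})$. The latter requires controlling the equidistribution rate of the crossing phase uniformly as $\mu\to0$, and working in Levi-Civita--regularized coordinates so that the flow, the torus, and these distance estimates remain well defined through arbitrarily close approaches to the singularity. The times $T'_\mu$ and $T_\mu$ may both grow as $\mu\to0$, which is harmless since only $T_\mu>T'_\mu>1$ is needed; reconciling density at scale $\mu^{1/20}$ with approach at scale $\mu^{1/20}$ on the same torus is the delicate point, while everything else follows from quasi-periodic recurrence together with the reversible structure.
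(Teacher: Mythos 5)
Your argument has a genuine gap at its central step: you treat the KAM tori as invariant under the true flow $\phi_{H_\mu}$ and then invoke quasi-periodic recurrence of the orbit of $p$ ``at infinitely many times.'' But the tori produced in the proof of Theorem \ref{main} (Corollary \ref{corollary:KAM:eps-close}) are invariant only for the \emph{modified} Hamiltonian $\widehat H_\mu$ of \eqref{modi-h}--\eqref{i-i-h}, in which the singularity has been cut off by a bump function; they cannot be invariant for $H_\mu$, whose flow is not even complete near collision. The two flows coincide only while the orbit stays outside the neighbourhoods $\cB_{\cT}^{\pm}$ of radius $C\mu^{3/20}$ around the two collision points on the torus (item (2) of Lemma \ref{lemma:DensityDelaunay}, and Lemma \ref{lemma:DensityDelaunayrecurrent}). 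Since the orbit you pick is dense on the torus, under the true flow it must eventually enter $\cB_{\cT}^{\pm}$, after which it leaves the torus and all control is lost; in particular ``returns to within $o(\mu^{1/20})$ of $p$ at infinitely many times'' is false for $\phi_{H_\mu}$. What must be shown --- and this is precisely the content of the paper's Lemma \ref{lemma:DensityDelaunayrecurrent} --- is that a single \emph{finite} time window $[0,T]$, $T\lesssim \mu^{-1/10}$, can be found during which the orbit (i) stays outside both $\cB_{\cT}^{+}$ and $\cB_{\cT}^{-}$, so that the $H_\mu$- and $\widehat H_\mu$-orbits agree, (ii) passes $O(\mu^{1/20})$-close to a collision, and (iii) returns $O(\mu^{1/20})$-close to its starting point. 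You in fact notice this tension (``reconciling density at scale $\mu^{1/20}$ with approach at scale $\mu^{1/20}$ on the same torus is the delicate point''), but you do not resolve it; quasi-periodic recurrence cannot resolve it, because recurrence of $\widehat H_\mu$-orbits says nothing about $H_\mu$-orbits once the collision neighbourhood is entered.

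The paper's resolution is quantitative and combinatorial: the orbit is based at a point $z_0$ at distance $\sim\mu^{3/20}$ from a collision point, so the near-collision passage occurs at a definite controlled time; the constant-type Diophantine condition $\omega\in B_\gamma$ with $\gamma\sim\mu^{1/20}$, combined with Cassels' nonhomogeneous Dirichlet theorem (Theorem \ref{thm:Cassels}), shows that the first $q^*\sim\gamma\mu^{-\tau}\sim\mu^{-1/10}$ forward and backward iterates on the section $\{g=0\}$ are simultaneously $\gamma$-dense and bounded away from both collision points; and a separate argument (shifting the base point from $z_0$ to $z_1$) handles the possibility that the orbit approaches the \emph{second} collision. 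None of this can be bypassed: the exponents $1/20$, $3/20$, $1/10$ arise exactly from balancing the density of tori ($\gamma\gtrsim\sqrt{\eps}=\mu^{(1-6\tau)/2}$), the density of the orbit within the allowed time window, and the size of the excluded collision neighbourhoods. Separately, your reversibility remark does not repair the gap: $\phi_{H_\mu}(2T'_\mu,p)=Rp$ would require $\phi_{H_\mu}(T'_\mu,p)$ to lie exactly on $\mathrm{Fix}(R)$, which nothing in your construction arranges; since you flag it as inessential, this is a minor point, but it means the only mechanism your proof offers for the return is the recurrence claim that fails.
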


We devote the main part of this paper to prove Theorem \ref{main}. Then in 
Section \ref{sec:Wandering}, we prove Theorem \ref{main1} by using the partial 
results obtained in Section \ref{sec:R1} to prove Theorem \ref{main}.

\begin{rmk} The existence of 
$O(\mu^{\frac{1}{20}})$-non-wandering sets for the RPC3BP 
is not a new result. In some ``collisionless'' regions of  phase 
space it follows from the KAM Theorem for small $\mu$. 
Theorem \ref{main1} extends such property to a ``collision'' 
region of the phase space $\mathcal U$, see 
\eqref{eq:our-region}. Moreover, we believe that if 
Alekseev conjecture were true, application of our method 
would give a dense wandering set in $\mathcal U$ and 
{\it contradict Herman's conjecture!}
\end{rmk}

We finish this paper by summarizing the scheme and the main heuristic ideas of the proof of Theorem \ref{main}.\medskip 

{
\noindent{\bf Scheme of the proof of Theorem \ref{main}:} For the convenience of a local analysis, we shift the position of Jupiter to zero, i.e. via the transformation 
\[
\Psi_0: u=x-(1-\mu,0),\quad v=y-(0,1-\mu)
\]
the Hamiltonian becomes
\begin{equation}\label{r3bp-local}
H_\mu(u,v)=\frac{|v|^2}{2}-u^tJ v-(1-\mu)u_1-\frac{\mu}{|u|}-\frac{1-\mu}{|u+1|}-\frac1{2}(1-\mu)^2.
\end{equation}
where $(u,v)\in\R^4$. Consider the following division of the phase space:
\begin{equation}\label{def:Regions}
\left\{
\begin{aligned}
R_1&:=\{|u|\geq\mu^{\frac{3}{20}}\},& \text{
Influence of the Sun dominates}\\
R_2&:= \{\rho\mu^{\frac 12}\leq|u|\leq\mu^{\frac{3}{20}}\},  
& \text{
Influence of the Sun \& Jupiter may be comparable}\\
R_3&:=\{0<|u|\leq\rho\mu^{\frac 12}\},\ 
&0\ <\mu\ll\rho\ll1,  \qquad 
\qquad \text{Influence 
of Jupiter dominates}
\end{aligned}
\right.
\end{equation}

\medskip
The proof of Theorem \ref{main} consists of three steps:
\begin{enumerate}
\item {\sf (From global to local)} For sufficiently small $0<\mu\ll1$, and any initial point $\mathbb X\in\cU$, we can find a segment $\mathfrak S$ 
of the length $O(\mu^{\frac {3}{20}})$ and 
$\mathrm{dist}(\mathfrak S,  \mathbb X)\leq O(\mu^{\frac{1}{20}})$ 
in the phase space, such that the push forward of $\mathfrak S$ 
along the flow of $H_\mu$ will become a segment 
\begin{equation}\label{1st segment}
\cS_0\subset \partial (R_2\cup R_3),
\end{equation}
which is a graph over the configuration space so that incoming velocity 
satisfies certain quantitative estimates (see Prop. \ref{prop:r1-region-main}
for more details and Fig. \ref{pc}). Inclusion (\ref{1st segment}) implies that $\cS_0$ lies in the boundary of the local region $R_1^c=R_2\cup R_3$. 
Now we turn to a local analysis summarized on Fig. \ref{lr}.\\

\item {\sf (Transition zone)} In this step we show that there exists
a subsegment $\cS'_0\subset \cS_0$ such that the push forward along 
the flow of $H_\mu$ becomes a segment
 \[
\cS_1\subset \partial R_3
\]
so that the shape of $\cS_1$ and incoming velocity satisfy certain 
quantitative estimates (see Proposition \ref{prop:r2-region}  and 
Fig. \ref{lr} \, for more details). In the region $R_2$, which is 
$\mu^{\frac {3}{20}}$-small we come with velocity $O(1)$ and show that 
linear approximation suffices, even though neither the Sun, nor
Jupiter have dominant effect in this region.\\


\item {\sf (Levi-Civita region and the local manifold of collisions)} 
In the region $R_3$, we can apply the Levi-Civita regularization and deduce 
a new system close to a linear hyperbolic system. We analyze 
the local manifold of collisions, denoted by $\Upsilon$ and show that 
$\cS_1$ intersects $\Upsilon$. This implies the existence of collision orbits 
starting from $\cS_1$, and, therefore, from $\mathfrak S$ (see Lemma \ref{collisional-cover} and Fig. \ref{cross}).\\
\end{enumerate}
}
{
\noindent{\bf Heuristic ideas in the proof:} Here we describe main ideas of the proof:
\begin{itemize}
\item {\sf (From global to local)} In order to control the long time evolution of $\mathfrak S$ 
we apply the following trick: Inside the local region $R_2\cup R_3$, we modify $H_\mu$ into
 $\widehat H_\mu$ by removing the singularity. This enables us to apply the KAM theorem. 
Thus we can pick up a segment $\mathfrak S$ on a suitable KAM torus $\cT_w$ and show
that the push forward along the flow of $H_\mu$ coincides with the flow of $\widehat H_\mu$, 
as long as it does not enter the collision region $R_2\cup R_3$. We also show that the final 
state of $\cS_0$ is a graph over the configuration space with almost constant velocity 
component. More precisely, for any point in $\cS_0$, the velocity is contained in 
a $O(\mu^{\frac {3}{20}})$ neighbourhood of a certain velocity $v_0$ 
(see Prop. \ref{prop:r1-region-main} and Fig. \ref{pc} for more details). \\

\item {\sf (Transition zone)} We start with the curve $\cS_0$, which has 
almost constant velocity. Then we flow the segment by the flow of $H_\mu$ 
using that it is close to linear. Controlling the evolution of the flow we get 
the desired estimate on the final state $\cS_1$ of $\cS_0$ 
(see Proposition \ref{prop:r2-region} and Fig.  \ref{lr}).\\


\item {\sf (Levi-Civita region and local collision manifold)} Once we have information about  $\cS_1$, the approximation by the linear hyperbolic system gives precise enough local information about the collisions manifold $\Upsilon$. This allows us to prove that $\cS_1\bigcap\Upsilon\neq\emptyset$ by using the intermediate value theorem  (see Lemma \ref{collisional-cover} and Fig. \ref{cross}). \\
\end{itemize}
}

\noindent{\bf Organization of this paper:}  The paper is 
organized as follows. In Section \ref{sec:2BP}, we introduce 
the  Delaunay coordinates and discuss the integrable 
Hamiltonian \eqref{r3b-e} with $\mu=0$. In Section \ref{sec:R1}, 
we analyze the dynamics ``far away'' from collisions (Step 1 of 
the Scheme of the proof). We  define the modified
Hamiltonian $\widehat H_\mu$ and we  apply the KAM theory. 
Then in Section \ref{sec:transition} we analyze the dynamics 
in the transition zone (Step 2).
In Section \ref{sec:LeviCivita}, we use 
the Levi--Civita regularization to analyze a small
neighborhood of the collision (Step 4). This completes 
the proof of Theorem \ref{main}. Finally, in the Appendix 
we provide basic formulas for Delaunay coordinates.\\

\noindent{\bf Acknowledgments:} The authors thank Alain Albouy,  Alain 
Chenciner and  Jacques F\'ejoz  for helpful discussions  and  remarks  on  a  
preliminary  version  of  the  paper.

\section{The collision set and density of collision orbits for $\mu=0$}\label{sec:2BP}

We start by considering Hamiltonian \eqref{r3b-e} with $\mu=0$. This simplified
model will give us the open set $\cV$ where to look for (asymptotic) density of
collisions. The analysis of this set was already done in \cite{BM}. 
Hamiltonian \eqref{r3b-e} with $\mu=0$ reads
\begin{equation}\label{r3b-0}
H_0(x,y)=\frac{|y|^2}{2}- x^t Jy-\frac{1}{|x|}.
\end{equation}
If we perform the classical Delaunay transformation (see
Appendix \ref{app:Delaunay})
$\Psi(x,y)=(\ell,g,L,G),$
which is symplectic,  to $H_0$ we obtain
\begin{equation}\label{0-r-a-a}
H_0(L,G)=-\frac{1}{2L^2}-G.
\end{equation}
We use these coordinates to define the set $\cV$ where collisions orbits are dense when $\mu=0$. We define also the eccentricity 
\begin{equation}\label{def:eccentricity}
e=e(L,G)=\sqrt{1-\frac{G^2}{L^2}}.
\end{equation}

\begin{figure}
\begin{center}
\includegraphics[width=6cm]{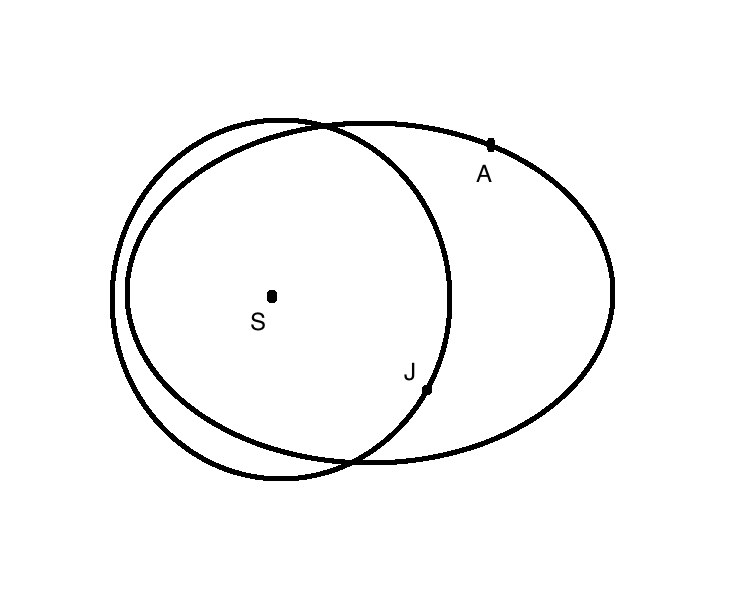}
\caption{Elliptic and circular orbits of Asteroid and Jupiter resp. for $\mu=0$.}
\label{i-0}
\end{center}
\end{figure}

\begin{lem}[\cite{BM}]\label{lemma:2BPdensity}
Fix $J\in(-2\sqrt 2,3)$ and define the open set 
\[
 \cV=\left\{(\ell,g,L,G)\in \T^2\times(0,+\infty)\times (-L, 0)\cup (0,L): \ \frac{G^2}{1+e}<1<\frac{G^2}{1-e}
 \right\}.
\]
Then, the set 
\[
 \cV_J=\cV\cap \{-2H_0=J\}
\]
contains a dense subset whose orbits  tend to collision.
\end{lem}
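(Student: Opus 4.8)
The plan is to work entirely in the Delaunay coordinates $(\ell,g,L,G)$, where by \eqref{0-r-a-a} the flow of $H_0$ is completely integrable and explicit. Since $H_0=-\frac{1}{2L^2}-G$, both $L$ and $G$ are conserved along orbits, so each orbit lives on a two-torus labelled by $(L,G)$, and the angles evolve linearly: $\dot\ell=\partial_L H_0=\frac{1}{L^3}$ and $\dot g=\partial_G H_0=-1$. Thus the frequency vector is $(\omega_\ell,\omega_g)=(L^{-3},-1)$, and the flow on each torus is a rigid rotation. My first step is to make the collision condition explicit in these coordinates. The Asteroid traces an ellipse of semimajor axis $a=L^2$ and eccentricity $e=\sqrt{1-G^2/L^2}$; in rotating coordinates Jupiter sits on the unit circle $|x|=1$. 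A collision between Asteroid and Jupiter is possible only if the Asteroid's ellipse actually crosses the circle $|x|=1$, i.e. if the perihelion $a(1-e)=L^2(1-e)$ is less than $1$ and the aphelion $a(1+e)=L^2(1+e)$ is greater than $1$. One checks these two inequalities are exactly the defining condition $\frac{G^2}{1+e}<1<\frac{G^2}{1-e}$ of the set $\cV$ after substituting $a(1\mp e)=L^2(1\mp e)$ and using $G^2=L^2(1-e^2)=L^2(1-e)(1+e)$; so on $\cV$ the ellipse and the circle genuinely intersect at (generically two) configuration-space points.

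The second step is to reduce ``the orbit leads to a collision'' to an arithmetic/resonance condition on the torus. Having fixed a torus $(L,G)\in\cV_J$ on which the two curves cross, there is a nonempty (relatively open in the torus) set $\mathcal{C}_{L,G}$ of phase points whose configuration projection lands exactly on an intersection point of ellipse and circle \emph{at the same moment} as Jupiter is there. Because the motion is a rigid rotation with frequency ratio $\omega_\ell/\omega_g=-L^{-3}$, the question of whether the orbit of a given initial point eventually hits the collision set reduces to whether a linear flow on $\T^2$ meets a prescribed codimension-one ``collision condition'' (the alignment of Asteroid and Jupiter at an intersection point). When the frequency ratio $L^{-3}$ is \emph{irrational}, the linear flow is minimal: every orbit is dense in the torus, hence every orbit meets the collision locus, so the full torus consists of collision-bound initial conditions. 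The key number-theoretic input is that the set of $L$ for which $L^{-3}\notin\Q$ is dense (indeed of full measure) in the relevant $L$-interval, by the nontrivial dependence of the period $2\pi L^3$ of the Asteroid on $L$ noted in the introduction.

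The third step assembles these into density. Fix $J\in(-2\sqrt2,3)$ and restrict to the three-dimensional level set $\cV_J=\cV\cap\{-2H_0=J\}$. On this level set $G=J/2-\frac{1}{2L^2}$ is determined by $L$, so $\cV_J$ is foliated by the two-tori $\{(L,G(L))\}$ as $L$ ranges over an interval $I_J$, with the angles $(\ell,g)$ free. The subset of $I_J$ for which $L^{-3}$ is irrational is dense in $I_J$, and over each such $L$ the entire torus is collision-bound by Step 2; the union of these tori is therefore dense in $\cV_J$, which is exactly the claim. I would present this as: given any point $p\in\cV_J$ and any $\varepsilon>0$, perturb its $L$-coordinate slightly (staying on the level set by adjusting $G$ accordingly, and keeping inside $\cV$ since $\cV$ is open) to an $L'$ with $(L')^{-3}$ irrational; the point $p'$ with this $L'$ and the same angles is within $\varepsilon$ of $p$ and lies on a minimal torus, hence is a collision orbit.

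The main obstacle, and the step deserving the most care, is the precise verification in Step 2 that irrationality of the frequency ratio forces an actual collision rather than mere density of the angle-orbit: one must confirm that the ``collision condition'' is a genuine proper closed condition that a dense (minimal) orbit is guaranteed to hit, and that hitting it corresponds to a true double collision $x(T)=(1\pm\text{Jupiter})$ in finite time $T$ rather than an asymptotic or grazing approach. Concretely, the intersection of ellipse and circle determines the true anomaly at which the Asteroid is at the crossing point, hence fixes $\ell$ modulo $2\pi$, while the requirement that Jupiter occupy the same configuration point fixes $g$ (equivalently $\ell-g$, the orientation) modulo $2\pi$; so the collision locus is a finite union of points $\{(\ell_*,g_*)\}$ on the torus, and minimality of an irrational linear flow indeed guarantees the orbit passes arbitrarily close, but passing \emph{exactly} through the collision requires the stronger fact that an irrational-slope line on $\T^2$ hits any prescribed point's forward orbit only in the limit. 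I would handle this by noting that the relevant collision set is not a single point but the $1$-parameter alignment condition ``Asteroid at a crossing point while Jupiter is there,'' which for a rigid rotation with irrational ratio is met in forward time for a dense (in fact full-measure) set of initial phases, and then restrict to those phases; alternatively, and more robustly, I would reduce to a \emph{commensurability-free} argument, following \cite{BM}, showing the return map of the crossing section is an irrational circle rotation whose orbit is dense, so that some iterate lands in any neighborhood of the exact collision configuration, and then use continuity of the crossing time together with the transversality of ellipse and circle to produce an exact collision.
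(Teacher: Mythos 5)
Your Step 1 and your overall architecture (Delaunay variables, the crossing condition \eqref{cond-3}, rigid rotation on the invariant tori, density of irrational frequency ratios in $L$) coincide with the paper's proof, but your Step 2 contains a genuine error that propagates into Step 3. You claim that when $L^{-3}$ is irrational ``every orbit meets the collision locus, so the full torus consists of collision-bound initial conditions.'' This is false. As you yourself observe later (contradicting your earlier assertion that the set $\mathcal{C}_{L,G}$ is relatively open), the collision locus on a fixed torus is a \emph{finite set of points} --- the two points \eqref{def:CollisionGraph2bp}. Minimality of an irrational linear flow gives only that every orbit is dense, i.e.\ passes arbitrarily close to these points; it does not make any particular orbit pass through them. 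The set of initial conditions on an irrational torus whose orbits hit collision exactly is the union of the backward orbits of the two collision points: a one-dimensional, Lebesgue-measure-zero, dense subset of the torus, not the whole torus. Consequently your Step 3 conclusion ``lies on a minimal torus, hence is a collision orbit'' does not follow.

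Neither of the repairs you sketch in the last paragraph closes this gap. The first asserts the set of collision-hitting phases is ``dense (in fact full-measure)''; full measure is wrong --- it is measure zero, exactly as it must be by Saari's theorem cited in the introduction. The second attempts to pass from ``some iterate lands in any neighborhood of the exact collision configuration'' to an exact collision ``by continuity of the crossing time together with transversality''; no such argument converts a near miss of a single fixed orbit into an exact hit. The correct fix --- and it is precisely what the paper does --- is to reverse the quantifiers: instead of asking which orbits reach the collision, take the collision point $\left(\ell^{\pm,0}_\col(L,G),\, g^{\pm,0}_\col(L,G)\right)$ itself and consider its full orbit under the flow. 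Every point on this orbit reaches the collision exactly, in finite time, and when $L^{-3}\notin\Q$ this single orbit is dense in the torus. The dense subset of $\cV_J$ required by the lemma is then the union of these collision orbits over the dense family of irrational tori; your perturbation argument in Step 3 works verbatim once ``$p'$ lies on a minimal torus'' is replaced by ``$p'$ is chosen on the collision orbit inside that minimal torus, $\varepsilon$-close to $p$.''
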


\begin{proof}  
To prove this lemma, we express the collision set in  Delaunay coordinates (see Appendix \ref{app:Delaunay}). 
This expression is needed in Section \ref{sec:R1}.
In polar coordinates the collisions are defined (when $\mu=0$) by 
\[
r=1,\quad \varphi=0.
\]
By  \eqref{def:rphiDelaunay}, this is
equivalent to 
\begin{equation}\label{def:CollisionDelaunay}
\begin{split}
L^2(1-e\cos \mathfrak{u})&=1\\
\mathfrak{v}(\ell)+g&=0.
\end{split}
\end{equation}
To have solutions of the first equation, we impose
\begin{equation}\label{def:condLe}
\Big|\frac{L^2-1}{eL^2}\Big|<1,
\end{equation}
which is equivalent to the condition
\begin{equation}\label{cond-3}
\frac{G^2}{1+e}<1<\frac{G^2}{1-e},
\end{equation}
imposed in the definition of $\cV$.
Assuming this condition,
the first equation has  two solutions in $[0,2\pi]$
\[
\mathfrak u^*_+=\arccos{\frac{L^2-1}{eL^2}}\in(0,\pi),\quad \mathfrak u_-^*=2\pi-\arccos{\frac{L^2-1}{eL^2}}\in(\pi,2\pi).
\]
Using $\ell=\mathfrak{u}-e\sin\mathfrak{u}$,  we obtain
$\ell_{\pm,0}^*(L,G)$. Finally,  we can solve the second equation in
\eqref{def:CollisionDelaunay} as
$g_{\pm,0}^*(L,G)=-\mathfrak v(\ell^*_\pm)$ to obtain the collision set as two 
graphs  on the actions $(L,G)$,
\begin{equation}\label{def:CollisionGraph2bp}
\begin{split}
\ell&=\ell^{\pm,0}_\col(L,G)\\
g&=g^{\pm,0}_\col(L,G).
\end{split}
\end{equation}

Recall that $H_0(L,G)$ is completely integrable. For fixed $J\in(-2\sqrt
2,3)$, 
$$
\cV\cap\{-2H_0(L,G)=J\}
$$ 
is foliated by 2-dimensional tori defined by constant $(L,G)$ 
(see Fig. \ref{actionangle}), whose dynamics is a rigid rotation with 
frequency vector $\omega=(\partial_L H_0, -1)$. If
$\partial_L H_0=L^{-3}\in\R\backslash\Q$, the orbit 
\[\left\{\left.\varphi_t\left(\ell^{\pm,0}_\col(L,G),g^{\pm,0}_\col(L,G),L,\frac{J}
{2} -\frac{1}{2L^2}\right)\ \right|\ t\in\R\right\}\] is dense in 
the corresponding torus.  Moreover,
$\partial_L
H_0={1}/{L^3}$ is a  diffeomorphism of $(0,+\infty)$. Thus, for a dense set
$L\in(0,+\infty)$, the frequency vector is non-resonant. These two facts lead to density of collisions lead to the
existence of $\cV$ of which collision solutions are dense.
\end{proof}
\medskip

 Lemma \ref{lemma:2BPdensity} does not only provide the open set $\cV$ but also describes it in terms of the  Delaunay coordinates.  Let us explain the set $\cV$ geometrically. 
We need to avoid the following:
\begin{itemize}
 \item Degenerate ellipses with $e=1$: so we impose $G\neq 0$.
 \item Circles: so we impose $|G|< L$.
 \item Ellipses that do not intersect the orbit of the second primary (the unit 
circle) or are tangent to it. This is given by two conditions. The first one is 
\eqref{cond-3}. The second one is that the semimajor axis $L$ cannot be too 
small. This second condition is equivalent to take $H_0$ in the imposed range of 
energies $-2H_0=J\in(-2\sqrt 2,3)$.\\
\end{itemize}

The proof of Lemma \ref{lemma:2BPdensity} also provides a description of the collision manifold for $H_0$ in $\cV\cap\{-2H_0(L,G)=J\}$. This manifold has two connected components in the energy level defined as 
\[
\cC_J^\pm=\Big\{(\ell,g,L,G)\in \cV\cap\{-2H_0(L,G)=J\}\Big|\ell=\ell_{\pm,0}^*(L,G),g=g_{\pm,0}^*(L,G)\Big\}
\]
It can be easily seen that these manifolds intersect transversally each 
invariant torus  $(L,G)=\mathrm{constant}$ in $\cV\cap\{-2H_0(L,G)=J\}$.

Finally, let us point out that to prove Theorem \ref{main} we cannot work 
with the full set $\cV$ but in open sets whose closure is strictly contained 
in $\cV$. Namely, the closer we are to the boundary of $\cU$, the smaller 
we need to take $\mu$ to prove Theorem \ref{main}. To this end, we define 
the following open sets. Fix $\de>0$ small. 
Recall that eccentricity $e=e(L,G)=\sqrt{1-\frac{G^2}{L^2}}$, see  \eqref{def:eccentricity}.
Then, we define 
\[
 \cV_\de\ \subset\ \overline{\cV_\de}\ \subset\ \cV,
\]
where 
\begin{equation}\label{def:Vdelta}
\beal 
\cV_\de= \left\{(\ell,g,L,G)\right. & \in \cV:  \ \ L\in (\de,\de^{-1}), \\ 
& \left. \ \de<|G|<L- \de,  \ \   \frac{G^2}{1+e(G,L)}+\de<1<\frac{G^2}{1-e(G,L)}-\de
 \right\}.
\enal 
\end{equation}

For $\mu>0$, one can analyze the collision set analogously as done 
in the proof of Lemma \ref{lemma:2BPdensity}. One just 
needs to replace the equations \eqref{def:CollisionDelaunay} by
\begin{equation}\label{def:CollisionDelaunay:mu}
\begin{split}
L^2(1-e\cos \mathfrak{u})&=1-\mu\\
\mathfrak{v}(\ell)+g&=0.
\end{split}
\end{equation}
which have solutions in $\mathcal V_\de$ for $\mu$ small enough and  lead to a 
definition of the collision set as two graphs
\begin{equation}\label{def:CollisionSet:mu}
\begin{split}
\ell&=\ell^{\pm,\mu}_\col(L,G)\\
g&=g^{\pm,\mu}_\col(L,G).
\end{split}
\end{equation}
Moreover, these graphs are non-degenerate in 
$\mathcal V_\de$ as the associated Hessian has positive lower 
bounds (independent of $\mu$).\\ 

\begin{figure}
\begin{center}
\includegraphics[width=8cm]{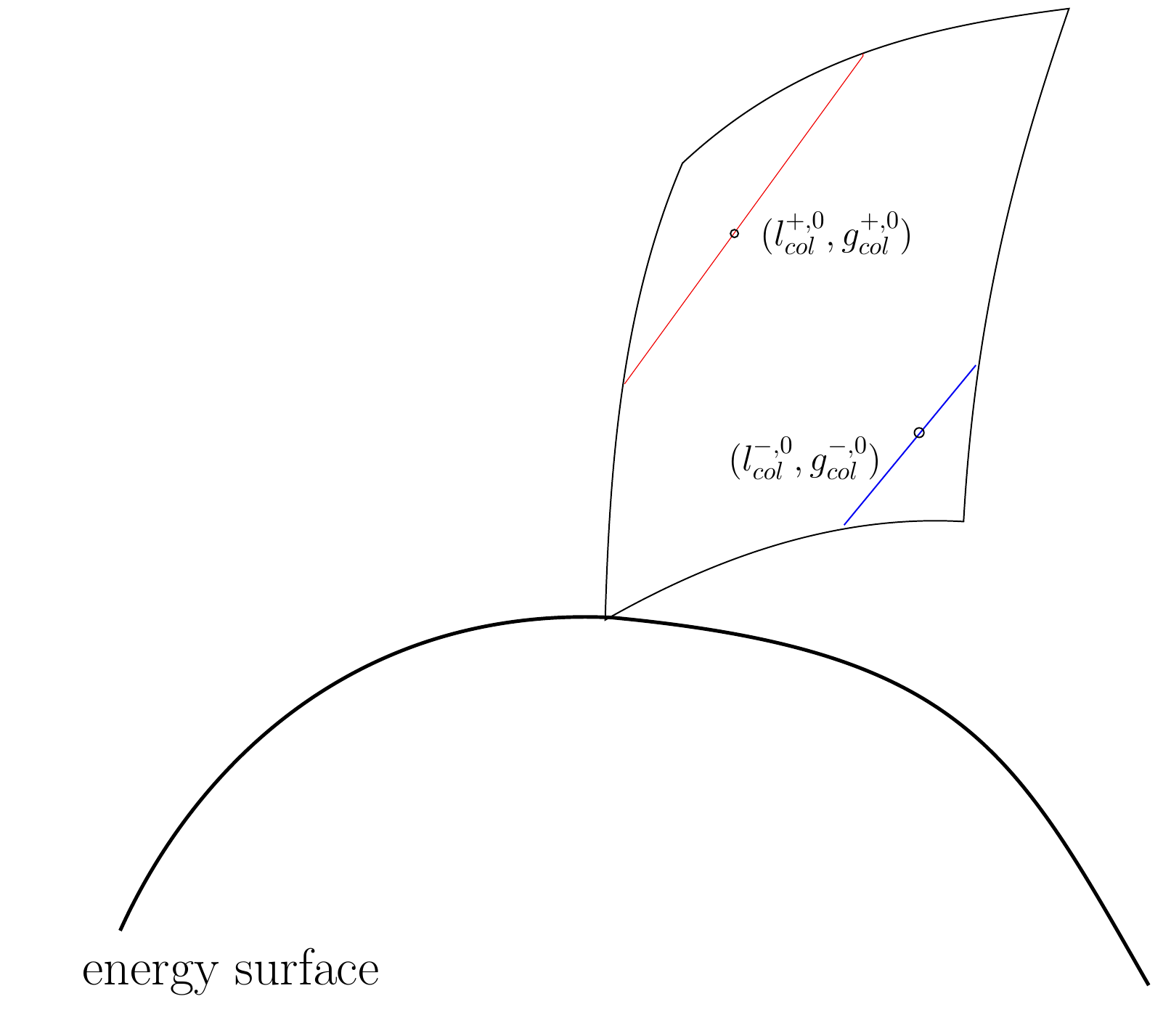}
\caption{For $\mu=0$, the energy surface $\{-2H_0=J\}$ is foliated by punctured tori, where the punctures correspond to collisions.}
\label{actionangle}
\end{center}
\end{figure}

\section{The region $R_1$: dynamics far from collision}\label{sec:R1}

To study the region $R_1$, that is dynamics ``far from collision'',  
we apply KAM Theory. To this end, we modify  the Hamiltonian to avoid 
its blow up when approaching collision.  We modify 
the Hamiltonian in polar coordinates and then we express the modified Hamiltonian in Delaunay variables.

The Hamiltonian \eqref{r3b-e} expressed in polar coordinates \eqref{def:polars} is given by 
\begin{eqnarray}\label{def:HamPolars}
H_\mu(r,\varphi,R,G)&=&\frac{R^2}{2}+\frac{G^2}{2r^2}-G-\frac{\mu}{\sqrt{r^2+(1-\mu)^2-2(1-\mu)r\cos\varphi}}\nonumber\\
& &-\frac{1-\mu}{\sqrt{r^2+\mu^2+2\mu r\cos\varphi}}
\end{eqnarray}
which can be written as 
\[
H_\mu(r,\varphi,R,G)=\frac{R^2}{2}+
\frac{G^2}{2r^2}-G-\frac 1r-\mu
g_1(r,\varphi,\mu)-\mu g_2(r,\varphi,\mu),
\]
where 
\[
 \begin{split}
  g_1(r,\varphi,\mu)=&\frac{1}{\sqrt{r^2+(1-\mu)^2-2(1-\mu)r\cos\varphi}}\\
  g_2(r,\varphi,\mu)=&\mu^{-1}\left(\frac{1}{\sqrt{r^2+\mu^2+2\mu
r\cos\varphi}}-\frac{1}{r}\right).
\end{split}
 \]
The term $g_1$ has a singularity at $\{(r,\varphi)=(1-\mu,0)\}$ and $g_2$ 
is analytic in the domains we are considering (which do not 
contain the position of the other primary). We  modify $g_1$ by
multiplying it by a $C^\infty$ smooth bump function. Consider 
$\Phi:\mathbb R\to \mathbb R$ so that 
\[
 \Phi(z)=\left\{\begin{matrix} 
0 \ \text{ if }\ |z|\leq 1\\ 
1 \ \text{ if }\ |z|\geq
2\end{matrix} \right.
\]
Then, if we fix $\tau>0$, we define
\[
 \widehat{g}_1(r,\varphi,\mu)=
\Phi\left(\mu^{-\tau}\sqrt{
(r\cos\varphi-1+\mu)^2+r^2\sin^2\varphi } \right)
\left( { g } _1(r , \varphi ,
\mu)-4\mu^{-\tau}\right)+4\mu^{-\tau}.
\]
with
\[
\widehat{g}_1(r,\varphi,\mu)=\left\{
\begin{split}
g_1(r,\varphi,\mu),&\quad\text{for }|(r\cos\varphi-1-\mu,r\sin\varphi)|\geq 2\mu^\tau\\
4\mu^{-\tau},&\quad\text{for }|(r\cos\varphi-1-\mu,r\sin\varphi)|\leq \mu^\tau.\\
\end{split}
\right.
\]
Later, in Section \ref{sec:DensityDelaunay}, we show that the optimal choice for $\tau$ is $\tau=3/20$. \\

Notice that $\|\widehat g_1\|_{C^r}\lesssim\mu^{-(r+1)\tau}$ for sufficiently small
$\mu\ll1$, and $\|g_2\|_{C^r}\lesssim 1$. In this section, we consider  the
modified Hamiltonian 
\begin{equation}\label{modi-h}
\widehat{H}_\mu(r,\varphi,R,G)=\frac{R^2}{2}+\frac{G^2}{2r^2}
-G-\frac 1r-\mu \widehat{g}_1(r,\varphi,\mu)+
\mu g_2(r,\varphi,\mu),
\end{equation}
and we express it in Delaunay coordinates by considering the transformation 
$\Psi_2(r,\varphi,R,G)=(\ell,g,L,G)$ introduced in \eqref{trans}. This change 
leads to an iso-energetic non-degenerate nearly integrable Hamiltonian
\begin{equation}\label{i-i-h}
\widehat H_\mu(\ell,g,L,G)=-\frac{1}{2L^2}-G+\mu\widehat{f}_1(\ell,g,L,G,\mu)-\mu
f_2(\ell,g,L,G,\mu).
\end{equation}
Fix $\de>0$. Then, in the set $\cV_\de$ defined in \eqref{def:Vdelta}, the functions $f_1$ and $f_2$ satisfy
\[
\|\widehat f_1\|_{C^r}\leq C\mu^{-(r+1)\tau},\quad \|f_2\|_{C^r}\leq C
\]
for some constant $C$ which depends on $\de$ but is independent of $\mu$.

In  polar coordinates, there are two disjoint subsets
\begin{equation}\label{def:PuncturePolars}
\cD_{\pol}^\pm:=\Big\{(r,\varphi,R,G)\subset\Psi_2(\cV_\de)\ \Big|\ \left|
 (r\cos\varphi-1+\mu,r\sin\varphi)\right|\leq \mu^\tau\Big\},
\end{equation}
at each of the considered energy levels where 
the Hamiltonian $H_\mu$ in \eqref{def:HamPolars} is 
different from the modified $\widehat H_\mu$ in 
\eqref{modi-h}. They correspond to two disjoint 
intersections (see Fig. \ref{i-0}). Here the sign $\pm$ 
depends on the sign of the variable $R$.

The main result of this section is the following Proposition, 
where we take 
\[
 \tau=\frac{3}{20}.
\]
Note that we abuse notation and we refer to $\cV_\de$ independently of the coordinates we are using.
\begin{prop}\label{prop:r1-region-main}
Fix $\de>0$ and  $\varpi>0$ small. Then there exists  
$\mu_0>0$ depending on $\de$ and $\varpi$, such that 
the following holds for any $\mu\in (0,\mu_0)$:

For any ${\mathbb  X}\in\cV_\de$, there exists a $C^1$ curve 
$\mathfrak S\in\cV_\de$  of  length $O(\mu^\frac{3}{20})$ satisfying  
\[\mathrm{dist}(\mathfrak S, \mathbb X)\leq O(\mu^\frac{1}{20})\]
and a continuous function  $T_0:\mathfrak S \to\mathbb{R}^+$
such that 
\[
\cS_0=\left\{\phi_{H_\mu}(T_0(z),z):z\in\mathfrak S\right\}
\]
satisfies either  $\cS_0\subset \partial \cD_{\pol}^+$ or $\cS_0\subset 
\partial \cD_{\pol}^-$, where $\phi_{H_\mu}$ is the flow associated to the Hamiltonian $H_\mu$.

Moreover, 
\begin{enumerate}
\item  There exists a $C^1$ function $V$  satisfying
such that $\cS_0$ is a graph over $u$ as
\[\cS_{0}=\left\{(u,V(u))\ \left|\ \ \ u=\mu^{\frac{3}{20}} e^{is}\cdot 
\frac{v_{0}}{|v_{0}|}, \quad s\in\left[\frac\pi 
2+\varpi,\frac{3\pi}{2}-\varpi\right]\right.\right\}.\]
Moreover, there exists $v_0\in\mathbb R^2$ satisfying $|v_0|\geq C$ for certain $C>0$ 
independent of $\mu$ such that
\[\max\, |V(u)-v_{0}|\leq  O(\mu^{1/20}).\]
\item For all $z\in \mathfrak S$ and $t\in (0, T_0(z))$, $\phi_{\widehat H_\mu}(t,z)\not\in \cD_{\pol}^+\cup  \cD_{\pol}^-$  and, therefore, 
\[\phi_{\widehat H_\mu}(t,z)=\phi_{H_\mu}(t,z),\quad \forall z\in \mathfrak S \text{ and }t\in(0,T_0(z)).\]
\end{enumerate}
\end{prop}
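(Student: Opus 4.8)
The plan is to take the heuristic literally: after the singularity has been excised, $\widehat H_\mu$ in \eqref{i-i-h} is a genuine nearly integrable Hamiltonian, so KAM theory fills its energy surface with a $\mu^{1/20}$-dense family of Diophantine tori; I then pick such a torus near $\mathbb X$, choose a short transversal segment $\mathfrak S$ on it, and flow $\mathfrak S$ forward until it \emph{first} meets the collision boundary $\partial\cD^\pm_{\pol}=\{|u|=\mu^{3/20}\}$. I would organize the argument as (i) a KAM step for $\widehat H_\mu$, (ii) selection of a torus $\cT_w$ within $O(\mu^{1/20})$ of $\mathbb X$, and (iii) a transversal first-hitting analysis producing the graph $\cS_0$ and the velocity estimate.

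For the KAM step I would first record the isoenergetic nondegeneracy of the integrable part $-\tfrac{1}{2L^2}-G$ on $\cV_\de$: its frequency map is $\omega(L)=(L^{-3},-1)$, and since $L\mapsto L^{-3}$ is a diffeomorphism the frequency ratio $-L^{-3}$ is strictly monotone, with twist constants depending only on $\de$. The genuine obstacle is that the perturbation is \emph{not} uniformly small: the bound $\|\widehat f_1\|_{C^r}\le C\mu^{-(r+1)\tau}$ means its effective size in $C^r$ is only $\eps_r:=\mu^{\,1-(r+1)\tau}$, which blows up as $r$ increases. Hence an analytic KAM theorem is unavailable and I would instead invoke a finite-smoothness (Hölder) KAM theorem for the isoenergetically reduced area-preserving twist map, whose invariant-circle threshold is $C^{3+}$. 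The balance is to keep $\eps_r$ small while staying above threshold: with $\tau=\tfrac{3}{20}$ and $r=5$ one has $\eps_5=\mu^{1/10}\to0$, the required $C^5$ regularity is comfortably available from the stated norms, and the standard estimate that the largest gap between surviving tori (equivalently, the distance from an arbitrary point of the energy surface to the nearest Diophantine torus) is $O(\sqrt{\eps_r})=O(\mu^{1/20})$ yields precisely the asserted density. This is where the exponent $\tfrac1{20}$ is produced; re-optimizing the smoothness order down toward the KAM threshold accounts for the improved $\tfrac{1}{17+\nu}$ of Remark \ref{rmk:MainBetterDensity}.

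Given $\mathbb X\in\cV_\de$ I would then pick a KAM torus $\cT_w$ with $\mathrm{dist}(\cT_w,\mathbb X)=O(\mu^{1/20})$, on which $\phi_{\widehat H_\mu}$ is conjugate to a Diophantine rotation. For the first-hitting analysis I would use that for $\mu=0$ the collision manifold $\cC_J^\pm$ meets every torus $\{L,G=\mathrm{const}\}$ transversally (noted after Lemma \ref{lemma:2BPdensity}) and that on such a torus the orbit crosses the collision configuration with a definite velocity $v_0$, $|v_0|\ge C$, since in $\cV_\de$ the ellipse meets the unit circle transversally away from tangency. As $\cT_w$ and its flow are $C^1$-$O(\mu^{1/20})$-close to the $\mu=0$ data and $\partial\cD^\pm_{\pol}$ is a shrinking circle around the collision point, the forward orbit of each point of a short transversal segment $\mathfrak S\subset\cT_w$ of length $O(\mu^{3/20})$ near $\mathbb X$ enters this circle transversally. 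The first-entry time $T_0(z)$ is then $C^1$ by the implicit function theorem, and $\cS_0=\{\phi_{H_\mu}(T_0(z),z):z\in\mathfrak S\}$ is a $C^1$ graph over the upstream arc $\{u=\mu^{3/20}e^{is}\,v_0/|v_0|:\ s\in[\tfrac\pi2+\varpi,\tfrac{3\pi}2-\varpi]\}$, the $\varpi$-margins excising the grazing directions $u\perp v_0$. Because $T_0$ is the \emph{first} entry time, $\phi_{\widehat H_\mu}(t,z)\notin\cD^\pm_{\pol}$ for $t\in(0,T_0(z))$, where $\widehat H_\mu\equiv H_\mu$, giving conclusion (2); and the velocity along $\cS_0$ is that with which the nearly unperturbed orbit reaches the $\mu^{3/20}$-circle, so comparison with the exact $\mu=0$ crossing velocity (using the $O(\mu^{1/20})$ closeness of torus and flow and the $O(\mu^{3/20})$ diameter of $\mathfrak S$) gives $\max|V(u)-v_0|=O(\mu^{1/20})$ with $|v_0|\ge C$, which is conclusion (1).

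The decisive difficulty is the KAM step with a perturbation that is small only in low-order norms: it forces a quantitative finite-smoothness KAM theorem with explicit control of the density of surviving tori, together with the joint choice $\tau=\tfrac3{20}$, $r=5$ that simultaneously keeps $\eps_r$ small and the regularity above the invariant-circle threshold. Once a Diophantine torus near $\mathbb X$ is secured, the transversality, the graph structure, the coincidence of the two flows, and the velocity estimate are comparatively routine consequences of the $\mu=0$ collision geometry and the closeness of $\cT_w$ to the unperturbed torus.
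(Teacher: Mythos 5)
Your KAM step is essentially the paper's (the paper reduces $\widehat H_\mu$ to an isoenergetic Poincar\'e map and applies Herman's $C^4$ twist theorem, getting tori that are $O(\gamma)$-dense with $\gamma\sim\sqrt{\eps}$, $\eps=\mu^{1-6\tau}$; your $C^5$ variant gives the same numbers). The genuine gap is in your step (iii). You claim that an \emph{arbitrary} short transversal segment $\mathfrak S\subset\cT_w$ near $\mathbb X$, flowed forward to its first hitting of $\partial\cD^{\pm}_{\pol}$, yields a $C^1$ graph $\cS_0$ covering the whole incoming arc $s\in[\tfrac\pi2+\varpi,\tfrac{3\pi}2-\varpi]$ with continuous $T_0$. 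This fails: in the coordinates where the torus dynamics is a rigid rotation, the forward orbit of $\mathfrak S$ sweeps a strip of width $\sim\mu^{3/20}$, while $\cD^\pm$ has diameter $\sim\mu^{3/20}$ as well; generically the strip only clips the ball, so only part of $\mathfrak S$ enters on that pass and the remaining points hit the ball only on much later passes. The resulting $\cS_0$ is disconnected, $T_0$ is discontinuous, and item (1) fails. To get the covering property one must choose $\mathfrak S$ inside the \emph{backward}-flow image of the incoming arc of $\partial\cD^\pm$, and the whole difficulty of the proposition is to show that such segments exist within $O(\mu^{1/20})$ of \emph{every} $\mathbb X\in\cV_\de$. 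That is a quantitative equidistribution statement for the Diophantine rotation, which the paper proves via the non-homogeneous Dirichlet theorem (Theorem \ref{thm:Cassels}): the backward orbit of the collision ball can be followed for at most $q^*\gtrsim\gamma\mu^{-\tau}$ returns before re-entering a collision neighborhood, and Cassels gives $\gamma$-density of $\{q\omega\}$ only for $|q|\leq\gamma^{-2}$, so one needs $\gamma^{-2}\lesssim\gamma\mu^{-\tau}$, i.e. $\gamma\lesssim\mu^{\tau/3}$, in tension with the KAM requirement $\gamma\gtrsim\mu^{(1-6\tau)/2}$. The balance of these two constraints is what forces $\tau=\tfrac3{20}$ and $\gamma\sim\mu^{1/20}$. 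Your argument contains only the KAM constraint; under your logic nothing prevents taking $\tau$ smaller, which would ``prove'' density $\mu^{(1-6\tau)/2}\to\mu^{1/2}$ — a sign that a constraint is missing, and indeed your choice $\tau=\tfrac3{20}$, $r=5$ is never forced by anything you wrote.

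A second, related omission: each torus carries \emph{two} collision neighborhoods, and the flows $\phi_{H_\mu}$ and $\phi_{\widehat H_\mu}$ differ inside both. The segment's forward $\widehat H_\mu$-orbit must avoid \emph{both} neighborhoods up to the hitting time, otherwise the true orbit leaves the torus and the construction collapses. Defining $T_0$ as the first entry into the union formally salvages item (2), but then different points of $\mathfrak S$ may first hit different neighborhoods, destroying both the dichotomy $\cS_0\subset\partial\cD^+_{\pol}$ or $\cS_0\subset\partial\cD^-_{\pol}$ and the arc-covering in item (1). The paper devotes a separate argument (the ``two collisions in each torus'' step) showing, again via the Diophantine condition, that for at least one of the two collisions the backward orbit of length $q^*$ stays clear of the other collision's neighborhood; your proposal does not address this at all.
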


This proposition implies that any point in $\cV_\de$ has a curve 
$\mathfrak S$ in its $O(\mu^\frac{1}{20})$ neighborhood that hits 
``in a good way'' a $O(\mu^\frac{3}{20})$ 
of the collision. To prove 
Theorem \ref{main}, it only remains to prove that the image curve $\cS_0$ 
posesses a point whose orbit leads to collision. We prove this fact in 
two steps in Sections \ref{sec:transition} and \ref{sec:LeviCivita}.

The rest of this section is devoted to prove Proposition \ref{prop:r1-region-main}. 

\begin{proof}
The proof has several steps. We first analyze the dynamics in 
the region $R_1$ in Delaunay coordinates, then translate into 
the Cartesian coordinates $(u,v)$.

\subsection{Application of the KAM Theorem}

 First step is to apply KAM Theorem to get invariant tori for the Hamiltonian 
$\wh H_\mu$. We are not aware of any KAM Theorem in the literature dealing 
with $C^{\infty}$ iso-energetically non-degenerate Hamiltonian systems. 
To overcome this problem, we reduce $\wh H_\mu$ to a two dimensional 
Poincar\'e map and use Herman's KAM Theorem \cite{H}.

\begin{lem}\label{lemma:PoincareMap}
Fix $r\geq 3$ and $\tau>0$ such that $1-(r+2)\tau>0$. Consider the Hamiltonian \eqref{i-i-h} and fix an energy level
$\{\widehat{H}_\mu=h\},\ h \in (-3/2,\sqrt{2})$. Then, for $\mu$ small enough, the flow
associated to \eqref{i-i-h} restricted to the level of energy induces a two
dimensional exact symplectic  Poincar\'e map
$\mathcal P_{h,g_0}:\{g=g_0\}\to\{g=g_0\}$, $\mathcal P_{h,g_0}=\mathcal
P_{h,g_0}(\ell, L)$.
Moreover, $\mathcal P_{h,g_0}$ is of the form 
\[
 \mathcal P_{h,g_0}:\begin{pmatrix}\ell\\L\end{pmatrix}\to
\begin{pmatrix}\ell-2\pi \omega
(L)\\L\end{pmatrix}+F\begin{pmatrix}\ell\\L\end{pmatrix}
\]
where
\[
 \omega(L)=\frac{1}{L^3}
\]
and  $F$ depends on both
$h$ and $g_0$ and satisfies
\[
\|F\|_{C^r}\leq
C\mu^{1-(r+2)\tau}
\]
for some $C>0$ independent of $\mu$.
\end{lem}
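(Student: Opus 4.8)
The plan is to treat the angle $g$ as a new time variable and carry out the classical Jacobi--Whittaker reduction of the energy level $\{\wh H_\mu=h\}$, so that the return map to $\{g=g_0\}$ becomes the time-$2\pi$ (in $g$) map of a reduced Hamiltonian system in the conjugate pair $(\ell,L)$. First I would verify that $\{g=g_0\}$ is a genuine global cross-section inside $\cV_\de$. From \eqref{i-i-h},
\[
\dot g = \partial_G \wh H_\mu = -1 + \mu\left(\partial_G \wh f_1 - \partial_G f_2\right).
\]
Since $\|\wh f_1\|_{C^1}\leq C\mu^{-2\tau}$ and $\|f_2\|_{C^1}\leq C$, the perturbation is $O(\mu^{1-2\tau})$, which tends to $0$ because $1-(r+2)\tau>0$ forces $\tau<1/(r+2)\leq 1/5$. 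Hence $\dot g<0$ uniformly for small $\mu$, so $g$ is strictly monotone along orbits, every orbit crosses $\{g=g_0\}$ transversally, and returns after $g$ runs through $2\pi$ (recall $g\in\T$).

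Next, on the energy surface I would solve for $G$: because $\partial_G\wh H_\mu=-1+O(\mu^{1-2\tau})$ is bounded away from $0$, the implicit function theorem gives a smooth $G=\mathcal G(\ell,L,g;h,\mu)$ with $\mathcal G = -h-\tfrac{1}{2L^2}+O(\mu^{1-\tau})$. Setting $K:=-\mathcal G = h+\tfrac{1}{2L^2}-\mu(\cdots)$ and taking $g$ as the independent variable, the Whittaker reduction produces the ($2\pi$-periodic in $g$) system
\[
\frac{d\ell}{dg}=\frac{\partial K}{\partial L},\qquad \frac{dL}{dg}=-\frac{\partial K}{\partial \ell}.
\]
The return map $\mathcal P_{h,g_0}$ is its time-$2\pi$ flow, hence exact symplectic (as the return map of an autonomous Hamiltonian flow to a cross-section of a fixed energy level, equivalently the $2\pi$-map of a Hamiltonian isotopy on the annulus $\T\times(\de,\de^{-1})$); this exactness is what Herman's theorem \cite{H} requires. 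At $\mu=0$, $K_0=h+\tfrac{1}{2L^2}$ gives $\tfrac{d\ell}{dg}=-L^{-3}=-\omega(L)$ and $\tfrac{dL}{dg}=0$, i.e.\ exactly the twist $(\ell,L)\mapsto(\ell-2\pi\omega(L),L)$.

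It then remains to control $F$, the deviation of $\mathcal P_{h,g_0}$ from this twist, in $C^r$. The perturbation of $K$ is $\mu$ times the $\wh f_i$ composed with $G=\mathcal G$, and the IFT solution inherits $\|\mathcal G-\mathcal G_0\|_{C^{r+1}}\lesssim \mu\|\wh f_1\|_{C^{r+1}}\lesssim\mu^{1-(r+2)\tau}$. The reduced vector field loses one derivative relative to $K$, so its perturbation has $C^r$ norm $\lesssim\mu\|\wh f_1\|_{C^{r+1}}\lesssim\mu^{1-(r+2)\tau}$; this single derivative loss is precisely what turns the exponent $r+1$ in $\|\wh f_1\|_{C^r}\lesssim\mu^{-(r+1)\tau}$ into $r+2$. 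Comparing the time-$2\pi$ flows of the perturbed and unperturbed vector fields via Gronwall estimates on the variational equations up to order $r$ (over bounded $g$-time $2\pi$, with $\omega(L)$ and its derivatives uniformly bounded on $\cV_\de$) then yields $\|F\|_{C^r}\leq C\mu^{1-(r+2)\tau}$.

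The main obstacle is this uniform derivative bookkeeping: one must propagate the large, $\mu$-dependent $C^r$ norms of $\wh f_1$ through the implicit function theorem, through the passage from the reduced Hamiltonian to its vector field (the crucial extra derivative loss), and through the flow comparison, all uniformly over $\cV_\de$, over $h\in(-3/2,\sqrt2)$, and over $g_0\in\T$. The hypothesis $1-(r+2)\tau>0$ is exactly the condition that makes the resulting bound $\mu^{1-(r+2)\tau}$ a genuinely small remainder, so that $\mathcal P_{h,g_0}$ is a small $C^r$ perturbation of the nondegenerate twist $\ell\mapsto\ell-2\pi\omega(L)$, as needed to apply Herman's KAM theorem in the next step.
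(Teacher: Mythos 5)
Your proposal is correct: the paper states Lemma \ref{lemma:PoincareMap} without proof, treating it as the standard isoenergetic (Whittaker) reduction, and your argument --- uniform monotonicity of $g$, solving for $G$ on the energy level by the implicit function theorem, passing to the non-autonomous reduced Hamiltonian $K(\ell,L,g)$ with $g$ as time, and the one-derivative loss that converts $\|\wh f_1\|_{C^{r+1}}\lesssim \mu^{-(r+2)\tau}$ into the stated $C^r$ bound on $F$ via Gronwall estimates over the bounded return time --- is precisely the argument the lemma presupposes, with the exponent bookkeeping coming out right. The only point worth noting is orientation: since $\dot g = -1+O(\mu^{1-2\tau})<0$, your $g$-time $+2\pi$ map is the backward-time return map, which is exactly what produces the sign in $\ell\mapsto\ell-2\pi\omega(L)$ appearing in the statement; this choice is immaterial for the subsequent application of Herman's theorem, since only the twist $|\omega'(L)|=3/L^4$ and the Diophantine property of the rotation number matter.
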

We apply KAM Theory to the Poincar\'e map $\mathcal P_{h,g_0}$. 
Recall that a real number $\omega$ is called 
a  {\it constant type  Diophantine number} if there
exists a constant $\gamma > 0$ such that 
\begin{equation}\label{eq:dioph-gamma}
\left|\omega-\frac{p}{q}\right| \geq \frac{\gamma}{q^{2}}\qquad\text{
for all }p \in \mathbb Z,\ \  q \in \mathbb N.
\end{equation}
We denote  by  $B_\gamma$ the set of such numbers for a fixed $\gamma>0$. The 
set $B_\gamma$ has measure zero. Nevertheless, it has the following property.
\begin{lem}\label{lemma:densityconstantnumbers}
Fix $\gamma\ll1 $. Then, the set $B_\gamma$ is 
$\gamma$-dense in $\R$ 
\end{lem}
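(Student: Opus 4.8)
The goal is to show that the set $B_\gamma$ of constant-type Diophantine numbers with constant $\gamma$ is $\gamma$-dense in $\R$, meaning every interval of length $\gamma$ contains a point of $B_\gamma$.

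The plan is to exploit the arithmetic of continued fractions. Recall that a real number $\omega$ is of constant type (that is, satisfies \eqref{eq:dioph-gamma} for some $\gamma>0$) if and only if the partial quotients in its continued fraction expansion are uniformly bounded. The prototypical example is $\omega$ with all partial quotients equal to $1$, namely $\omega=[1;1,1,\dots]=(\sqrt 5-1)/2$, the golden mean, which has the largest constant $\gamma$ among all irrationals. More generally, numbers whose continued fraction digits are bounded by some $N$ lie in $B_\gamma$ for a $\gamma$ comparable to $1/N$. So the strategy is: given a target interval, produce a number whose continued fraction has controlled (small) digits while being forced to lie in that interval.

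Concretely, I would argue as follows. Fix $\gamma\ll 1$ and an arbitrary $\omega_0\in\R$; I want a point of $B_\gamma$ within distance $\gamma$ of $\omega_0$. By translating by an integer (which preserves membership in $B_\gamma$ since \eqref{eq:dioph-gamma} is invariant under $\omega\mapsto\omega+1$) I may reduce to $\omega_0\in[0,1)$. Now take a rational approximation $p/q$ of $\omega_0$ with $q$ of size roughly $1/\gamma$, so that $|\omega_0-p/q|\lesssim\gamma$. The idea is then to perturb $p/q$ into an irrational of constant type lying even closer: I would set $\omega=p/q + \epsilon\,\theta$ where $\theta$ is a fixed badly approximable number (e.g. the golden mean) and $\epsilon$ is a small rational chosen so that $\omega$ stays within $\gamma$ of $\omega_0$. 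Equivalently, and more cleanly, one appends a long tail of small partial quotients: given the finite continued fraction $[a_0;a_1,\dots,a_n]=p/q$, define $\omega=[a_0;a_1,\dots,a_n,1,1,1,\dots]$ by replacing the truncation with an infinite string of $1$'s. Since the convergents of $\omega$ agree with those of $p/q$ up to level $n$, we get $|\omega-p/q|\le 1/q^2\lesssim\gamma^2$, hence $|\omega-\omega_0|\lesssim\gamma$; and since all partial quotients of $\omega$ from some point on are $1$ (and the finitely many initial ones are bounded in terms of $q\lesssim 1/\gamma$), a direct estimate on the continued-fraction denominators shows $\omega$ satisfies \eqref{eq:dioph-gamma} with the desired constant of order $\gamma$.

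The main obstacle is bookkeeping the constants: one must verify that the finitely many large partial quotients $a_1,\dots,a_n$ coming from the approximation $p/q$ do not spoil the Diophantine constant below the required threshold $\gamma$. The clean way around this is to choose $q$ not merely close to $1/\gamma$ but as the denominator of a convergent of a deliberately well-chosen badly approximable number near $\omega_0$; then all partial quotients are bounded a priori and the estimate $|\text{convergent}-\omega|\asymp 1/q^2$ together with $q\gtrsim 1/\gamma$ gives both the proximity and the constant-type bound simultaneously. I would therefore phrase the argument directly via the three-distance/best-approximation properties of continued fractions rather than perturbing an arbitrary rational, so that the constant $\gamma$ in \eqref{eq:dioph-gamma} is controlled throughout and the $\gamma$-density follows at once.
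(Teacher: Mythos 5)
Your strategy lives in the same circle of ideas as the paper's proof (both reduce to the set $\mathcal C_K$ of continued fractions with all entries bounded by $K\sim 1/\gamma$), but where the paper argues structurally --- it orders the gaps of the Cantor set $\mathcal C_K$, identifies the widest gap as $\bigl([2,K,1,K,\dots],\,[1,1,K,1,K,\dots]\bigr)$, and bounds its width by $O(1/K)$, so that every interval of length $\gtrsim \gamma$ must meet $\mathcal C_K\subset B_\gamma$ --- you attempt to construct a nearby constant-type number directly, and the construction as written has a genuine gap.

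The gap is the inequality $|\omega-p/q|\le 1/q^2\lesssim\gamma^2$, which silently assumes $q\gtrsim 1/\gamma$. Dirichlet (or any best-approximation argument) gives a rational with $q\le 1/\gamma$ and $|\omega_0-p/q|\le\gamma$, but no \emph{lower} bound on $q$: when $\omega_0$ equals, or is very close to, a rational of small denominator, the approximant in lowest terms has small $q$, and then replacing the truncation by the tail $[1,1,1,\dots]$ displaces the number by roughly $1/(q_nq_{n+1})\ge c/q^2$, which is enormous compared to $\gamma$. Concretely, for $\omega_0=1/2$ your recipe outputs $[2,1,1,1,\dots]\approx 0.382$, at distance $\approx 0.118$ from $\omega_0$ no matter how small $\gamma$ is. The repair is to insert one partial quotient of size $K\sim 1/\gamma$ before the tail of ones: for $\omega_0=1/2$ take $[2,K,1,1,\dots]$, which lies within $O(\gamma)$ of $1/2$ and has all entries bounded by $K$; in general, truncate the expansion of $\omega_0$ at the first index $j$ where either $q_j\ge 1/\gamma$ or the next entry $a_{j+1}$ exceeds $K$, and append $[1,1,\dots]$ in the first case and $[K,1,1,\dots]$ in the second. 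Your own proposed fix --- taking $q$ to be a convergent denominator of a ``well-chosen badly approximable number near $\omega_0$'' --- is circular: exhibiting a badly approximable number within $\gamma$ of an arbitrary $\omega_0$ with Diophantine constant of order $\gamma$ is exactly what the lemma asserts. (By contrast, the worry you do flag, that the initial entries $a_1,\dots,a_n$ might spoil the constant, is harmless: each $a_i\le q_i\le q\le 1/\gamma$, so they degrade the constant only by a bounded factor.)
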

We prove this lemma in  Appendix \ref{app:Diophantine}.

Then we can apply the following KAM theorem.
\begin{thm}[M. Herman \cite{H}, Volume 1, Section 5.4 and 5.5]\label{kam}
Consider a $C^r$, $r\geq 4$, area preserving twist map 
\[
f_\eps:[0,1]\times\T\rightarrow [0,1]\times\T\quad \text{of the form}\quad 
f_\eps=f_0+\eps f_1,
\] 
where 
\[
f_0(\theta,I)=(\theta+A(I),I)
\]
and $M_0^{-1}\geq A'(I)\geq M_0>0$ for all $I\in\R$. Assume  $\|f_1\|_{C^r}\lesssim 1$. 
Then, if $\eps^{1/2}M_0^{-1}$ is small enough,
for each $\omega$ from the set of constant type Diophantine numbers with $\gamma\sim \eps^{1/2}$, the map $f_\eps$ posesses an invariant torus $\mathcal T_\om$ which is a graph of $C^{r-3}$ 
functions $U_\omega$ and the motion on $\mathcal T_\om$ is 
$C^{r-3}$ conjugated to a rotation by $\omega$ with
$\|U_\omega\|_{C^{r-3}}\lesssim\eps^{1/2}$. These tori cover 
the whole annulus $O(\eps^{1/2})$-densely.
\end{thm}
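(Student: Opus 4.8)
The plan is to prove Theorem \ref{kam} as a twist-map KAM statement via a Moser-type Newton iteration with smoothing, adapted to the finite regularity class $C^r$. First I would reformulate the existence of $\cT_\om$ as a conjugacy problem: for each admissible frequency $\om$ seek an embedding $h_\om:\T\to[0,1]\times\T$ of the form $h_\om(\theta)=(\theta+u(\theta),\,I_\om+U(\theta))$, with $I_\om=A^{-1}(\om)$, solving the functional equation
\[
\mathcal F(h_\om):=f_\eps\circ h_\om-h_\om\circ R_\om=0,\qquad R_\om(\theta)=\theta+\om.
\]
A solution makes $h_\om(\T)$ an $f_\eps$-invariant curve on which the dynamics is $C^{r-3}$-conjugate to the rigid rotation $R_\om$, and reading off the action component recovers the graph $U_\om$. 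Since $f_0$ already solves $\mathcal F=0$ with $u=U=0$, the initial error is $\mathcal F(h_\om^{(0)})=O(\eps)$ in $C^r$.

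Next I would linearize. Using exactness (area preservation) to remove secular terms in the angle component and the twist $A'(I)\in[M_0,M_0^{-1}]$ to control the action component, the derivative $D\mathcal F$ at an approximate solution reduces to a pair of cohomological equations
\[
\psi(\theta+\om)-\psi(\theta)=\phi(\theta)-\widehat\phi_0 ,
\]
whose solvability requires matching the average $\widehat\phi_0$; this is exactly where the twist is used, since adjusting the radial component shifts the rotation number at a nondegenerate rate $\geq M_0$ and absorbs the obstruction. Solving in Fourier modes divides the $k$-th coefficient by $e^{2\pi i k\om}-1$, and because $\om$ is of constant type with constant $\gm$, the bound $|e^{2\pi i k\om}-1|^{-1}\lesssim |k|/\gm$ gives an estimate with a loss of one derivative and a factor $\gm^{-1}$. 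In particular the first correction has size $O(\eps/\gm)$, which with $\gm\sim\eps^{1/2}$ is $O(\eps^{1/2})$; since the later corrections are quadratically smaller, this is the origin of the bound $\|U_\om\|_{C^{r-3}}\lesssim\eps^{1/2}$.

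I would then run the quadratic scheme with Fourier-truncation smoothing operators $T_N$ to pay for the derivative loss. With error sizes $\eps_n$ and truncations $N_n=N_0^{(3/2)^n}$, a single Newton step yields an estimate of the schematic form $\eps_{n+1}\lesssim \gm^{-a}N_n^{\,b}\eps_n^2+N_n^{-(r-c)}\eps_n$, the first term being the quadratic Newton gain (with the lost derivatives charged to $N_n$) and the second the smoothing tail. Making $\eps_n\to0$ superexponentially forces the relative condition $\eps\lesssim\gm^2$; taking $\gm\sim\eps^{1/2}$, the largest constant compatible with this and the one that maximizes density, together with the absolute smallness of $\eps^{1/2}M_0^{-1}$ from the hypothesis, I would sum the corrections and, via Hadamard interpolation for the intermediate $C^s$ norms, show the embeddings converge in $C^{r-3}$. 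This produces the invariant graph $U_\om$ with the claimed regularity and size. (Herman's own argument in \cite{H} proceeds through a conjugation-to-rotations framework rather than this Nash--Moser route, but both reach the same conclusion.)

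Finally, for the density statement I would transport density of frequencies to density of tori through the twist. By Lemma \ref{lemma:densityconstantnumbers} the admissible frequencies, constant type with constant $\gm\sim\eps^{1/2}$, are $\gm$-dense in $\R$; and since $A$ is a bi-Lipschitz diffeomorphism with $A'\in[M_0,M_0^{-1}]$, the base points $I_\om=A^{-1}(\om)$ are $O(\gm/M_0)=O(\eps^{1/2})$-dense in the annulus. Combined with the $O(\eps^{1/2})$ closeness of each $\cT_\om$ to $\{I=I_\om\}$, this gives the claimed $O(\eps^{1/2})$-dense covering. The main obstacle is the analytic core of the third paragraph: forcing the Newton iteration to converge in finite differentiability while losing only three derivatives, and tracking the dependence on $\gm$ precisely enough that the density-optimal choice $\gm\sim\eps^{1/2}$ remains compatible with the convergence threshold $\eps\lesssim\gm^2$. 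The careful bookkeeping of the norms, the smoothing scales $N_n$, and the small-divisor factors $\gm^{-1}$ propagating through the iteration is the delicate part.
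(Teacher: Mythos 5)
You should first note that the paper itself offers no proof of Theorem \ref{kam}: it is imported verbatim from Herman \cite{H} (Volume 1, Sections 5.4 and 5.5) and used as a black box, so the only meaningful comparison is with Herman's argument, which, as you yourself remark, is not a Nash--Moser iteration on the conjugacy equation but goes through Herman's global theory of conjugation of circle diffeomorphisms to rotations. Your general roadmap --- the conjugacy equation $f_\eps\circ h_\om=h_\om\circ R_\om$, cohomological equations with small divisors $|e^{2\pi ik\om}-1|^{-1}\lesssim|k|/\gm$, exactness to kill the secular average, twist to absorb the frequency obstruction, Newton iteration with Fourier-truncation smoothing --- is the standard one, and your final paragraph (density of constant type frequencies via Lemma \ref{lemma:densityconstantnumbers}, transported by the bi-Lipschitz frequency map $A$ together with the $O(\eps^{1/2})$ closeness of each torus to a circle) is an essentially complete and correct proof of the density assertion, granted the existence statement.

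The genuine gap is the analytic core you explicitly defer. The entire content of the theorem as stated is that it holds at regularity $r\geq 4$, with a loss of only three derivatives, and with constants uniform over the shrinking family of frequencies $\gm\sim\eps^{1/2}$. The schematic iteration inequality you write, $\eps_{n+1}\lesssim\gm^{-a}N_n^{\,b}\eps_n^2+N_n^{-(r-c)}\eps_n$, closes only when $r$ exceeds a threshold determined by the exponents $a,b,c$, and with the bookkeeping you describe (a factor $\gm^{-1}$ and one derivative lost per cohomological equation, two such equations per step, plus interpolation losses in passing between high and low norms) that threshold lands far above $4$ --- this is precisely why Moser's original invariant curve theorem required very high differentiability, and why lowering the threshold to $C^4$, indeed to $C^{3+\beta}$ (cf.\ Remarks \ref{rmk:HermanKAM1} and \ref{rmk:HermanKAM2}), is a difficult result of Herman rather than a routine optimization of the scheme. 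So your proposal, as written, would establish a correct theorem but with ``$r\geq r_0$ for some large $r_0$'' in place of ``$r\geq 4$'' and an unspecified derivative loss in place of ``$C^{r-3}$''; it does not yield the statement the paper actually uses, whose application (Lemma \ref{lemma:PoincareMap} with $r=4$, forcing $\tau=3/20$ and hence the exponent $\mu^{1/20}$ in Theorem \ref{main}) depends quantitatively on exactly these two features.
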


\begin{rmk}\label{rmk:HermanKAM1}
In  \cite{H} it is shown that this theorem is also valid under the weaker assumption that the map $f_\eps$ is $C^{3+\beta}$ with any $\beta>0$ instead of $C^4$. This would slightly improve the density exponent in Theorem \ref{main} as already pointed out in Remark \ref{rmk:MainBetterDensity} (see also the Remark  \ref{rmk:HermanKAM2} below). We stay with regularity $C^4$ to have simpler estimates.
\end{rmk}

%

This theorem can be applied to the Poincar\'e map obtained in Lemma
\ref{lemma:PoincareMap}. 
Moreover, these KAM tori have smooth dependence on $g_0$. 
Indeed, all Poincare maps $\mathcal P_{h,g_0}:\{g=g_0\}\to\{g=g_0\}$ 
with different $g_0$ are conjugate to each other. 

Theorem \ref{kam} implies 
the existence of 2--dimensional tori $\cT_\omega^h$ which are invariant 
by the flow of $\widehat H_\mu$ in \eqref{i-i-h} with  energy 
$h=-J/2 \in (-3/2,\sqrt{2})$. Note that we cannot identify 
the quasiperiodic frequency $\omega=(\omega_\ell^h,\omega_g^h)$ 
of the dynamics  on $\cT_\omega$, only that their ratio 
$\omega_\ell^h/\omega_g^h=-1/L_{0,\omega}^3$ is fixed (and Diophantine).

\begin{cor}\label{corollary:KAM:eps-close}
For each $\widehat\omega\in B_\gamma$ with   $\gamma$ satisfying 
$\gamma\sim\eps^{1/2}$ and any $h\in(-3/2,\sqrt2)$ fixed, 
there is a KAM torus $\cT^h_\omega$, which is given by 
\[\cT_\omega^h=\{(\ell,g,L_{\omega,\mu}^h(\ell,g),G_{\omega,\mu}^h(\ell,g))\ 
|\ (\ell,g)\in\T^2\}\]
where $\omega_2/\omega_1=\widehat\omega$ and $(L_{\omega,\mu}^h,G_{\omega,\mu}^h)$ is a $C^{r-3}$ graph satisfying
\begin{equation}\label{drift}
\|L_{\omega,\mu}^h-L_{\omega,0}\|_{C^{r-3}}\lesssim\eps^{1/2},\quad\|G_{\omega,\mu}^h-G_{\omega,0}\|_{C^{r-3}}\lesssim\eps^{1/2}
\end{equation}
where $\eps=\mu^{1-6\tau}$,
\[
\widehat\omega=\frac{1}{L_{0,\omega}^3},\quad h=-\frac{1}{2L_{0,\omega}^2}-G_{0,\omega}.
\]
Moreover, $\bigcup_{\omega\in B^\gamma}\cT_\omega^h$ is $O(\gamma)$-dense in 
$\cV_\de$.
\end{cor}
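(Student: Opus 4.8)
The plan is to derive Corollary \ref{corollary:KAM:eps-close} as a direct translation of Herman's KAM Theorem (Theorem \ref{kam}) applied to the Poincar\'e map $\mathcal P_{h,g_0}$ from Lemma \ref{lemma:PoincareMap}, followed by suspending the resulting invariant circles back to the flow of $\widehat H_\mu$ in \eqref{i-i-h}. First I would identify the perturbative scale: Lemma \ref{lemma:PoincareMap} writes $\mathcal P_{h,g_0}$ as the integrable twist $(\ell,L)\mapsto(\ell-2\pi\omega(L),L)$ with $\omega(L)=L^{-3}$ plus a remainder $F$ with $\|F\|_{C^r}\leq C\mu^{1-(r+2)\tau}$. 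To match Theorem \ref{kam}, whose perturbation has $\|f_1\|_{C^r}\lesssim 1$ and size $\eps$, I would set $\eps=\mu^{1-6\tau}$, which is exactly the quantity named in the statement (here $r=4$ gives $r+2=6$, so $\mu^{1-(r+2)\tau}=\mu^{1-6\tau}=\eps$). The twist condition $M_0^{-1}\geq A'(I)\geq M_0>0$ holds because $\omega'(L)=-3L^{-4}$ is bounded above and below on $L\in(\de,\de^{-1})$ with constants depending only on $\de$. Thus for each constant-type Diophantine $\widehat\omega\in B_\gamma$ with $\gamma\sim\eps^{1/2}$, Theorem \ref{kam} produces an invariant circle of $\mathcal P_{h,g_0}$ that is a $C^{r-3}$ graph $L=L_{\omega,\mu}^h(\ell,g_0)$ over the angle $\ell$, $C^{r-3}$-conjugate to rotation by $\widehat\omega$, with graph displacement $\lesssim\eps^{1/2}$.

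Next I would reconstruct the 2-torus in the full phase space. Since $\mathcal P_{h,g_0}$ is the return map to the section $\{g=g_0\}$ on the energy level $\{\widehat H_\mu=h\}$, the suspension of an invariant circle is a 2-dimensional torus $\cT_\omega^h$ invariant under the flow of $\widehat H_\mu$. Solving the energy relation $\widehat H_\mu=h$ recovers $G$ as a function of $(\ell,g,L)$, so on the torus one obtains the graph $(\ell,g)\mapsto(L_{\omega,\mu}^h(\ell,g),G_{\omega,\mu}^h(\ell,g))$ displayed in the statement. The smooth dependence on $g_0$ noted after Theorem \ref{kam} — all the $\mathcal P_{h,g_0}$ being mutually conjugate — is what guarantees the graph is well-defined and $C^{r-3}$ jointly in $(\ell,g)$ rather than only fiberwise. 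The estimate \eqref{drift} comes from comparing the perturbed graph to the unperturbed invariant torus $\{L=L_{\omega,0},\,G=G_{\omega,0}\}$ of the integrable $-\tfrac{1}{2L^2}-G$: Theorem \ref{kam} gives $\|U_\omega\|_{C^{r-3}}\lesssim\eps^{1/2}$ for the conjugacy, and this transfers to both action components, yielding $\|L_{\omega,\mu}^h-L_{\omega,0}\|_{C^{r-3}}\lesssim\eps^{1/2}$ and likewise for $G$. The normalization $\widehat\omega=L_{0,\omega}^{-3}$ and $h=-\tfrac{1}{2L_{0,\omega}^2}-G_{0,\omega}$ simply fixes which unperturbed torus each KAM torus limits to.

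For the final density assertion, the last sentence of Theorem \ref{kam} states that the invariant tori cover the annulus $O(\eps^{1/2})$-densely in the $(\ell,L)$-section. Combined with Lemma \ref{lemma:densityconstantnumbers}, which asserts $B_\gamma$ is $\gamma$-dense in $\R$ with $\gamma\sim\eps^{1/2}$, the admissible rotation numbers fill an interval of frequencies densely at scale $\gamma$. Since $\omega(L)=L^{-3}$ is a diffeomorphism with derivative bounded away from zero on the relevant range, frequency-density at scale $\gamma$ translates into $L$-density at comparable scale, and suspending over $g\in\T$ spreads the tori across the second angle. Thus $\bigcup_{\omega\in B_\gamma}\cT_\omega^h$ is $O(\gamma)$-dense in $\cV_\de$; noting $\gamma\sim\eps^{1/2}=\mu^{(1-6\tau)/2}$, with the eventual choice $\tau=3/20$ this is $O(\mu^{1/20})$, consistent with the density exponent of Theorem \ref{main}.

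The main obstacle I expect is bookkeeping the regularity and the precise exponent rather than any deep difficulty: one must verify $r\geq 4$ and $1-(r+2)\tau>0$ are simultaneously satisfiable at $\tau=3/20$ (indeed $r=4$ gives $1-6\tau=1/10>0$), and confirm that the $C^{r-3}$ loss from Herman's theorem still leaves a $C^1$ graph, which is the regularity needed downstream in Proposition \ref{prop:r1-region-main}. A subtler point is justifying that the per-section conjugacies assemble into a genuinely $C^{r-3}$ torus in the flow; this is where the mutual conjugacy of the $\mathcal P_{h,g_0}$ across $g_0$, together with smoothness of the Poincar\'e return time, does the work, and I would state it carefully rather than treat it as automatic.
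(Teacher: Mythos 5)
Your proposal is correct and follows essentially the same route as the paper: applying Herman's KAM theorem (Theorem \ref{kam}) to the Poincar\'e map of Lemma \ref{lemma:PoincareMap} with $r=4$ (so $\eps=\mu^{1-6\tau}$), verifying the twist condition via $|\omega'(L)|=3/L^4$ bounded below on $\cV_\de$, assembling the invariant circles into flow-invariant 2-tori using the mutual conjugacy of the sections $\{g=g_0\}$, and deducing the $O(\gamma)$-density from the $\gamma$-density of $B_\gamma$, the diffeomorphism $\widehat\omega=1/L^3$, and \eqref{drift}. Your write-up is in fact more detailed than the paper's own terse justification, which treats the corollary as a direct application of Theorem \ref{kam} with exactly these ingredients.
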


This corollary is a direct application of Theorem \ref{kam}. 
The frequency in this setting is  given by $\omega(L)={1}/L^3$ and, 
thus
\[
\left|\omega'(L)\right|=\frac{3}{L^4}
\]
has a lower bound independent of $\mu$  (but depending on $\de$) in $\mathcal V_\delta$.  
Since the lower is regularity, the better are estimates for $\eps$ 
we choose $r=4$. To simplify notation, we omit the superindex $h$. Note that the density of the KAM tori is due to the $\gamma$-density of $B_\gamma$, the relation between $\widehat\omega$ and $L$ and \eqref{drift}.

\begin{rmk}
\label{rmk:gammachoice}
Note that one can  apply Theorem \ref{kam} with any 
$\gamma\gtrsim \sqrt{\eps}$ at the expense of obtaining a worse density of invariant tori. 
In Section \ref{sec:DensityDelaunay}, we choose $\gamma$ to optimize density for the collision orbits.
\end{rmk}

\subsection{The segment density argument in Delaunay 
coordinates}\label{sec:DensityDelaunay}
We use the KAM Theorem  to obtain the segment density estimates stated
in Proposition \ref{prop:r1-region-main}. We first obtain this density result 
in Delaunay coordinates. Taking into account that the change from Delaunay 
to the Cartesian coordinates $(u,v)$ is a diffeomorphism with uniform bounds 
independent of $\mu$, this will lead to the density estimates in Proposition 
\ref{prop:r1-region-main}.

For $\mu=0$ Lemma \ref{lemma:2BPdensity} describes the collision set 
in Delaunay coordinates as (two) graphs over the actions $(L,G)$ (see 
\eqref{def:CollisionGraph2bp}). By the implicit function theorem 
the same holds for small $\mu>0$ (see \eqref{def:CollisionDelaunay:mu}). 
Since the KAM tori obtained in Corollary \ref{corollary:KAM:eps-close} 
are graphs over $(\ell, g)$ and ``almost horizontal'' (see \eqref{drift}), 
the intersection between each of these KAM tori $\cT$ and 
the collision set consist of two points $(\ell_\col^{\pm,\mu}, 
g_\col^{\pm,\mu}, L(\ell_\col^{\pm,\mu}, g_\col^{\pm,\mu}), 
G(\ell_\col^{\pm,\mu}, g_\col^{\pm,\mu}))$. 
Denote the restriction of the collision neighborhoods 
$\cD^\pm_{\pol}$ to these cylinders by $\cD^\pm$.
Since the coordinate change from the polar coordinates to 
Delaunay is a diffeomorphism there are constants $C>C'>0$ 
independent of $\mu$ such that 
\begin{equation}\label{eq:LocalizationDpol}
\partial 
\cD^\pm\subset \left\{C'\mu^\tau\leq |(\ell-\ell^{\pm,\mu}_\col, g-g_\col^{\pm,\mu})|\leq 
C\mu^\tau\right\}.
\end{equation}

For any of the  tori $\mathcal T$ obtained in Corollary 
\ref{corollary:KAM:eps-close} we consider their graph parameterization
\[
\cT=\{(\ell,g,L_{\omega,\mu}^h(\ell,g),G_{\omega,\mu}^h(\ell,g))|(\ell,
g)\in\T^2\}
\]
and we  define the balls
\begin{equation}\label{def:PunctureInTorus}
\mathcal B_{\mathcal T}^\pm=\mathcal T\cap \left\{|(\ell-\ell^{\pm,\mu}_\col, 
g-g_\col^{\pm,\mu})|\leq C\mu^\tau\right\}.
\end{equation}

These balls can be viewed on Fig. \ref{actionangle} as neighborhoods of 
marked collision points in each torus. 
The main result of this section is the following 
lemma.

\begin{lem}\label{lemma:DensityDelaunay}
Fix $\de>0$ and  $\varpi>0$ small. Then, there exists  $\mu_0>0$ 
depending on $\de$ and $\varpi$, such that the following holds 
for any $\mu\in (0,\mu_0)$:

For any ${\mathbb  X}\in\cV_\de$, there exists a invariant torus $\cT$ 
obtained in Corollary \ref{corollary:KAM:eps-close} and 
a  $C^1$ curve $\mathfrak S\subset\cT$  of  length 
$O(\mu^{\frac{3}{20}})$ satisfying  
$\mathrm{dist}(\mathfrak S, \mathbb X)\leq O(\mu^{\frac{1}{20}})$ 
and a continuous function  $T_0:\mathfrak S \to\mathbb{R}^+$
such that 
\[
\cS_0=\left\{\phi_{H_\mu}(T_0(z),z):z\in\mathfrak S\right\}
\]
satisfies either  $\cS_0\in \partial \mathcal B_{\mathcal T}^+$ or 
$\cS_0\in \partial \mathcal B_{\mathcal T}^-$. In addition, 
\begin{enumerate}
\item The set $\cS_0$ is a graph over $(\ell, g)$ and satisfies either
\[
\begin{split}
\cS_{0}=\Big\{(\ell,g,L_{\omega,\mu}^h(\ell,g),G_{\omega,\mu}^h(\ell,
g))\Big|&(\ell,g)=(\ell^{+,\mu}_\col,g^{+,\mu}_\col)+
\mu^{ \frac{3}{20}} e^{is}\cdot 
\frac{\om}{|\om|},\\
&s\in\left[\frac\pi 
2+\varpi,\frac{3\pi}{2}-\varpi\right]\Big\}.
\end{split}
\]
or the same for the collision $(\ell^{-,\mu}_\col,g^{-,\mu}_\col)$.

\item 
For all $z\in \mathfrak S$ and $t\in (0, T_0(z))$, 
$\phi_{\widehat  H_\mu}(t,z)\not\in 
\mathcal B_{\mathcal T}^+\cup \mathcal B_{\mathcal T}^-$  
and therefore 
\[
\phi_{\widehat H_\mu}(t,z)=\phi_{H_\mu}(t,z),\quad \forall 
z\in \mathfrak S \ 
\text{ and }\ t\in(0,T_0(z)).
\]
 \end{enumerate}
\end{lem}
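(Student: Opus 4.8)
The plan is to carry out the whole construction on a single KAM torus, defining $\cS_0$ directly as the prescribed arc and recovering $\mathfrak S$ as its backward image; since the passage between Delaunay and polar/Cartesian coordinates is a diffeomorphism with uniform bounds, all estimates can be made in the $(\ell,g)$ coordinates on the torus. First I would use the $O(\gamma)$-density of the family $\{\cT^h_\omega\}$ from Corollary \ref{corollary:KAM:eps-close} (with $\gamma\sim\sqrt\eps$ and $\eps=\mu^{1-6\tau}=\mu^{1/10}$, so that $\gamma\sim\mu^{1/20}$) to select a torus $\cT$ whose graph passes within $O(\mu^{1/20})$ of $\mathbb X$; let $\mathbb X'\in\cT$ realise this distance. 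By Theorem \ref{kam} the flow of $\widehat H_\mu$ on $\cT$ is $C^{r-3}$-conjugate, through a map that is $O(\sqrt\eps)$-close to the identity, to the rigid rotation of $\T^2$ with frequency $\omega$, whose slope $\widehat\omega$ is of constant type with constant $\gamma$. With $P^\pm=(\ell^{\pm,\mu}_\col,g^{\pm,\mu}_\col)$ and the balls $\mathcal B^\pm_{\mathcal T}$ of radius $\sim\mu^\tau$ as in \eqref{def:PunctureInTorus}, I would then define $\cS_0$ to be the upstream semicircle on $\partial\mathcal B^+_{\mathcal T}$,
\[
\cS_0=\Big\{(\ell,g)=(\ell^{+,\mu}_\col,g^{+,\mu}_\col)+\mu^{\frac{3}{20}}\,e^{is}\,\tfrac{\om}{|\om|}\ :\ s\in\big[\tfrac{\pi}{2}+\varpi,\tfrac{3\pi}{2}-\varpi\big]\Big\},
\]
lifted to $\cT$ by its graph functions; this is the hemisphere facing $-\omega$ with two equatorial caps of angular width $\varpi$ removed.

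The core of the proof is to propagate $\cS_0$ backward and to find a time $T$ at which $\phi_{\widehat H_\mu}(-T,\cS_0)$ lies within $O(\mu^{1/20})$ of $\mathbb X'$ while the intervening orbit never re-enters a ball. Two competing time scales govern this. On one hand, the constant-type bound $\|n\widehat\omega\|\ge\gamma/n$ shows that the backward orbit issuing from $\partial\mathcal B^+_{\mathcal T}$ cannot return to within $\mu^\tau$ of $P^+$ before a recurrence time $T_{\mathrm{rec}}\gtrsim\gamma/\mu^\tau\sim\mu^{-1/10}$; on the other, the continued-fraction (three-gap) estimate for the rotation by $\widehat\omega$ shows that this orbit becomes $O(\mu^{1/20})$-dense in $\cT$ after a density time $T_{\mathrm{dens}}$. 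I would then take $\mathfrak S=\phi_{\widehat H_\mu}(-T,\cS_0)$ and $T_0\equiv T$ for the first $T\le T_{\mathrm{dens}}$ at which the orbit meets the $O(\mu^{1/20})$-ball around $\mathbb X'$.

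Granting such a $T$, the three conclusions follow. Since the conjugacy is an isometry up to bounded factors, uniformly in time, $\mathfrak S$ is a $C^1$ curve of length $O(\mu^{3/20})$; as its centre is $O(\mu^{1/20})$-close to $\mathbb X'$ and its diameter is $O(\mu^{3/20})\ll\mu^{1/20}$, we obtain $\mathrm{dist}(\mathfrak S,\mathbb X)=O(\mu^{1/20})$. The orbit tube of $\mathfrak S$ stays within $O(\mu^{3/20})$ of the central orbit, which avoids both balls up to $T_{\mathrm{rec}}\gg T$, so $\phi_{\widehat H_\mu}(t,z)\notin\mathcal B^+_{\mathcal T}\cup\mathcal B^-_{\mathcal T}$ for $t\in(0,T_0)$; there $\widehat H_\mu=H_\mu$, hence $\phi_{\widehat H_\mu}=\phi_{H_\mu}$ on this window and $\cS_0=\{\phi_{H_\mu}(T_0(z),z):z\in\mathfrak S\}$. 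The fixed margin $\varpi$ is essential here: the velocity on $\cT$ differs from $\omega$ by only $O(\mu^{1/20})\ll\varpi$, so $\cS_0$ remains strictly on the upstream side and the forward orbit of $\mathfrak S$ first meets $\partial\mathcal B^+_{\mathcal T}$ transversally, precisely along $\cS_0$.

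The main obstacle is the balance of the two time scales in the second step. At the optimal exponents $\tau=3/20$, $\gamma\sim\mu^{1/20}$ one has $\mu^{\tau}\sim\gamma^{3}$, so that $T_{\mathrm{rec}}$ and the a priori (worst-case) value of $T_{\mathrm{dens}}$ are both of order $\mu^{-1/10}$; the construction only closes because the constant-type condition pins both scales sharply and because one is free to choose the frequency $\widehat\omega\in B_\gamma$ near the target $L$-value (using the $\gamma$-density of $B_\gamma$ from Lemma \ref{lemma:densityconstantnumbers}) so that $T_{\mathrm{dens}}$ is genuinely shorter than $T_{\mathrm{rec}}$. This critical balance is exactly what fixes the exponent $1/20$ in Theorem \ref{main}, and relaxing the KAM regularity (Remark \ref{rmk:HermanKAM1}) is what would improve it.
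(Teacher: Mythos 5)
Your strategy coincides with the paper's (pick a $\gamma$-dense KAM torus with $\gamma\sim\mu^{1/20}$, reduce to a constant-type rotation, flow an arc on the boundary of a collision ball backward, and play a Dirichlet/three-gap density estimate against the Diophantine recurrence bound), but there is a genuine gap at the step where you assert that the intervening orbit ``never re-enters a ball'' and ``avoids both balls up to $T_{\mathrm{rec}}\gg T$.'' The constant-type inequality $\|n\widehat\omega\|\geq\gamma/n$ only bounds from below the return time to the ball around the \emph{same} collision point $P^+$ from which you flow backward; it says nothing about when the orbit enters the other ball $\mathcal B_{\mathcal T}^-$, whose position relative to $P^+$ is an arbitrary quantity not controlled by the Diophantine condition \eqref{eq:dioph-gamma}. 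Since the density time and the recurrence time are of the same order $\mu^{-1/10}$, the backward orbit of $\partial\mathcal B_{\mathcal T}^+$ may well enter $\mathcal B_{\mathcal T}^-$ before it has come $O(\mu^{1/20})$-close to $\mathbb X'$; if it does, $\phi_{\widehat H_\mu}$ and $\phi_{H_\mu}$ cease to coincide along it, and both conclusion (2) and the identity $\cS_0=\{\phi_{H_\mu}(T_0(z),z):z\in\mathfrak S\}$ break down. This is precisely why the lemma is stated as a dichotomy (``either $\partial \mathcal B_{\mathcal T}^+$ or $\partial \mathcal B_{\mathcal T}^-$''): the paper shows that if the backward orbit of each collision entered the $4C\mu^\tau$-neighborhood of the other at times $q_1,q_2\leq q^*$, then $\|(q_1+q_2+2)\omega\|<8C\mu^\tau$ would force $q_1+q_2\gtrsim\gamma\mu^{-\tau}$, contradicting $q_i\le q^*$; hence at least one of the two collisions has a backward orbit avoiding \emph{both} balls over the whole window, and the construction must be run from that collision --- which one is not yours to choose. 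Your proposal, which always works with $\mathcal B_{\mathcal T}^+$, is missing this argument and cannot be completed as written.

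A secondary problem is your resolution of the critical balance $T_{\mathrm{dens}}\approx T_{\mathrm{rec}}\approx\mu^{-1/10}$. Choosing $\widehat\omega\in B_\gamma$ ``near the target $L$-value'' only affects the distance from the torus $\cT$ to $\mathbb X$; it does not separate the two time scales, which depend only on $\gamma$, the ball radius $\mu^\tau$, and the density target. The paper closes this gap through the prefactor: with $\gamma=C'\mu^{1/20}$ and $C'$ large, the Cassels density time $\gamma^{-2}=C'^{-2}\mu^{-1/10}$ falls below the recurrence lower bound of order $C'\mu^{-1/10}$ from \eqref{def:MaximalTime}, since $C'^{-2}\ll C'$. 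With these two repairs --- the two-collision dichotomy and the constant bookkeeping --- your outline becomes essentially the paper's proof.
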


We devote the rest of the section to prove this lemma. Since the segments 
$\mathfrak{S}$ considered are contained in the KAM tori from  
Corollary \ref{corollary:KAM:eps-close},  we will use the density of tori 
to ensure that any point in $\mathcal V_\de$ has one of those segments 
nearby. Thus, we need to ensure that \\ 
\begin{itemize}
 \item[1.] By adjusting $\gamma$ in Corollary \ref{corollary:KAM:eps-close}:  
the KAM tori are dense enough (see Remark \ref{rmk:gammachoice}). \medskip
 \item[2.] There are segments whose \text{future evolution ``spreads densely enough''} 
 on these tori. \medskip
\end{itemize} 

Item 2 requires strong (Diophantine) properties on the frequency of the torus. 
The stronger the conditions we impose on the frequency, the better the 
spreading at expense of having fewer tori. This would give worse density 
in item 1. Thus, we need to obtain a balance between  the density of tori 
in the phase space and the good spreading of orbits in the chosen tori.

Fix one torus $\mathcal T$ from Corollary \ref{corollary:KAM:eps-close} 
and consider the associated balls $\mathcal B_{\mathcal T}^\pm$ given 
by  \eqref{def:PunctureInTorus}. To obtain the density statement, 
we first prove it for points belonging to the torus $\mathcal {T}$. 
Then due to sufficient density of KAM tori, we deduce Lemma \ref{lemma:DensityDelaunay} .

We want to show that any point $z\in \cT$ has a segment 
$\mathfrak S\subset\cT$ in its  $O(\mu^\frac{1}{20})$ neighborhood  which, under the flow of 
Hamiltonian \eqref{r3b-e} (in Delaunay coordinates), hits 
``in a good way'' either  $\partial \mathcal B_{\mathcal T}^-$ or 
$\partial \mathcal B_{\mathcal T}^+$, namely,  covering a large 
enough part of the boundary of the balls and incoming velocity
being almost constant (see Fig. \ref{pc}). 
Note that we apply KAM to the Hamiltonian \eqref{i-i-h} instead of 
the original one \eqref{r3b-e}. 
Since the Hamiltonian coincide only away from the union 
$\mathcal B_{\mathcal T}^- \cup \mathcal B_{\mathcal T}^+$,
we need to make sure that the evolution of $\mathfrak S$
does not intersect this union before hitting it ``in a good way''.
\\ 

To start, assume that $\cT$ has only one collision instead.  
Making a translation, we can assume that it is located at 
$(\ell,g)=(0,0)$. Later, we
adapt the construction to deal with tori having two collisions. \\ 

\noindent  {\bf One collision model case:} \  Since $\cT$ is a graph on $(\ell, g)$, 
we analyze the density in the projection onto the base. 
By Theorem \ref{kam}, the torus and its dynamics are 
$\eps^{1/2}=\mu^{(1-6\tau)/2}$-close to the unperturbed one. Moreover, after a $\eps^{1/2}$-close to the identity transformation, the base dynamics is a rigid rotation.  
Somewhat abusing notation we still denote transformed variables 
$(\ell,g)$. We analyze the density on the section $\{g=0\}$. Since 
the dynamics is a rigid rotation, the density in the section implies 
the density in the whole torus. 

We flow backward the collision and analyze the intersections of the orbits 
with $\{g=0\}$.  By a change of time, the orbits on the projection are just 
\begin{equation}\label{def:LinearFlow}
 (\ell(t),g(t))=(\ell^0+\omega t, g^0+t),
\end{equation}
where $\omega\in B_\gamma$, defined in \eqref{eq:dioph-gamma}, 
with $\gamma\gtrsim \sqrt{\eps}$. The intersections of the backward orbit 
starting at the collision $(0,0)$ with $\{g=0\}$ are given by $\|q\omega\|$, where 
\begin{equation}\label{def:norm}
 \|\alpha\|=\min_{p\in\mathbb Z}|\al-p|.
\end{equation}
Fix $C>0$. We study this orbit until it hits again 
a $C\mu^\tau$ neighborhood of the collision. 
Thus, we consider 
$q=-1,\ldots,-q^*$ where $q^*+1\in\mathbb N$ 
is the smallest solution to 
\[
 \|(q^*+1)\omega\|\leq 4C\mu^\tau.
\]
Assume that the (ratio of) frequencies of the  torus $\cT$ is in  
$B_\gamma$ (with $\gamma$ to be specified later). Then, 
we obtain that 
\begin{equation}\label{def:MaximalTime}
 |q^*|\geq \frac{1}{4C}\gamma\mu^{-\tau}-1.
\end{equation}
We need to study the density of $-q\omega \,\,(\mathrm{mod }\, 1)$ with 
$q=-1,\ldots,- q^*$. We apply the following non-homogeneous Dirichlet 
Theorem (see \cite{Cas}), where we use the notation \eqref{def:norm}.

\begin{thm}\label{thm:Cassels}
Let $L(x)$, $x=(x_1,\ldots, x_n)$ be a linear form and fix $A,X>0$. 
Suppose that there does not exist any $x\in \ZZ^n\setminus 0$ such that 
\[
 \|L(x)\|\leq A\,\,\,\text{ and }\,\,\,|x_i|\leq X.
\]
Then, for any $a\in\RR$, the equations
\[
 \|L(x)-a\|\leq A_1\,\,\,\text{ and }\,\,\,|x_i|\leq X_1.
\]
have an integer solution, where
\[
 A_1=\frac{1}{2}(h+1)A,\ \ \ X_1=\frac{1}{2}(h+1)X
\,\,\textup{ and }\,\,h=X^{-n}A^{-1}.
\]
\end{thm}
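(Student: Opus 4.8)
The plan is to recast the statement as a covering (inhomogeneous minimum) estimate in the geometry of numbers and then read it off from Minkowski's theorems. Write $d=n+1$ and $L(x)=c_1x_1+\cdots+c_nx_n$. To the form $L$ I associate the lattice
\[
\Lambda=\left\{\bigl(L(x)-p,\,x_1,\dots,x_n\bigr)\ :\ (x,p)\in\ZZ^{n+1}\right\}\subset\RR^{d},
\]
whose generating matrix has columns $(c_j,e_j)$ and $(-1,\vec 0)$, so that $\det\Lambda=1$. Let $K=[-A,A]\times[-X,X]^n$, a symmetric convex box with $\mathrm{vol}(K)=2^{d}AX^n$. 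With $\|\cdot\|$ the distance to $\ZZ$ of \eqref{def:norm}, the hypothesis ``no $x\in\ZZ^n\setminus 0$ has $\|L(x)\|\le A$ and $|x_i|\le X$'' says exactly that $K$ contains no nonzero point of $\Lambda$ (in the relevant regime $A<1$ the points with $x=0$ other than the origin lie outside $K$, so the condition on $x\neq 0$ is equivalent to lattice–point–freeness of $K$). Likewise the conclusion ``$\|L(x)-a\|\le A_1$, $|x_i|\le X_1$ is solvable'' says exactly that the translate $(a,\vec 0)+K_1$ meets $\Lambda$, where $K_1=\tfrac12(h+1)K$ since $A_1=\tfrac12(h+1)A$ and $X_1=\tfrac12(h+1)X$. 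Thus the theorem is equivalent to the covering bound: the inhomogeneous minimum of $(\Lambda,K)$ is at most $\tfrac12(h+1)$.

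Next I invoke Minkowski. Let $\lambda_1\le\cdots\le\lambda_d$ be the successive minima of $K$ with respect to $\Lambda$. Lattice–point–freeness of the closed box forces $\lambda_1>1$ (a point with $\lambda_1=1$ would sit on $\partial K$ and satisfy the closed inequalities of the hypothesis), so all $\lambda_i\ge 1$. Minkowski's second theorem together with $\det\Lambda=1$ gives
\[
\lambda_1\cdots\lambda_d\ \le\ \frac{2^{d}\det\Lambda}{\mathrm{vol}(K)}=\frac{2^{d}}{2^{d}AX^n}=\frac{1}{AX^n}=h .
\]
To bound the inhomogeneous minimum I choose (as is possible in the low dimensions relevant here) a basis $b_1,\dots,b_d$ of $\Lambda$ realizing the minima, so $b_i\in\lambda_i K$, expand any target as $(a,\vec 0)=\sum_i t_i b_i$, round each $t_i$ to the nearest integer $m_i$, and use symmetry and convexity of $K$ (for which $sK+tK=(s+t)K$) to place the remainder in
\[
\sum_i (t_i-m_i)\,b_i\ \in\ \sum_i \tfrac12\lambda_i K=\tfrac12(\lambda_1+\cdots+\lambda_d)\,K .
\]
Hence the lattice point $\sum_i m_i b_i$ lies within $\tfrac12\sum_i\lambda_i$ of the target measured in $K$–units, i.e. the inhomogeneous minimum is at most $\tfrac12\sum_i\lambda_i$.

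It remains to pass from $\tfrac12\sum_i\lambda_i$ to $\tfrac12(h+1)$. In the one–variable case $n=1$ — the only case used in the density argument of Section \ref{sec:DensityDelaunay}, where $L(q)=q\omega$ — this is immediate: with $d=2$ the successive minima genuinely form a basis, and since $\lambda_1,\lambda_2\ge 1$ we have $(\lambda_1-1)(\lambda_2-1)\ge0$, whence
\[
\lambda_1+\lambda_2\ \le\ \lambda_1\lambda_2+1\ \le\ h+1 ,
\]
so the inhomogeneous minimum is at most $\tfrac12(h+1)$, which is the claim. The main obstacle is precisely the general $n$: the crude rounding bound $\tfrac12\sum_i\lambda_i$ is wasteful once $d\ge 3$ (several minima may equal $1$ while $\lambda_d\approx h$, giving $\sum_i\lambda_i\approx h+n>h+1$), and the minima need not even form a basis in high dimension. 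This is the content of Cassels' transference argument \cite{Cas}, whose proof does not proceed through a reduced basis but exploits the box structure of $K$ and the absence of lattice points directly, filling the gaps in the distinguished first–coordinate ($a$) direction while using the remaining freedom in $x$ to keep $|x_i|\le X_1$; reproducing that sharp covering estimate for all $n$ is the technical heart of the statement.
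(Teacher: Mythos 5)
The paper offers no proof of Theorem \ref{thm:Cassels} at all: it is quoted from Cassels' book \cite{Cas} and used as a black box, so your attempt can only be judged on its own merits rather than against an internal argument. Your geometric reformulation is correct and cleanly done: the determinant-one lattice $\Lambda\subset\RR^{d}$, $d=n+1$, the box $K=[-A,A]\times[-X,X]^n$ of volume $2^{d}AX^n$, the equivalence of the hypothesis with $K\cap\Lambda=\{0\}$ (valid for $A<1$; if $A\geq1$ the hypothesis forces $X<1$, and then $x=0$ already solves the inhomogeneous problem, so that regime is trivial), the bound $\lambda_1\cdots\lambda_{d}\leq h$ from Minkowski's second theorem, and $\lambda_1>1$ from compactness of $K$. (One small imprecision: the theorem only requires covering targets of the form $(a,\vec 0)$, so it is \emph{implied by}, not equivalent to, the full covering bound; harmless, since you prove the stronger statement.) For $n=1$ your proof is complete and correct: in dimension two the successive minima are attained on a lattice basis, the rounding argument bounds the covering radius by $\frac12(\lambda_1+\lambda_2)$, and $(\lambda_1-1)(\lambda_2-1)\geq0$ converts $\lambda_1\lambda_2\leq h$ into $\lambda_1+\lambda_2\leq h+1$.

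For the theorem as stated, i.e. for general $n$, there is a genuine gap, which you flag yourself: from $\lambda_i>1$ and $\prod_i\lambda_i\leq h$ one can only extract $\sum_i\lambda_i\leq\prod_i\lambda_i+(d-1)$, so the rounding bound yields a covering radius of $\frac12(h+n)$ rather than $\frac12(h+1)$, and for $d\geq4$ the minima need not even be attained on a basis. The sharp constant for all $n$ is precisely Cassels' transference theorem, which you cite rather than reprove; so what you have is a proof of a special case plus an appeal to the reference --- structurally the same appeal the paper itself makes. The mitigating fact, which you correctly identify, is that the paper only ever applies the theorem with $n=1$, $L(q)=q\omega$ (in Section \ref{sec:DensityDelaunay} and again in Section \ref{sec:Wandering}), so your self-contained two-dimensional argument covers every use made of the theorem in the paper; if the statement were restated with $n=1$, your proof would be complete and would even remove the paper's dependence on \cite{Cas}.
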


We use this theorem to show that 
the iterates $-q\omega \,\,(\mathrm{mod }\, 1)$ are $\gamma$-dense.


Since the frequency  $\omega$ is in $B_\gamma$, the equation 
$\|q\omega\|< \gamma X^{-1}$ has no solution for $|q|\leq X$ and any $X>0$. 
Therefore, Theorem \ref{thm:Cassels} implies that for any $\om \in \mathbb 
R/\mathbb Z$ there exists $q$ satisfying
\[
 \|q\omega-\alpha\|\leq \frac{1}{X} \quad 
\text{ and }\quad |q|\leq X\gamma^{-1}
\]
We take  $q^*=[X\gamma^{-1}]$.  Since we need $\gamma$-density, 
$X=\gamma^{-1}$. Then, using also \eqref{def:MaximalTime}, 
we obtain the following condition 
\[
\gamma^{-2}= |q^*| \geq\frac{1}{4C}\gamma\mu^{-\tau}-1\geq 
\frac{1}{5C}\gamma\mu^{-\tau}.
\]
Moreover, to apply Corollary \ref{corollary:KAM:eps-close}, one needs
\[
 \gamma\gtrsim \mu^{\frac{1-6\tau}{2}}.
\]
Thus, one can take, in particular  $\gamma\geq (5C)^{\frac 13} 
\mu^{\frac{1-6\tau}{2}}$. Then, it is easy to check that taking 
\[
 \tau=\frac{3}{20}
 ,
 \qquad \gamma=C\mu^{\frac{1}{20}}.
\]
for $C>1$ large enough independent of $\mu$, the two inequalities 
are satisfied. Moreover, this choice of $\gamma$,  optimizes 
the density of both KAM tori and the spreading of orbits in these tori.

\begin{rmk}\label{rmk:HermanKAM2}
If one considers regularity $C^{3+\beta}$ with $\beta>0$ small instead of $C^4$, as explained in Remark \ref{rmk:HermanKAM1}, one can proceed analogously. One would obtain then 
\[
 \tau=\frac{3}{17+3\beta}
 ,
 \qquad \gamma=C\mu^{\frac{1}{17+3\beta}}.
\]
This would lead to the improved density pointed out in Remark \ref{rmk:MainBetterDensity}.
\end{rmk}

\noindent   {\bf Two collisions in each torus:} \  The reasoning above has 
the simplifying assumption that each torus has only one collision 
instead of two. Now we incorporate the second collision. Note 
that the only problem of including the other collision is that 
the considered backward orbit departing from collision 1 
located at $(0,0)$ may have intersected the 
$4C\mu^\tau$-neighborhood of the other collision, where 
the two flows $\phi_{\widehat H_\mu}(t,z)$ and 
$\phi_{H_\mu}(t,z)$ differ, before reaching the final time 
$t=-q^*$. We prove that the backward orbit until time 
$-q^*$ from one collision may intersect 
the $4C\mu^\tau$-neighborhood of the other collision, 
but this \textit{cannot happen} for the $(-q^*)$-time 
backward orbits of the two collisions, just for one of them.

Assume that the collisions are located at $(0,0)$ and $(\ell',g')$. 
Call $(\ell'', 0)$ the first intersection between $g=0$ and the backward 
orbit  of the point $(\ell',g')$ under the flow \eqref{def:LinearFlow}
(see Figure \ref{actionangle}). 
The time to go  from $(\ell',g')$ to $(\ell'', 0)$ is independent of $\mu$ 
and, therefore, studying returns to the $1$-dimensional section suffices.
Assume that both the $(-q^*)$-backward orbit of $(0,0)$ hits a $4C\mu^\tau$ neighborhood of  $(\ell'',0)$ 
and the $(-q^*)$-backward orbit of $(\ell'',0)$ hits a $4C\mu^\tau$ neighborhood of  $(0,0)$. That is, 
there exist $0\leq q_1, q_2\leq q^*$ such that
\[
 \begin{split}
 \|q_1\omega-\ell'' \|&< 4C\mu^\tau\\
 \|q_2\omega+\ell'' \|&< 4C\mu^\tau.
 \end{split}
\]
Using the Diophantine condition
\[
 \frac{\gamma}{|q_1+q_2+2|}\leq  \|(q_1+q_2+2)\omega \|\leq  \|(q_1+1)\omega-\ell'' \|+ \|(q_2+1)\omega+\ell'' \|< 8C\mu^\tau.
\]
Therefore, $q_1+q_2>  8C\gamma\mu^{-\tau}-2$ which, by \eqref{def:MaximalTime}, implies that either $q_1$ or $q_2$ satisfies 
$q_i>  4C\gamma\mu^{-\tau}-1$. This contradicts $q_i\leq q^*$. 

Thus, the $(-q^*)$-backward orbit under the flow $\phi_{H_\mu}$ of 
one of the two collisions covers the torus $\mu^\frac{1}{20}$--densely.
Equivalently,  for any point $(\ell_0,g_0)$ in the torus, there exists a point 
$(\ell^*,g^*)$ which is $\mu^\frac{1}{20}$-close to a trajectory of the flow 
$\phi_{H_\mu}$ hitting either $\partial \mathcal B_{\mathcal T}^-$ or 
$\partial \mathcal B_{\mathcal T}^+$. Now, since the invariant tori are  
$\gamma\sim\mu^{\frac{1}{20}}$ dense in $\mathcal V_\de$ by 
Corollary \ref{corollary:KAM:eps-close}, we have that 
the $\mu^{\frac{1}{20}}$ neighborhood of any point in $\mathcal V_\de$ 
contains a point whose orbit reaches  either  
$\partial \mathcal B_{\mathcal T}^-$ or $\partial \mathcal B_{\mathcal T}^+$.

We do not want just one orbit to hit  $\partial \mathcal B_{\mathcal T}^\pm$  
but we want  a whole  segment of  length $\sim\mu^{\frac{3}{20}}$ to hit 
as stated in  Item 1 of Lemma \ref{lemma:DensityDelaunay}. 
Since we have considered coordinates such that the dynamics on $\cT$ is a rigid rotation, one can see that the 
orbit of any point $C\mu^\tau$-close to $(\ell^*,g^*)$ does not hit 
 $\mathcal B_{\mathcal T}^+$ for time $q^*+O(1)$ either.  Therefore, 
$\mu^{\frac{1}{20}}$-close to any point one can construct a segment 
which hits $\partial\mathcal B_{\mathcal T}^+$ as stated in Item 1 of 
Lemma \ref{lemma:DensityDelaunay}.

The considered coordinates are different but $\eps^{1/2}$-close to the original $(\ell,g)$ (recall that abusing notation we have kept the same notation for both systems of coordinates). Nevertheless, all the statements proven are coordinate free and, therefore, are still valid in the original $(\ell,g)$ coordinates.

Moreover, the localization in 
actions is a direct consequence from the graph property in Corollary 
\ref{corollary:KAM:eps-close}. 
Item 2 is a direct consequence of the fact that the constructed orbits do not intersect  $\mathcal B_{\mathcal T}^\pm$ until they hit its boundary at time $q^*+O(1)$. This completes the proof of Lemma \ref{lemma:DensityDelaunay}.

%

\subsection{Back to Cartesian coordinates: proof of Proposition \ref{prop:r1-region-main}}\label{sec:BackToCartesian}
To deduce Proposition \ref{prop:r1-region-main} from Lemma \ref{lemma:DensityDelaunay} it only remains to change coordinates to $(u,v)$. Note that the only statement which is not coordinate free in Lemma \ref{lemma:DensityDelaunay} is the graph property and localization in the variable $v$ in Item 1. To this end we need to analyze the change of coordinates $(\ell,g)\to u$ in a neighborhood of the collisions (note that the graph property  is only stated in these neighborhoods).

Using the Delaunay transformation and the graph property obtained in Lemma \ref{lemma:DensityDelaunay}, the segment $\mathcal{S}$ expressed in cartesian coordinates can be parameterized as 
\[
\begin{split}
u\equiv u(\ell, g, L,G)= u(\ell, g, L(\ell, g),G(\ell, g)), \ 
v\equiv v(\ell, g, L,G)= v(\ell, g, L(\ell, g),G(\ell, g)).
\end{split}
 \]
It only remains to show that we can invert the first row to express $(\ell,g)$ as a function of $u$. As a first step, we can express $(\ell, g)$ in terms of the polar coordinates $(r,\varphi)$. Using the definition of Delaunay coordinates, one can easily check that 
 \[
\left|\frac{\partial(\ell,g)}{\partial(r,\varphi)}\right|=\left|\begin{pmatrix}
   \partial_r \ell   &  0  \\
     \partial_r g  &  \partial_\varphi g 
\end{pmatrix}\right|= \left|\partial_r \ell \cdot \partial_\varphi g \right|=\left|\frac{1-e\cos u}{L^2e\sin u}\right|.
\]
The location of the collisions in Delaunay coordinates has been given in \eqref{def:CollisionDelaunay:mu}. This implies that in 
a $\mu^\tau$--neighborhood of the collisions
\[
 1-e\cos u=\frac{1}{L^2}+O(\mu^\tau)\neq 0
\]
Moreover, by condition \eqref{def:condLe}, $|\cos u|<1-\delta'$ for some $\delta'>0$ independent 
of $\mu$ and depending only on the parameter $\de$ introduced in \eqref{def:Vdelta}. This implies 
that $|\sin u|\geq \de''$ for some $\de''>0$ only depending on $\de'$. This implies that the change 
$(r,\varphi)\to (\ell,g )$ is well defined and a diffeomorphism in a $\mu^\tau$--neighborhood of the collisions. 
Since $(r,\varphi)\to u$ is a diffeomorphism, this gives the graph property stated in Proposition \ref{prop:r1-region-main}. 

Now, we need to prove the localization of the velocity $v$. To this end, it suffices to define the velocity $v_0$ as
\[
v_0=v\left(\ell_\col^{\pm,\mu},g_\col^{\pm,\mu},L_{\omega,\mu}(\ell_\col^{\pm,\mu},g_\col^{\pm,\mu}),G_{\omega,\mu}(\ell_\col^{\pm,\mu},g_\col^{\pm,\mu})\right).
\]
That is, the velocity $v$ evaluated on the (removed) collision point at the torus $\mathcal T$. Here the choice of $+$ or $-$ depends on the neighborhood of what collision the segment $\cS_0$ has hit. Using the smoothness of the torus, the estimate \eqref{drift} and estimates on the changes of coordinates just mentioned, one can obtain the localization in Item 1 of  Proposition \ref{prop:r1-region-main}.

Finally, let us mention that Lemma \ref{lemma:DensityDelaunay} considers $\cS_0\subset\partial \mathcal B_{\mathcal T}^\pm$ (see \eqref{def:PunctureInTorus}). On the contrary, Propostion  \ref{prop:r1-region-main} considers $\cS_0$ at $\partial \mathcal D_\pol^\pm$ (see \eqref{def:PuncturePolars}). These balls do not coincide since are expressed in different variables. Nevertheless, the boundaries are very close as stated in  \eqref{eq:LocalizationDpol}. Since the flow is close to integrable in the annulus  in \eqref{eq:LocalizationDpol}, one can  flow $\cS_0$ from $\partial \mathcal B_{\mathcal T}^\pm$ to $\partial \mathcal D_\pol^\pm$ keeping all the stated properties.
\end{proof}

\begin{figure}
\begin{center}
\includegraphics[width=8cm]{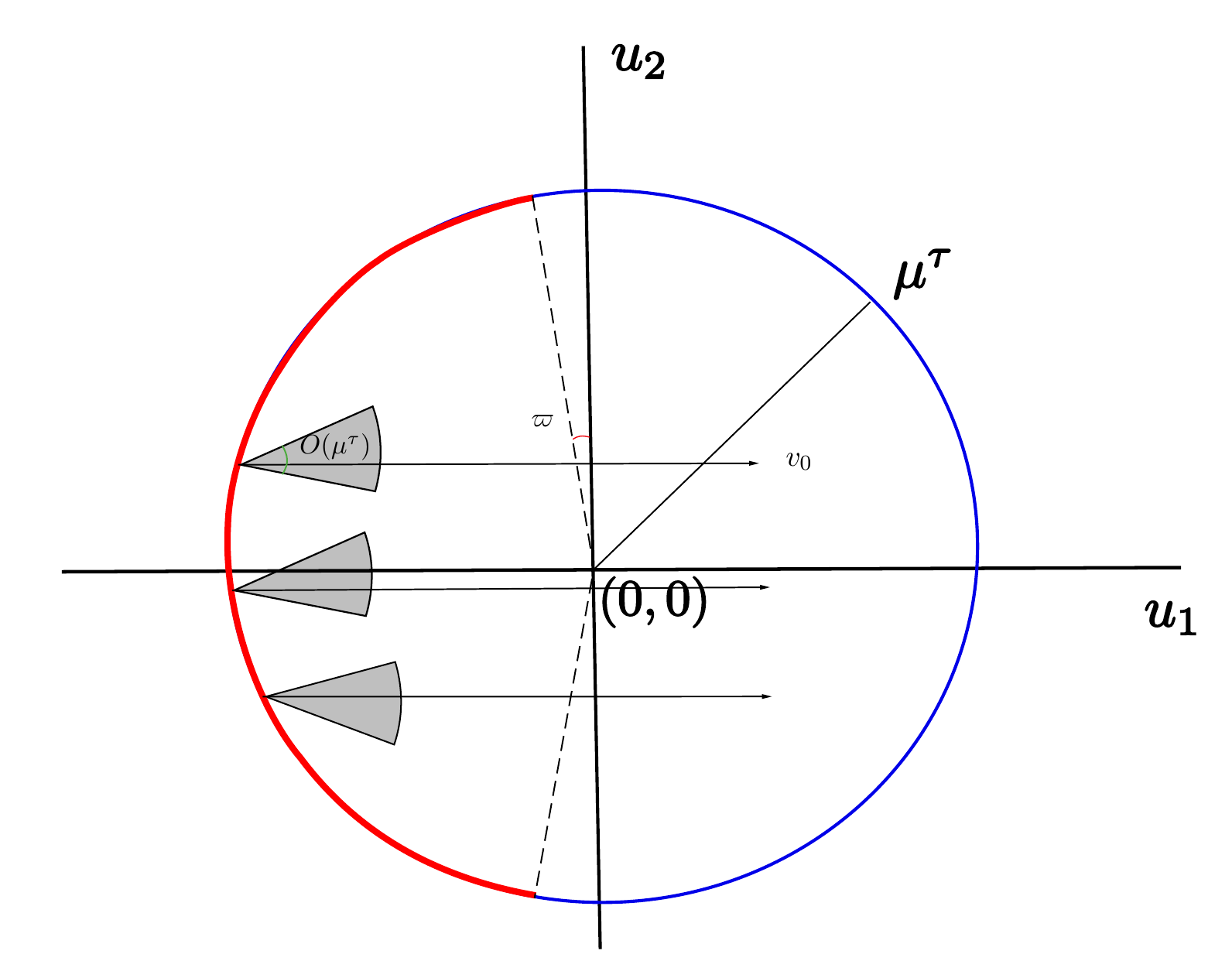}
\caption{Projection of $\cS_{0}$ onto the configuration space along with incoming velocity, which must belong to the grey cones.}
\label{pc}
\end{center}
\end{figure}


\section{The transition region $R_2$}\label{sec:transition}


In this section, we analyze the evolution of the segment $\mathcal S_{0}$ 
in the Transition Region (see \eqref{def:Regions}). More precisely, the goal 
is to prove that the evolution under the flow of $H_\mu$ of a subset of 
$\mathcal S_{0}$ reaches the inner boundary of the annulus $R_2$ 
(see \eqref{def:Regions}) and to obtain properties of this image set 
(see Fig. \ref{lr}).

To this end, we take $\rho>0$ and  we consider a section $\Gamma_1$ transversal to the flow
\begin{equation}\label{gamma1}
\begin{split}
\Gamma_1=&\Big\{\xi e^{i\pi/2}\frac{v_{0}}{|v_{0}|}\in\R^2\Big|\xi\in[-\mu^\tau,-\rho\mu^{1/2}\sec\frac{\varpi}{2}]\cup[\rho\mu^{1/2}\sec\frac{\varpi}{2},\mu^\tau]\ \Big\}\bigcup\nonumber\\
 &\Big\{\lb\rho\mu^{1/2}e^{i(\frac{\varpi}{2}+\frac{\pi}{2})}\frac{v_{0}}{|v_{0}|}+(1-\lb)\rho\mu^{1/2}\sec\frac{\varpi}{2}e^{i\pi/2}\frac{v_{0}}{|v_{0}|}\Big|\ \lb\in[0,1]\Big\}\ \bigcup\,\Gamma_{1,\varpi}
\end{split}
 \end{equation}
where
\begin{equation}\label{def:Gamma1'}
\Gamma_{1,\varpi}:=\left\{\rho\mu^{1/2} e^{i\theta}\cdot \frac{v_{0}}{|v_{0}|}\left|\theta\in\left[\frac\pi 2+\frac\varpi 2,\frac{3\pi}{2}-\frac \varpi2\right]\right.\right\},
\end{equation}
(see Fig. \ref{lr} ).
%
The main result of this section is the following.
\begin{prop}\label{prop:r2-region}
Consider the curve $\cS_{0}$ defined in Proposition \ref{prop:r1-region-main}. Then, for $\rho>0$ large enough and $\mu>0$ small enough,  there exists a subset $\cS'_{0}\subset\cS_{0}$  such that for all $P\in \cS'_{0}$ there exists 
a time $T_1(P)>0$ continuous in $P\in \cS'_{0}$ such that 
\[
\Gamma_{1,\varpi}\subset\pi_u\Big\{\phi_{H_\mu}(T_1(P),P):P\in\cS'_{0}\Big\}\subset \Gamma_1,
\]
where
$\phi_{H_\mu}(t,\cdot)$ is the flow associated to the Hamiltonian \eqref{r3b-e}. 

Moreover, if we denote by 
\[
\cS_{1}:=\Big\{\phi_{H_\mu}(T_1(P),P)\Big|\ P\in \cS'_{0}\Big\},
\]
 the following properties hold
\begin{itemize}
\item $\cS_{1}$ is a $C^0$ curve.
\item For all $P\in\cS_{1}$ , $\|\pi_vP-v_{0}\|\leq O(\mu^{\tau/3})$. 
\item For all $P\in\cS_{1}$, $T_1(P)\lesssim \mu^{\tau}$.
\end{itemize}
\end{prop}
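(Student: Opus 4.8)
The guiding principle is that $R_2$ is an annulus of outer radius $\mu^\tau$ which the orbit crosses with speed comparable to $|v_{0}|\geq C>0$, so the transition takes time $O(\mu^\tau)$; over such a short time the flow of $H_\mu$ is a small perturbation of the free (constant velocity) flow, and I would deduce every assertion by comparing the true orbit to the straight line $t\mapsto \pi_u P+v_{0}t$. First I would write the equations of motion of \eqref{r3bp-local} as $\dot u=v+Ju$ and $\dot v=Jv+F_\odot(u)-\mu\,u/|u|^3$, where $F_\odot(u)=(1-\mu)(1,0)-(1-\mu)(u+(1,0))/|u+(1,0)|^3$ is the regular Sun contribution, and record the size of each term on $R_2$. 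The term $Ju$ and the bracketed combination $F_\odot$, which vanishes at $u=0$, are $O(|u|)=O(\mu^\tau)$; the Coriolis term $Jv$ is $O(1)$; and the only dangerous term is Jupiter's attraction $\mu\,u/|u|^3$, of size $\mu/|u|^2$, which reaches $O(\rho^{-2})$ at the inner boundary $|u|=\rho\mu^{1/2}$. Taking $\rho$ large keeps this term subdominant and is exactly what makes the linear approximation legitimate in $R_2$.

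The core quantitative step is a bootstrap keeping the orbit $O(\mu^{\tau/3})$-close in velocity to $v_{0}$ as long as $|u|\ge\rho\mu^{1/2}$. The velocity increment is $\int_0^{t}\dot v$, and the contributions of $Jv$ and of $F_\odot$ are $O(\mu^\tau)$ over a time $O(\mu^\tau)$, hence negligible against the initial error $O(\mu^{1/20})$ inherited from Proposition \ref{prop:r1-region-main}. For Jupiter's term the key observation is that, since we stop the orbit the first instant it meets $\{|u|=\rho\mu^{1/2}\}$, one has $|u|\ge\rho\mu^{1/2}$ along the relevant portion; approximating the orbit by a straight line with signed impact parameter $b$ and parameterizing by the along-track coordinate $x$ gives $\int_0^{T_1}\mu|u|^{-2}\,dt\lesssim |v_{0}|^{-1}\int_{\rho\mu^{1/2}}^{\mu^\tau}\mu\,x^{-2}\,dx\lesssim \mu^{1/2}/\rho$, far below $O(\mu^{1/20})$. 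Feeding this back closes the bootstrap and yields both $\|\pi_vP-v_{0}\|\le O(\mu^{\tau/3})$ on $\cS_1$ and $T_1(P)\lesssim\mu^\tau$, the latter because the distance travelled is $O(\mu^\tau)$ at speed bounded below by $|v_{0}|/2$.

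Finally I would prove $\Gamma_{1,\varpi}\subset\pi_u\{\phi_{H_\mu}(T_1(P),P)\}\subset\Gamma_1$ by an intermediate value argument on the signed impact parameter $b(s)$ of the orbit issued from the point of $\cS_{0}$ at parameter $s$. The bootstrap shows $b$ is continuous in $s$ with $b(\tfrac\pi2+\varpi)\approx+\mu^\tau\cos\varpi$ and $b(\tfrac{3\pi}{2}-\varpi)\approx-\mu^\tau\cos\varpi$; since $\mu^\tau\gg\rho\mu^{1/2}$, the intermediate value theorem produces a subsegment $\cS'_{0}\subset\cS_{0}$ on which $b$ attains every value in $[-\rho\mu^{1/2},\rho\mu^{1/2}]$. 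For the nearly free flow the first crossing of $\{|u|=\rho\mu^{1/2}\}$ occurs at $-\sqrt{\rho^2\mu-b^2}+ib$ in the $v_{0}$-frame, whose argument runs monotonically from $3\pi/2$ to $\pi/2$ as $b$ runs from $-\rho\mu^{1/2}$ to $\rho\mu^{1/2}$; thus the entry angle sweeps all of $[\pi/2,3\pi/2]$, and the fixed margin $\varpi/2$ is precisely what absorbs the $O(\mu^{1/2}/\rho)$ deflection and the $O(\mu^{1/20})$ directional uncertainty of $v$, so that the perturbed entry angle still covers $[\pi/2+\varpi/2,3\pi/2-\varpi/2]$, i.e.\ $\Gamma_{1,\varpi}$. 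The orbits from the complementary part of $\cS'_{0}$ exit through the straight walls of $\Gamma_1$, giving the inclusion in $\Gamma_1$, while continuity of $T_1(\cdot)$ together with continuity of the flow gives that $\cS_1$ is a $C^0$ curve.

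The delicate point, and the main obstacle, is a mismatch of scales: the velocity is only known to $O(\mu^{1/20})$, which over the traverse of $R_2$ produces a transverse drift of order $\mu^{1/20}\cdot\mu^{3/20}=\mu^{1/5}$, far larger than the target radius $\rho\mu^{1/2}$. One therefore cannot aim individual orbits at a prescribed impact parameter; the covering must instead be extracted from the global, $O(\mu^\tau)$-scale sign change of $b(s)$ along $\cS_0$ via the intermediate value theorem, while at the same time Jupiter's near-singular pull is controlled self-consistently through the bootstrap. Making these two mechanisms cooperate, and verifying that the fixed aperture $\varpi$ dominates all the $\mu$-dependent errors, is the crux of the argument.
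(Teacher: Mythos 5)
Your proposal is correct and takes essentially the same route as the paper: the paper likewise approximates the $R_2$ dynamics by an explicitly integrable linear flow (Lemma \ref{lemma:R2:linear}), bounds its discrepancy with the true flow by variation of constants using $|u|\leq\mu^\tau$ and $\mu/|u|^2\leq\rho^{-2}$ (Lemma \ref{flow-est}), and obtains the covering of $\Gamma_{1,\varpi}$ by the same continuity/intermediate-value mechanism you spell out. The only differences are minor: the paper's comparison flow is $H_\lin$, which keeps the Coriolis terms exactly (explicit rotation matrices) rather than treating them as $O(1)$ errors as you do (both are negligible at the required $O(\mu^{\tau/3})$ velocity tolerance), and the paper leaves the final connectedness/IVT step implicit, whereas you correctly identify it as the crux, since the $O(\mu^{1/5})$ transverse drift makes it impossible to aim individual orbits at the $\rho\mu^{1/2}$-scale target.
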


To prove Proposition \ref{prop:r2-region} we first consider a first order of 
the equations associated to Hamiltonian $H_\mu$ in \eqref{r3b-e}. Taking 
into account that in the region $R_2$ we have that $|u|\leq\mu^\tau$ 
(see \eqref{def:Regions}), we define the Hamiltonian 
\begin{equation}\label{def:HamLinear}
H_{\lin}(u,v)=\frac{|v|^2}{2}-u^tJ v,
\end{equation}
which will be a ``good first order'' of $H_\mu$ and whose equations are linear,
\[
\begin{split}
  \dot u_1&=v_1+u_2\\
  \dot u_2&=v_2-u_1\\
  \dot v_1&=v_2\\
  \dot v_2&=-v_1.
  \end{split}
\]
  \begin{lem}\label{lemma:R2:linear}
Consider the curve $\cS_{0}$ defined in Proposition \ref{prop:r1-region-main}. Then,  there exists a subset $\cS^\lin_{0}\subset\cS_{0}$  such that for all $ P\in \cS^\lin_{0}$ there exists a time $T_\lin(P)>0$ continuous in $P\in \cS^\lin_{0}$ such that 
\begin{equation}\label{covering-gamma-1}
\Gamma_{1,\varpi}\subset\pi_u\Big\{\phi_{H_\lin}({T_\lin(P)},P):P\in\cS^\lin_{0}\Big\}\subset \Gamma_1,
\end{equation}
where $\Gamma_{1,\varpi}$ has been defined in \eqref{def:Gamma1'} and $\phi_{H_\lin}(t,\cdot)$ is the flow associated to Hamiltonian (\ref{def:HamLinear}). Moreover, if we define
\[
\cS^\lin_{1}=\Big\{\phi_{H_\lin}({T_\lin(P)},P)\Big|P\in \cS^\lin_{0}\Big\},
\]
 the following properties hold
\begin{itemize}
\item $\cS^\lin_{1}$ is a $C^0$ curve.
\item For all $P\in\cS^\lin_{1}$ , $\|\pi_vP-v_{0}\|\leq O(\mu^{\tau/3})$. 
\item For all $P\in\cS^\lin_{1}$, $T_\lin(P)\leq O(\mu^{\tau})$.
\end{itemize}
\end{lem}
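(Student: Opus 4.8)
The plan is to analyze the explicit linear flow of $H_{\lin}$ in \eqref{def:HamLinear} and track how the almost-horizontal segment $\cS_0$ evolves until it reaches the inner circle of radius $\rho\mu^{1/2}$. First I would integrate the linear system explicitly. Writing $w=v_1+iv_2$, the $v$-equations give $\dot w=-iw$, so $v(t)=e^{-it}v(0)$, i.e. the velocity simply rotates at unit angular speed. Substituting into the $u$-equations, which in complex notation $z=u_1+iu_2$ read $\dot z = -iz + w$, one solves by variation of constants to get an explicit formula $z(t)=e^{-it}(z(0)+t\,w(0))$. This is the crucial structural fact: in these rotating coordinates the configuration point drifts \emph{linearly in time} along the (rotating) direction fixed by the initial velocity $v(0)\approx v_0$, since $|z(t)|^2 = |z(0)+t v(0)|^2$. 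Thus up to the overall rotation $e^{-it}$, the trajectory of each point of $\cS_0$ is a straight line in the configuration plane with velocity $\approx v_0$.

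With this formula in hand, the next step is geometric. Recall from Proposition \ref{prop:r1-region-main} that $\cS_0$ is the graph $\{(u,V(u))\}$ with $u=\mu^\tau e^{is}v_0/|v_0|$, $s\in[\pi/2+\varpi,3\pi/2-\varpi]$, and $V(u)$ within $O(\mu^{1/20})$ of $v_0$. So $\cS_0$ sits on the outer circle $|u|=\mu^\tau$ on the half facing $-v_0$, with incoming velocity essentially $v_0$ — that is, each point enters moving inward. Using $z(t)=e^{-it}(u(0)+t\,V(u(0)))$ and $|z(t)|=|u(0)+t\,V(u(0))|$, I would show that along each such ray the modulus first decreases, reaches a minimum comparable to the impact parameter, and then increases. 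For the point whose velocity is aimed most directly at the origin the trajectory passes $O(\mu^{1/20})$-close to the center, and hence crosses the inner circle $|z|=\rho\mu^{1/2}$ once $\rho$ is large enough (so that $\rho\mu^{1/2}$ exceeds the worst-case minimal distance but is still far smaller than $\mu^\tau$ with $\tau=3/20$). The time to reach the inner circle is $T_\lin(P)\approx |u(0)|/|v_0|\lesssim \mu^\tau$, giving the time bound; and over such a short time the rotation $e^{-it}$ moves the velocity by only $O(t)=O(\mu^\tau)$, while $V-v_0=O(\mu^{1/20})$, yielding $\|\pi_v P - v_0\|\le O(\mu^{\tau/3})$ on $\cS^\lin_1$ with room to spare.

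The covering statement \eqref{covering-gamma-1} is the heart of the matter and is where I would use an intermediate-value / continuity argument. As the parameter $s$ sweeps the admissible arc of $\cS_0$, the impact direction of the (rotated) straight-line trajectory sweeps continuously. I would define $\cS^\lin_0$ as the subset of parameters for which the trajectory actually meets the inner circle $|z|=\rho\mu^{1/2}$, and consider the map $P\mapsto$ (angular coordinate $\theta$ of the first crossing point $\phi_{H_\lin}(T_\lin(P),P)$ on that circle). This map is continuous in $P$ by smooth dependence of the explicit flow and the transversality of the crossing. Showing its image contains the full arc $\Gamma_{1,\varpi}$ (i.e.\ $\theta$ ranging over $[\pi/2+\varpi/2,3\pi/2-\varpi/2]$) reduces, by the intermediate value theorem, to checking that the two endpoints of the swept family land on opposite sides, i.e.\ the crossing angle covers an arc at least as large as $\Gamma_{1,\varpi}$. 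Because the incoming velocity is nearly constant $\approx v_0$ while the entry point ranges over a half-circle of directions, the exit angles spread over essentially the opposite half-circle, which comfortably contains $\Gamma_{1,\varpi}$ for $\mu$ small and $\rho$ large; the role of the slightly shrunk aperture $\varpi/2$ versus $\varpi$ is precisely to absorb the $O(\mu^{1/20})$ tilt of $V$ away from $v_0$. The containment in the outer set $\Gamma_1$ follows because the crossing stays on the circle of radius $\rho\mu^{1/2}$ up to the small rotation and velocity deviation, which keeps it inside the fattened region defined in \eqref{gamma1}.

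The main obstacle I anticipate is the covering/surjectivity step rather than the integration: one must verify quantitatively that the spread of exit angles genuinely covers $\Gamma_{1,\varpi}$ despite the $O(\mu^{1/20})$ uncertainty in the incoming velocity and the $O(\mu^\tau)$ rotation accumulated over the transit time, and that every relevant trajectory indeed reaches radius $\rho\mu^{1/2}$ (no trajectory in the covering family ``misses'' the inner circle). This is exactly what forces the choice of $\rho$ large and the half-aperture reduction from $\varpi$ to $\varpi/2$, and it is where careful but elementary estimates on the explicit solution $z(t)=e^{-it}(z(0)+tw(0))$ must be pushed through. Once Lemma \ref{lemma:R2:linear} is established for $H_{\lin}$, Proposition \ref{prop:r2-region} will follow by a perturbative Gronwall-type comparison, bounding $H_\mu-H_{\lin}$ on the region $R_2$ where $|u|\le\mu^\tau$ and transporting all estimates from the linear flow $\phi_{H_\lin}$ to $\phi_{H_\mu}$.
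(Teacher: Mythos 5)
Your overall route is the same as the paper's: the published proof consists of exactly the two ingredients you start from --- the explicit solution $u(t)=e^{-it}\bigl(u^0+t\,v^0\bigr)$, $v(t)=e^{-it}v^0$ (your complex-notation formula is identical to the paper's rotation-matrix formula) and the uniform lower bound on $|v^0|$ --- after which the paper declares the remaining geometry ``straightforward''. Your plan fills in that geometry with the right tools (impact-parameter scattering, continuity, intermediate value theorem), so in outline it matches.

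However, one quantitative step of your filled-in geometry is wrong, and it is precisely the step that is supposed to make trajectories reach the inner circle. You claim the best-aimed trajectory passes $O(\mu^{1/20})$-close to the center ``and hence crosses the inner circle $|z|=\rho\mu^{1/2}$ once $\rho$ is large enough (so that $\rho\mu^{1/2}$ exceeds the worst-case minimal distance)''. No fixed $\rho$ can achieve this: since $\tau=3/20$, one has $\rho\mu^{1/2}\ll\mu^{1/5}\ll\mu^{1/20}$ as $\mu\to 0$, so $\rho\mu^{1/2}$ never exceeds a distance of order $\mu^{1/20}$ (nor of order $\mu^{\tau+1/20}=\mu^{1/5}$, which is the actual size of the aiming error $|V-v_0|\cdot|u^0|$). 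Largeness of $\rho$ is irrelevant in this lemma; it matters only in Lemma \ref{flow-est}, where the $H_\mu$--$H_{\lin}$ discrepancy is $O(\rho^{-1})$. What actually guarantees crossing is a \emph{sign change}, not a smallness estimate: writing $u^0(s)=\mu^\tau e^{is}v_0/|v_0|$ and $V(u^0(s))=v_0+O(\mu^{1/20})$, the signed impact parameter of the line $u^0(s)+tV(u^0(s))$ is $b(s)=\mu^\tau\sin s+O(\mu^{\tau+1/20})$, which is $\geq\tfrac12\mu^\tau\cos\varpi$ at $s=\tfrac\pi2+\varpi$ and $\leq-\tfrac12\mu^\tau\cos\varpi$ at $s=\tfrac{3\pi}{2}-\varpi$. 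Since $\rho\mu^{1/2}\ll\mu^\tau$, the intermediate value theorem produces a sub-arc $\cS_0^{\lin}$ on which $b$ runs over all of $\bigl[-\rho\mu^{1/2}\cos\tfrac\varpi2,\,\rho\mu^{1/2}\cos\tfrac\varpi2\bigr]$; this sub-arc is \emph{tiny} (it lies in an $O(\mu^{1/20})$-neighborhood of the parameter aimed at the center --- outside it, $|b|\gtrsim\mu^{1/5}\gg\rho\mu^{1/2}$, so most of $\cS_0$ never reaches $R_3$, contrary to what your ``entry points range over a half-circle, exit angles cover the opposite half-circle'' picture suggests). The exit angle on $|u|=\rho\mu^{1/2}$, measured from $\arg v_0$, equals $\pi+\arcsin\bigl(b/(\rho\mu^{1/2})\bigr)$ up to $O(\mu^{1/20})$ corrections absorbed by the $\varpi/2$ margin, so as $b$ sweeps that interval the exit point sweeps all of $\Gamma_{1,\varpi}$, giving \eqref{covering-gamma-1}. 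With this replacement your argument closes. Two minor points: once $\cS_0^{\lin}$ is \emph{defined} by $|b|\leq\rho\mu^{1/2}\cos\tfrac\varpi2$, your worry that some trajectory ``in the covering family misses the inner circle'' is vacuous; and the velocity bound has no room to spare --- since $\tau/3=1/20$, the initial spread $O(\mu^{1/20})$ of $V$ about $v_0$ exactly saturates the stated bound $O(\mu^{\tau/3})$, the rotation over time $O(\mu^\tau)$ being the smaller contribution.
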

\begin{proof}
 The proof of this lemma is straightforward taking into account that $|u|\lesssim\mu^\tau$ in $R_2$, that the trajectories associated to the Hamiltonian in \eqref{def:HamLinear} are explicit and given by 
 \[
\begin{split}
 \begin{pmatrix} u_1\\u_2\end{pmatrix}&=\begin{pmatrix}\cos t&\sin t\\-\sin t&\cos t\end{pmatrix} \begin{pmatrix} u_1^0\\u^0_2\end{pmatrix}+t\begin{pmatrix}\cos t&\sin t\\-\sin t&\cos t\end{pmatrix} \begin{pmatrix} v_1^0\\v^0_2\end{pmatrix}\\
 \begin{pmatrix} v_1\\v_2\end{pmatrix}&=\begin{pmatrix}\cos t&\sin t\\-\sin t&\cos t\end{pmatrix} \begin{pmatrix} v_1^0\\v^0_2\end{pmatrix}
 \end{split}
 \]
and the fact that $(v_1^0,v^0_2)$ has a lower bound independent of $\mu$.
\end{proof}

\begin{figure}
\begin{center}
\includegraphics[width=10cm]{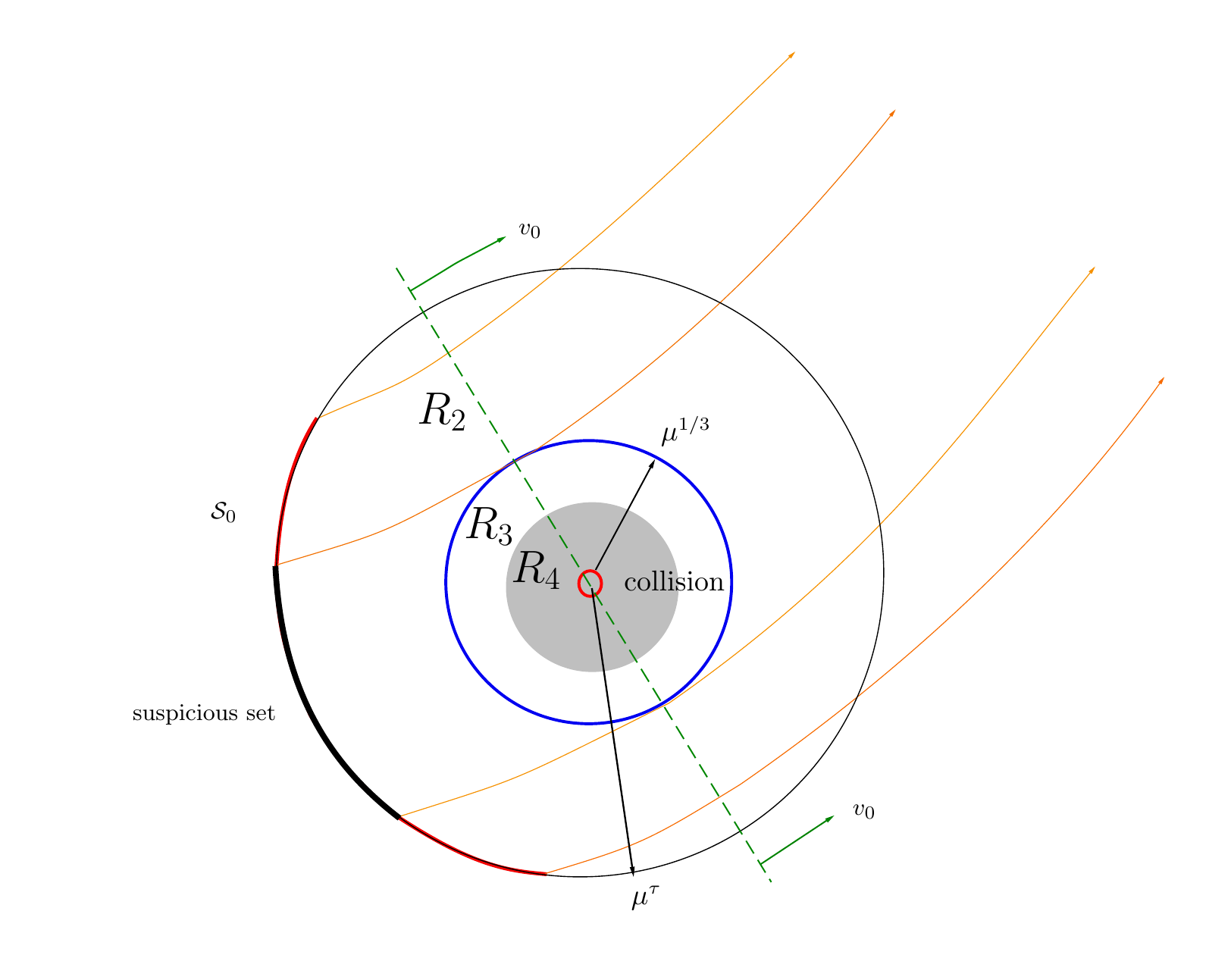}
\caption{Geometry of the incoming curve near collisions, see
 \eqref{def:Regions}.}
\label{pc}
\end{center}
\end{figure}


Once Lemma \ref{lemma:R2:linear} has given the behavior in Region $R_2$ 
of the flow associated to the Hamiltonian \eqref{def:HamLinear}, now 
we compare its dynamics to those of $H_\mu$ in \eqref{r3b-e}.

\begin{lem}\label{flow-est}
Take $\rho>0$ large enough and $\mu>0$ small enough.
Then, for all $P\in \cS_0$,  there exists  $T_1(P)>0$ continuous in $P$ satisfying
\begin{equation}\label{real-time}
|T_1(P)-T_\lin(P)|\lesssim \rho^{-1}\mu^{2\tau}
\end{equation}
such that  $\pi_u\phi_{H_\mu}(t,P)\in\mathrm{Int}(R_2)$ for all $t\in (0, T_1(P))$, $\pi_u\phi_{H_\mu}(T_1(P),P)\in\Gamma^1$ with 
\begin{equation}\label{real-v}
\left\|\pi_v\phi_{H_\mu}(T_1(P),P)-\pi_v\,\phi_{ H_\lin}(T_\lin(P),P)\right\|\lesssim\rho^{-1}\mu^{2\tau}.
\end{equation}
\end{lem}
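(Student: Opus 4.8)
The plan is to treat the restriction of $H_\mu$ to the annulus $R_2$ as a perturbation of the linear system $H_{\lin}$ in \eqref{def:HamLinear} and to control the difference of the two flows by variation of constants. First I would record that $H_\mu-H_{\lin}=R(u)$ depends only on $u$: expanding the Sun potential $-\tfrac{1-\mu}{|u+1|}$ around $u=0$, the linear term cancels exactly against $-(1-\mu)u_1$, so that, up to an irrelevant additive constant,
\[
R(u)=-\frac{\mu}{|u|}+S(u),\qquad S(u)=O(|u|^2).
\]
Because $R$ is independent of $v$, the $\dot u$--equations of $H_\mu$ and $H_{\lin}$ coincide, so for $(\delta u,\delta v)=(u-u_{\lin},v-v_{\lin})$ with common initial datum $P$ the only forcing in the variational equation is $-\partial_u R(u)$, entering the $\dot{\delta v}$ slot. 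Writing $A$ for the fixed bounded linear generator of the $H_{\lin}$ flow, Duhamel's formula gives
\[
(\delta u(t),\delta v(t))=\int_0^t e^{A(t-s)}\bigl(0,-\partial_u R(u(s))\bigr)\,ds .
\]
Since the transit time is $O(\mu^{\tau})$ by Lemma \ref{lemma:R2:linear}, one has $e^{A(t-s)}=I+O(\mu^\tau)$, so it suffices to bound $\int_0^{T}\lvert\partial_u R(u(s))\rvert\,ds$.

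Next I would estimate the integrand. In $R_2$ one has $\lvert\partial_u R(u)\rvert\lesssim \mu/|u|^2+|u|$, with the Jupiter term $\mu/|u|^2$ dominant and largest on the inner boundary $|u|=\rho\mu^{1/2}$, where it equals $\rho^{-2}$. The essential point is that a uniform Lipschitz/Gronwall bound is useless here, because this term is not integrable in time a priori; instead the argument must exploit transversality. Starting from the incoming datum $\pi_v P$, which by Proposition \ref{prop:r1-region-main} is $O(\mu^{1/20})$--close to $v_{0}$ with $|v_{0}|$ bounded below and pointing into the cone encoded by $\Gamma_{1,\varpi}$ in \eqref{def:Gamma1'}, a short bootstrap shows that the inward radial speed $-\tfrac{d}{ds}|u(s)|$ stays bounded below by a constant $c>0$ throughout the transit. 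Substituting $r=|u(s)|$, $ds=dr/|\dot r|$, then converts the dangerous time integral into a convergent spatial one,
\[
\int_0^{T}\frac{\mu}{|u(s)|^{2}}\,ds\;\lesssim\;\frac{\mu}{c}\int_{\rho\mu^{1/2}}^{\mu^{\tau}}\frac{dr}{r^{2}}\;\lesssim\;\frac{\mu}{c}\,\frac{1}{\rho\mu^{1/2}}\;=\;\frac{1}{c}\,\rho^{-1}\mu^{1/2}\;\le\;\rho^{-1}\mu^{2\tau},
\]
using $2\tau=\tfrac{3}{10}\le\tfrac12$. The smooth part contributes only $\int_0^T|u(s)|\,ds\lesssim\mu^{2\tau}$, which is subdominant and absorbed into the same bound. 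Together these give $\lVert\delta v(T)\rVert\lesssim\rho^{-1}\mu^{2\tau}$, i.e.\ \eqref{real-v}, and the same integral controls $\delta u$.

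Finally I would define $T_1(P)$ as the first time $\pi_u\phi_{H_\mu}(t,P)$ meets $\Gamma_1$ from \eqref{gamma1}. The $O(\rho^{-1}\mu^{2\tau})$--closeness of the two trajectories, combined with the fact (Lemma \ref{lemma:R2:linear}) that the linear orbit crosses the annulus and exits transversally through the inner section, shows that the true orbit stays in $\mathrm{Int}(R_2)$ for $t\in(0,T_1(P))$ and leaves through $\Gamma_1$; continuity and positivity of $T_1(P)$ follow from transversality of the flow to $\Gamma_1$ together with continuous dependence on initial conditions, and the uniform transversal speed at $\Gamma_1$ upgrades the $C^0$--closeness of trajectories to the time estimate $\lvert T_1(P)-T_{\lin}(P)\rvert\lesssim\rho^{-1}\mu^{2\tau}$ in \eqref{real-time}. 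I expect the main obstacle to be precisely the non-integrability of the Jupiter force near the inner boundary: the whole estimate hinges on the transversality/bootstrap step producing a uniform lower bound on the radial speed, since without it the singular term $\mu/|u|^2$ cannot be integrated and the linear approximation collapses. A secondary technical point is verifying that the cone condition forces the orbit to exit through the inner section $\Gamma_1$ rather than recrossing the outer boundary $\{|u|=\mu^\tau\}$.
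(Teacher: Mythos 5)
Your high-level skeleton (treat $H_\mu$ in $R_2$ as a perturbation of $H_{\lin}$, apply Duhamel, convert trajectory closeness into the time estimate via transversality to the section) is the same as the paper's, but the step you single out as the crux is both unnecessary and, as you state it, false. The paper's proof is essentially two lines: by the definition \eqref{def:Regions} of $R_2$ one has $|u|\geq \rho\mu^{1/2}$ there, so the Jupiter force obeys $\mu/|u|^2\leq \rho^{-2}$ and the full perturbing force is \emph{uniformly} $O(\rho^{-1}+\mu)$; since the transit time is $O(\mu^\tau)$ by Lemma \ref{lemma:R2:linear}, variation of constants immediately gives a position error $O(\rho^{-1}t^2)$ and velocity error $O(\rho^{-1}t)$, which transversality to $\Gamma_1$ turns into \eqref{real-time} and \eqref{real-v}. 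Your claim that ``a uniform Lipschitz/Gronwall bound is useless here, because this term is not integrable in time a priori'' is exactly backwards: the inner boundary $|u|=\rho\mu^{1/2}$ of $R_2$ exists precisely so that the force is bounded on this region, and the time interval is finite, so the sup bound integrates trivially to $O(\rho^{-2}\mu^\tau)$. The genuinely singular behaviour is deferred to $R_3$ and handled by Levi--Civita regularization, not in this lemma.

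Worse, the substitute you build has a genuine gap: the ``short bootstrap'' asserting that the inward radial speed stays bounded below by a constant $c>0$ throughout the transit is false for general $P\in\cS_0$, and the lemma quantifies over all of $\cS_0$. Orbits in $R_2$ are nearly straight lines with velocity close to $v_0$; a point of $\cS_0$ entering at angle $s$ (notation of Proposition \ref{prop:r1-region-main}) has impact parameter $d\approx \mu^\tau|\sin s|$, which for $s$ bounded away from $\pi$ is of order $\mu^\tau\gg\rho\mu^{1/2}$. Such an orbit never reaches the inner arc: its radial speed vanishes at closest approach $|u|\approx d$ and it exits through the straight piece of $\Gamma_1$ in \eqref{gamma1}, so your substitution $ds=dr/|\dot r|$ is unjustified on exactly the orbits where $r$ fails to be monotone. (It can be repaired, e.g.\ by parameterizing by arclength in the direction of $v_0$, which gives $\int \mu|u|^{-2}\,ds\lesssim \mu/\max(d,\rho\mu^{1/2})$, but that rebuilds with more effort something the trivial bound already supplies.) A secondary slip: absorbing the Sun contribution $\int_0^T|u|\,ds\lesssim\mu^{2\tau}$ into $\rho^{-1}\mu^{2\tau}$ is not legitimate, since $\mu^{2\tau}>\rho^{-1}\mu^{2\tau}$. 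In fact neither your argument nor the paper's proof yields \eqref{real-v} with the factor $\mu^{2\tau}$ --- the paper's own computation gives $O(\rho^{-1}\mu^{\tau})$ for the velocity error --- but this discrepancy is immaterial, because all that Proposition \ref{prop:r2-region} and the subsequent intersection argument use is closeness $O(\mu^{\tau/3})$, which every version of the estimate delivers with a wide margin.
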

\begin{proof}

The region $R_2$ satisfies $|u|\leq \mu^\tau$. Therefore, the equation associated to Hamiltonian $H_\mu$ in \eqref{r3b-e} satisfies
\[
\begin{split}
  \dot u_1&=v_1+u_2\\
  \dot u_2&=v_2-u_1\\
  \dot v_1&=v_2+O\left(\rho^{-1}+\mu\right)\\
  \dot v_2&=-v_1+O\left(\rho^{-1}+\mu\right).
  \end{split}
\]
Since $\rho$ is taken such that $\rho^{-1}\gg \mu$; we have that this equation is  $O(\rho^{-1})$--close to the equation of $H_\lin$ (see \eqref{def:HamLinear}). 

Consider the trajectory $(u(t), v(t))$ of  $(u^0,v^0)\in \cS_0$ under the flow of $H_\mu$. Then, applying variation of constants formula, as long as the trajectory remains in $R_2$, we have
 \[
\begin{split}
 \begin{pmatrix} u_1\\u_2\end{pmatrix}&=\begin{pmatrix}\cos t&\sin t\\-\sin t&\cos t\end{pmatrix} \begin{pmatrix} u_1^0\\u^0_2\end{pmatrix}+t\begin{pmatrix}\cos t&\sin t\\-\sin t&\cos t\end{pmatrix} \begin{pmatrix} v_1^0\\v^0_2\end{pmatrix}+O\left(\rho^{-1} t^2\right)\\
 \begin{pmatrix} v_1\\v_2\end{pmatrix}&=\begin{pmatrix}\cos t&\sin t\\-\sin t&\cos t\end{pmatrix} \begin{pmatrix} v_1^0\\v^0_2\end{pmatrix}+O\left(\rho^{-1} t\right)
 \end{split}
 \]
Then, it is straightforward to prove \eqref{real-time} and \eqref{real-v}.
\end{proof}
Recall that for any starting point $(u^0,v^0)\in \cS_0$, 
we know $\|v^0-v_0\|\lesssim\mu^{\tau/3}$. From Lemmas \ref{lemma:R2:linear} and \ref{flow-est}, one can easily 
deduce the proof of Proposition \ref{prop:r2-region}.

\section{Levi-Civita Regularization in the region $R_3$}\label{sec:LeviCivita}

The last step to prove Theorem \ref{main} is to show that 
there is a point inside the curve $\cS_1$ (from Proposition \ref{prop:r2-region}) whose trajectory hits a collision. 
To this end we analyze a $\rho\mu^{1/2}$--neighborhood of 
the collision $u=0$ by means of the Levi-Civita regularitzation.

For $|u|\leq\rho\mu^{1/2}$, system $H_\mu(u,v)$ can be
expanded as
\begin{equation}\label{def:HamExpR4}
H_\mu(u,v)=\frac{|v|^2}{2}-u^tJ v-\frac{\mu}{|u|}-\frac12(\mu-1)(\mu-3)-\frac12(1-\mu)(2u_1^2-u_2^2)+O(u^3).
\end{equation}
Performing the following scaling and time reparamaterization
\begin{equation}\label{def:scaling}
u=\mu^{1/2} \wt u, \quad t=\mu^{1/2}\varsigma,
\end{equation}
we obtain a new system, which is Hamiltonian with respect to 

\begin{equation}\label{def:Hrho}
\widetilde{H}_\rho(\wt u,v)=\frac
12|v|^2-\mu^{1/2}\wt u^tJv-\mu^{1/2}\frac{1}{|\wt u|}
-\frac12 \mu(1-\mu)(2\wt u_1^2-\wt u_2^2)+O(\mu^{3/2} \wt u^3).
\end{equation}
Recall that we have fixed
$C_J\in(-2\sqrt2,3)$. Thus, for $\mu\ll1$ small enough, the
energy of $\widetilde H_\rho(\wt u,v)$ belongs to $(0,\sqrt2+3/2)$ (note the
constant term $(\mu-1)(\mu-3)/2$ in \eqref{def:HamExpR4} and recall that $C_J=-2
H_\mu$).

Consider the set $\Gamma_{1,\varpi}$ introduced in \eqref{def:Gamma1'}. We express it in the new coordinates
 \begin{equation}\label{def:Gamma2:Scaled}
\Gamma^0_{1,\varpi}=\left\{\rho e^{is}\cdot \frac{v_{0}}{|v_{0}|}\left|\ s\in\left[\frac\pi 2+\frac\varpi 2,\frac{3\pi}{2}-\frac \varpi2\right]\right.\right\}.
 \end{equation}
We want to apply the Levi Civita regularization to the Hamiltonian
$\widetilde{H}_\rho(\wt u,v)$ restricted to fixed level of energies. 
To this end, we introduce the constant $\xi$ which 
represents the energy of $\widetilde H_\rho$ as $\widetilde
H_\rho(\wt u,v)=\frac{1}{2\xi^2}$. Denote by 
$\widetilde H_\rho^0(\wt u,v)$, the Hamiltonian containing 
the ``leading'' terms of $\widetilde{H}_\rho$, 
\[\widetilde H_\rr^0(\wt u,v)=\frac 12|v|^2-\mu^{1/2}\wt
u^tJv-\mu^{1/2}\frac{1}{|v|}.\]
Then the difference between $\widetilde H_\rr^0(\wt u,v)$ and $\widetilde
H_\rho(\wt u,v)$ 
satisfies $\|\widetilde H_\rho(\wt u,v)-\widetilde
H_\rr^0(\wt u,v)\|_{C^3}\leq O(\mu)$. \\

Fix $\varpi>0$  a small constant independent of 
$\mu$ and $\rr$ and a level of energy in $(0,\sqrt2+3/2).$
The goal of this section is to study which orbits starting at 
$\wt u=\rho e^{is}$, with  
$s\in[\frac\pi 2+\varpi,\frac{3\pi}{2}-\varpi]$,
tend to collision. We analyze them by considering 
the Levi-Civita transformation
\begin{equation}\label{lc-tran}
\wt u=2z^2,\quad v=\frac{w}{\xi\bar z}
\end{equation}
with $\wt u\in\R^2\cong \C$ uniquely identified by a  complex number and
$\xi\sim
O(1)$ being a scaling constant depending on the energy. Applying this
change of coordinates and a time scaling to $\widetilde H_\rho$ in \eqref{def:Hrho}, we obtain a
new system which is Hamiltonian with respect to
\[K_\rho(z,w)=\xi^2|z|^2\Big[\widetilde H_\rho\left(2z^2,\frac{w}{\xi\bar
z}\right)-\frac{1}{2\xi^2}\Big].\]
Note that the change of time is regular only away from collision $z=0$. At $z=0$ it regularizes the collisions. 

The change of coordinates \eqref{lc-tran} implies that $K^{-1}_\rho(0)\backslash\{z=0\}$ defines a two-fold covering of
the energy  surface $\widetilde H_\rho^{-1}({1}/{2\xi^2})\setminus\{u=0\}$. Moreover, the flow
on $K_\rho^{-1}(0)\backslash\{z=0\}$ becomes the flow on $\widetilde
H_\rho^{-1}({1}/{2\xi^2})\setminus\{u=0\}$ via the time reparametri\-zation.

In the new coordinates $(z,w)$,  the section $\Gamma^0_{1,\varpi}$ in \eqref{def:Gamma2:Scaled}
becomes 
 \[
 \widetilde\Gamma_{1,\varpi}^0=\left\{z=\sqrt{\frac{\rho}{2}}\,
 e^{i(s+s_0)}\left| \ s\in \left[\frac\pi 4+\frac\varpi 4,\frac{3\pi}{4}-\frac \varpi4\right]
 \right. \right\} 
 \]
where $2s_0$ is the argument of $v_{0,w}$. Define $\displaystyle
\widetilde\rr=\sqrt{\frac{\rho}{2}}$. 

If one restricts $ \widetilde\Gamma_{1,\varpi}^0$ to the zero
level of energy, that is $\widetilde\Gamma_2^0\cap K^{-1}_\rho(0)$, one has 
$|z|=\widetilde\rho$ and $|w|=\widetilde\rho+O(\mu^{1/2})$. Thus, since
$ \widetilde\Gamma_{1,\varpi}^0\cap K^{-1}_\rho(0)$ is two dimensional, it can be
parameterized by the arguments of $z$ and $w$. We can express $K_\rho(z,w)$ as 
\begin{equation}\label{lc-eq}
\begin{split}
K_\rho(z,w)=&\frac
12(|w|^2-|z|^2)-\frac12\mu^{1/2}\xi^2\\
&
-2i\xi^2\mu^{1/2}|z|^2(\bar z w-\bar w
z)
-\frac12(1-\mu)\mu\xi^2\Big[2|z|^6+3|z|^2(z^4+\bar z^4)+O(z^8)\Big],
\end{split}
\end{equation}
with $(z,w)\in B(0,O(\widetilde\rho))\subset \C^2$. 
Taking into account that $|z|=\widetilde\rho$ and 
$|w|\sim \widetilde\rho$, the second line is of higher order
compared to the first one. 

We want to analyze the orbits which hit a collision. In  coordinates $(z,w)$, 
this corresponds to orbits intersecting $\{z=0\}$. Equivalently, we analyze 
orbits with initial condition at $\{z=0\}$ at the energy surface 
$K_\rho^{-1}(0)$ and we consider their backward trajectory.

Consider the first order of the Hamiltonian  \eqref{lc-eq}, given by 
\begin{equation}\label{def:HamLC:quadratic}
 K^0_\rho(z,w)=\frac 12(|w|^2-|z|^2)-\frac12\mu^{1/2}\xi^2.
\end{equation}
It  has a resonant saddle
critical point $(0,0)$, with $1$ as a positive eigenvalue of multiplicity two.  
We analyze the dynamics of the quadratic Hamiltonian at the energy 
surface $K_\rho^{-1}(0)$. Later we deduce that the full system has 
approximately the same behavior. 

We consider collisions points at $K_\rho^{-1}(0)$ as initial condition. That is, by \eqref{def:HamLC:quadratic}, points of the form
\begin{equation}\label{def:InitialCondLeviCivita}
 z=0,\quad w=\delta_\mu e^{i\psi}\quad\text{with}\quad
\delta_\mu=\mu^{1/4}\xi\quad\text{ and }\quad \psi\in\mathbb R/(2\pi\mathbb Z).
\end{equation}
Consider
an initial condition of the form
\eqref{def:InitialCondLeviCivita} and call $(z(t),w(t))$ the corresponding
orbit under the flow of \eqref{def:HamLC:quadratic}
\begin{lem}
Fix $\varpi>0$ small and  a closed interval 
$I\subset (0, 2\sqrt{2}+3)$. Then for $\mu$ small
enough and
$\xi$ with $1/(2\xi^{2})\in I$, after time 
\[ T=-\mathrm{arcsinh}\left(\frac{\widetilde\rr}{\de_\mu}\right)=-\log
\frac{2\widetilde\rr}{\de_\mu}+O(\de_\mu^2)<0\]
the orbit satisfies $(z(T),w(T))\in  \widetilde\Gamma_{1,\varpi}^0$
 and the image contains the curve
\begin{equation}\label{eq:LC:Linear}
\left\{(w,z)\in \widetilde\Gamma_{1,\varpi}^0: \quad 
\mathrm{arg}(w)=\mathrm{arg}(z)-\pi+O(\mu^{1/4}), \quad \mathrm{arg}(z)\in
\left[\frac\pi
2+\varpi,\frac{3\pi}{2}-\varpi\right]\right\}.
\end{equation}
\end{lem}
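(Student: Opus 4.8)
The plan is to integrate the quadratic Hamiltonian \eqref{def:HamLC:quadratic} explicitly, since its flow is linear, and then read off the transit time and the angular relations directly from the closed form. First I would write Hamilton's equations for $K^0_\rho(z,w)=\frac12(|w|^2-|z|^2)-\frac12\mu^{1/2}\xi^2$, treating $z$ as the configuration variable and $w$ as its conjugate momentum. With the standard symplectic form this gives the decoupled linear system $\dot z=w$, $\dot w=z$ (equivalently $\ddot z=z$), confirming the resonant saddle at the origin with eigenvalues $\pm1$. With the initial data \eqref{def:InitialCondLeviCivita}, namely $z(0)=0$ and $w(0)=\delta_\mu e^{i\psi}$, the unique solution is
\[
z(t)=\delta_\mu e^{i\psi}\sinh t,\qquad w(t)=\delta_\mu e^{i\psi}\cosh t.
\]
This datum already lies on $\{K^0_\rho=0\}$ (since $|w(0)|=\delta_\mu$ and $\delta_\mu^2=\mu^{1/2}\xi^2$), and the flow preserves energy, so $|w(t)|^2-|z(t)|^2\equiv\delta_\mu^2$ throughout.

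Next I would solve for the time $T$ at which the orbit reaches the section $\widetilde\Gamma_{1,\varpi}^0$, whose defining constraint is $|z|=\widetilde\rho$. Since $T<0$ we have $\sinh T<0$, and $\delta_\mu|\sinh T|=\widetilde\rho$ forces $\sinh T=-\widetilde\rho/\delta_\mu$, i.e.\ $T=-\mathrm{arcsinh}(\widetilde\rho/\delta_\mu)$. Because $\widetilde\rho=\sqrt{\rho/2}=O(1)$ while $\delta_\mu=\mu^{1/4}\xi\to0$, the argument of arcsinh is large and the expansion $\mathrm{arcsinh}(x)=\log(2x)+O(x^{-2})$ yields the stated form $T=-\log(2\widetilde\rho/\delta_\mu)+O(\delta_\mu^2)$. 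The energy relation then gives $|w(T)|=\sqrt{\widetilde\rho^2+\delta_\mu^2}=\widetilde\rho+O(\mu^{1/2})$, so $(z(T),w(T))\in\widetilde\Gamma_{1,\varpi}^0$ as required.

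The angular relation is the heart of the statement and falls out of the signs. At $t=T$ we have $\sinh T<0$ and $\cosh T>0$, hence $z(T)=-\widetilde\rho\,e^{i\psi}$ and $w(T)=\delta_\mu\cosh T\,e^{i\psi}$ with a positive real prefactor; therefore $\arg z(T)=\psi+\pi$ and $\arg w(T)=\psi$, giving the exact identity $\arg w(T)=\arg z(T)-\pi$. The stated $O(\mu^{1/4})$ is thus slack that is automatically satisfied. Finally, letting the free phase $\psi$ of the collision initial condition range over the corresponding interval makes $\arg z(T)=\psi+\pi$ sweep the arc that parameterizes $\widetilde\Gamma_{1,\varpi}^0$, so that the image of the collision set is precisely the advertised curve with $\arg z$ covering $[\frac\pi2+\varpi,\frac{3\pi}{2}-\varpi]$.

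I do not expect a genuine obstacle here, as the system is linear and explicitly integrable; the only points requiring care are getting the sign conventions right in Hamilton's equations so that the saddle is hyperbolic with correctly oriented stable/unstable directions, tracking that $T<0$ corresponds to flowing backward from collision out to the section, and reconciling the factor of two relating the angular ranges of $z$ and of $\wt u=2z^2$ when matching the parameterization of $\widetilde\Gamma_{1,\varpi}^0$. The genuine work is deferred to the subsequent step, where this exact linear picture must be shown to persist for the full Hamiltonian $K_\rho$, which differs from $K^0_\rho$ by $O(\mu)$ in $C^3$ and whose perturbation must be controlled over the long time $|T|\sim\log(1/\mu)$.
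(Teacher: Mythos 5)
Your proof is correct and takes essentially the same route as the paper's: explicit integration of the linear system associated to \eqref{def:HamLC:quadratic}, yielding $z(t)=\delta_\mu e^{i\psi}\sinh t$, $w(t)=\delta_\mu e^{i\psi}\cosh t$, from which the transit time $T=-\mathrm{arcsinh}(\widetilde\rho/\delta_\mu)$ and the exact relation $\arg w(T)=\arg z(T)-\pi$ follow by the sign of $\sinh T$. In fact your write-up is more detailed than the paper's two-line argument, which merely states the explicit solution and asserts the conclusion, leaving the arcsinh expansion, the energy bookkeeping for $|w(T)|$, and the sweep over $\psi$ implicit.
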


\begin{proof}The proof of this lemma is a direct consequence of the integration of the
linear system associated to Hamiltonian \eqref{def:HamLC:quadratic}. Indeed, the trajectory associated to this system with initial condition \eqref{def:InitialCondLeviCivita} is given by
\[
 \begin{split}
  z(t)&=\delta_\mu e^{i\psi}\sinh t\\
  w(t)&=\delta_\mu e^{i\psi}\cosh t
 \end{split}
\]
Thus taking $T<0$ as stated in the lemma the orbits reach $\widetilde\Gamma_{1,\varpi}^0$ and satisfy \eqref{eq:LC:Linear}
\end{proof}

The next lemma shows that if one considers the full Hamiltonian \eqref{lc-eq}, 
the same is true with a small error.  Call $(z(t), w(t))$ to the orbit with initial condition of the form  \eqref{def:InitialCondLeviCivita} under the flow associated to  \eqref{lc-eq}.
\begin{lem}\label{collisional-cover}
Fix $\varpi>0$  small,  a closed interval $I\subset (0, 2\sqrt{2}+3)$ and 
an initial condition of the form \eqref{def:InitialCondLeviCivita}.  
Then, for $\mu$ small enough and $\xi$ with 
$1/(2\xi^{2})\in I$, there exists a time $T<0$ (depending on the initial 
condition), satisfying
 \[\left|T+\log \frac{2\widetilde\rr}{\de_\mu}\right|\leq C\mu^{1/4}\]
for some $C>0$ independent of $\mu$, such that 
$(z(T), w(T))\in \widetilde\Gamma_{1,\varpi}^0$.

Moreover, the intersection between  $ \widetilde\Gamma_{1,\varpi}^0$ and  the union of
orbits with  initial conditions $\eqref{def:InitialCondLeviCivita}$ with any
$\psi\in [0,2\pi]$  contains a continuous curve 
$(z,w)=(\gamma_1(\psi), \gamma_2(\psi))$ which satisfies
\[\mathrm{arg }\, z(\psi_1)=\frac{\pi}{ 2}+\varpi\qquad  , \qquad 
\mathrm{arg }\,z(\psi_2)=\frac{3\pi}{2}-\varpi\]
for some $\psi_1<\psi_2$, $\psi_1,\psi_2\in  [0,2\pi]$, and 
\[ \left|
\mathrm{arg }\gamma_1(\psi)-
\mathrm{arg }\gamma_2(\psi)-\pi\right|\leq O\left(\mu^{1/4}\right).\]
\end{lem}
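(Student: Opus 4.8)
The plan is to regard the full regularized system \eqref{lc-eq} as a perturbation of the explicitly solvable quadratic saddle \eqref{def:HamLC:quadratic}, whose backward orbits issued from the collision data \eqref{def:InitialCondLeviCivita} were computed exactly in the previous lemma, and to promote that description to the full flow by a Gronwall-type shadowing estimate, closing with the intermediate value theorem to extract the covering curve. Throughout I write $R(t)=\max(|z(t)|,|w(t)|)$ and $\delta_\mu=\mu^{1/4}\xi$. Note first that an orbit starting at \eqref{def:InitialCondLeviCivita} stays on $K_\rho^{-1}(0)$ for the full Hamiltonian as well, since every term of $K_\rho-K^0_\rho$ carries a factor $|z|^2$ and so vanishes on $\{z=0\}$; the same factor shows that the Hamiltonian vector field of $K_\rho-K^0_\rho$ is $O(\mu^{1/2}R^3)$ on $B(0,O(\widetilde\rho))$, the leading contribution coming from $-2i\xi^2\mu^{1/2}|z|^2(\bar z w-\bar w z)$ and the remaining $O(\mu)$ terms being of higher order in $R$. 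The decisive feature is that this perturbation is \emph{cubic} in $R$, hence extremely small near the collision, where the backward orbit spends most of its time.

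First I would run the shadowing estimate. Let $(z(t),w(t))$ and $(z^0(t),w^0(t))$ be the full and quadratic orbits from the same datum \eqref{def:InitialCondLeviCivita}, with error $e(t)=(z-z^0,w-w^0)$. Writing the full system as the linear saddle $\dot z=w,\ \dot w=z$ plus the perturbation and applying variation of constants against the hyperbolic propagator (of norm $\lesssim e^{|t-s|}$) gives, for $T<0$,
\[
\|e(T)\|\ \lesssim\ \int_{T}^{0} e^{|T-s|}\,\mu^{1/2}R(s)^3\,ds .
\]
Along the quadratic orbit $R^0(s)\sim\delta_\mu e^{|s|}$, so under the bootstrap assumption that $e$ stays relatively small the integral is $\lesssim \mu^{1/2}\delta_\mu^3 e^{3|T|}$. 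Since $|z|$ reaches $\widetilde\rho$ when $e^{|T|}\sim\widetilde\rho/\delta_\mu$, this is $\lesssim\mu^{1/2}\widetilde\rho^3$, whence the relative error is $O(\mu^{1/2})$ and the bootstrap closes for $\mu$ small (with $\rho$ fixed). Consequently every argument along the orbit is displaced by at most $O(\mu^{1/2})=O(\mu^{1/4})$ from its quadratic value, and in particular $\arg w=\arg z-\pi+O(\mu^{1/4})$ persists for the full flow. To pin down the crossing time, observe that at the quadratic endpoint $\bar z w=|z||w|e^{-i\pi}<0$, so $\frac{d}{dt}|z|^2=2\,\mathrm{Re}(\bar z w)+O(\mu^{1/2}R^4)<0$ there; thus $|z|$ increases monotonically and transversally as $t$ decreases through the crossing, and the implicit function theorem yields a crossing time $T=T(\psi)<0$, continuous (indeed smooth) in $\psi$, with $(z(T),w(T))\in\widetilde\Gamma_{1,\varpi}^0$. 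Comparing with the quadratic crossing time and dividing the position error by the crossing speed $\sim\widetilde\rho$ gives $|T+\log(2\widetilde\rho/\delta_\mu)|\le C\mu^{1/4}$.

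Finally, the covering follows from the intermediate value theorem. Set $(\gamma_1(\psi),\gamma_2(\psi))=(z(T(\psi)),w(T(\psi)))$, a continuous curve by the smoothness of $T(\psi)$ and continuous dependence on $\psi$. Because $\arg\gamma_1(\psi)=\psi+\pi+O(\mu^{1/4})$ is continuous and sweeps out essentially a full turn as $\psi$ runs over $[0,2\pi]$, for $\mu$ small the intermediate value theorem supplies $\psi_1<\psi_2$ with $\arg\gamma_1(\psi_1)=\tfrac\pi2+\varpi$ and $\arg\gamma_1(\psi_2)=\tfrac{3\pi}2-\varpi$, the $O(\mu^{1/4})$ error being harmless for these interior targets. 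The per-orbit relation $\arg w=\arg z-\pi+O(\mu^{1/4})$ then gives $|\arg\gamma_1(\psi)-\arg\gamma_2(\psi)-\pi|=O(\mu^{1/4})$, completing the construction.

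The main obstacle is the logarithmically long transit time $|T|\sim\tfrac14|\log\mu|$ spent near the hyperbolic fixed point: the saddle a priori amplifies errors by $e^{|T|}\sim\mu^{-1/4}$, which would ruin any fixed power of smallness. The estimate survives only because the perturbation is cubic in $R$ and hence negligible precisely where the orbit lingers (near collision); making this cancellation quantitative in the variation-of-constants integral, and verifying the shadowing bootstrap throughout the transit, is the technical heart. Transversality of the crossing, continuity of $T(\psi)$, and the final intermediate value argument are then routine.
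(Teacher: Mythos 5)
Your proposal is correct and follows essentially the same route as the paper: both treat \eqref{lc-eq} as a perturbation of the quadratic saddle \eqref{def:HamLC:quadratic}, run a variation-of-constants/bootstrap estimate over the backward transit (the paper after first diagonalizing via $X_i=(z_i+w_i)/\sqrt2$, $Y_i=(z_i-w_i)/\sqrt2$ and with the cruder bound $\mu^{1/2}O_3=O(\mu^{1/2})$ in the unit ball, which yields the $O(\mu^{1/4})$ error, whereas your tracking of the cubic factor $R(s)^3$ gives the sharper $O(\mu^{1/2})$), and close with the argument relation $\mathrm{arg}\,w=\mathrm{arg}\,z-\pi+O(\mu^{1/4})$ and an intermediate-value argument in $\psi$. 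Your explicit transversality/implicit-function step for the crossing time $T(\psi)$ is a detail the paper leaves implicit, but it does not change the method.
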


\begin{proof}
We prove the lemma by using the variation of constants formula. Consider the
symplectic change of coordinates
\begin{equation}\label{def:DiagLC}
 X_i=\frac{z_i+w_i}{\sqrt{2}},\qquad  Y_i=\frac{z_i-w_i}{\sqrt{2}},\qquad i=1,2,
\end{equation}
which transforms $K_\rho^0$ into 
\[
 \widetilde
K_\rho^0=\frac12\left(X_1Y_1+X_2Y_2\right)-\frac12\mu^{1/2}\xi^2+\mu^{1/2}
O_4(X,Y).
\]
We consider the corresponding initial condition 
$X_0=\frac{\de_\mu e^{i\psi}}{\sqrt{2}}$, 
$Y_0=\frac{-\de_\mu e^{i\psi}}{\sqrt{2}}$ and 
the equations associated to $\widetilde K_\rho^0$, which are of the form
\[
\begin{split}
\dot X&=\ X+\mu^{1/2}O_3(X,Y)\\ 
\dot Y&=-Y+\mu^{1/2}O_3(X,Y).
\end{split}
\]
We obtain estimates by using a bootstrap argument. Call $T^*<0$ the first time
such that $(X(t),Y(t))$ leave the ball of radius one (if it does not exist, set 
$T^*=-\infty$). Then, using the variation of constants formula, we have that
for $t\in (T^*,0)$,
\[
\begin{split}
 X(t)=\ e^{t}\left(X_0+O(\mu^{1/2})\right)\\
 Y(t)\ =\ e^{-t}Y_0+O(\mu^{1/2}).
  \end{split}
\]
Using the value of $X_0$ and $Y_0$,  there exists $T<0$ depending on
$(X_0,Y_0)$ satisfying that 
\[\left|T+\log \frac{2\widetilde\rr}{\de_\mu}\right|\leq C\mu^{1/2}\]
for some $C>0$
independent of $\mu$ (but depending on $\rr$) such that  the corresponding
$(z(T), w(T))$ (by \eqref{def:DiagLC}) belongs to
$\widetilde\Gamma_2^0$ and satisfy
\[
\mathrm{arg}\,z(T)=\psi+\pi+O\left(\mu^{1/4}\right),\qquad
\mathrm{arg}\,w(T)=\psi+O\left(\mu^{1/4}\right).
\]
This implies the statements of the lemma.
\end{proof}

Undoing the changes of coordinates \eqref{def:scaling} and \eqref{lc-tran}, 
we can analyze the orbits leading to collision for the Hamiltonian \eqref{r3b-e}.

\begin{cor}\label{coro:cross2}
For $\varpi>0$ small there exists a curve 
$\Upsilon=\{(u,v)=(u(\psi),v(\psi))\subset\mathbb R^4: \psi\in J\}$ where $J \subset\mathbb R$ is an interval such that: 
\begin{enumerate}
\item The projection of 
$\Upsilon$ onto the $u$ variable contains the set
\[
\Gamma'_{1,\varpi}=\left\{\rho \mu^{1/2} e^{i\theta}\cdot\frac{v_{0}}{|v_{0}|}
\left|\  \theta\in
 \left[\frac{\pi}{2}+\varpi,\frac{3\pi}{2}-\varpi\right]\right.\right\},
\] 
\item It satisfies
\[
\begin{split}
u(\psi)&=\rho \mu^{1/2} e^{2i\psi}\left(1+O\left(\mu^{1/4}\right)\right)\\
v(\psi)&=-\xi^{-1} e^{2i\psi}\left(1+O\left(\mu^{1/4}\right)\right)
\end{split}
\]
\item The orbits of  the  Hamiltonian $H_\mu$ in \eqref{r3b-e} with initial
condition in $\Upsilon$ hit a collision.
\end{enumerate}
\end{cor}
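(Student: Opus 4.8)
The plan is to obtain Corollary~\ref{coro:cross2} by simply transporting the conclusion of Lemma~\ref{collisional-cover} back through the two changes of coordinates that were introduced to set up the Levi--Civita regularization. First I would take the continuous curve $(z,w)=(\gamma_1(\psi),\gamma_2(\psi))$ produced in Lemma~\ref{collisional-cover}, which lies in $\widetilde\Gamma_{1,\varpi}^0$ and whose orbits (under the flow of \eqref{lc-eq}, run backward from the collision points \eqref{def:InitialCondLeviCivita}) hit $\{z=0\}$, i.e.\ hit a collision. The interval $J$ in the statement is exactly the $\psi$-interval $[\psi_1,\psi_2]$ from that lemma, so I define $\Upsilon$ as the image of this curve after undoing \eqref{lc-tran} and then \eqref{def:scaling}.

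Next I would compute the effect of the inverse changes on the explicit asymptotics. By Lemma~\ref{collisional-cover} we have $|z|=\widetilde\rho=\sqrt{\rho/2}$ and $\mathrm{arg}\,z(\psi)=\psi+\pi+O(\mu^{1/4})$, together with $\mathrm{arg}\,w(\psi)=\psi+O(\mu^{1/4})$ and $|w|=\widetilde\rho+O(\mu^{1/2})$. Applying $\wt u=2z^2$ gives $\wt u=2\widetilde\rho^2 e^{2i(\psi+\pi)}(1+O(\mu^{1/4}))=\rho\, e^{2i\psi}(1+O(\mu^{1/4}))$, and then the scaling $u=\mu^{1/2}\wt u$ yields $u(\psi)=\rho\mu^{1/2}e^{2i\psi}(1+O(\mu^{1/4}))$, which is item~(2) for $u$. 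For the momentum I use $v=w/(\xi\bar z)$: since $|w|\sim|z|=\widetilde\rho$ and $\mathrm{arg}\,w-\mathrm{arg}\,\bar z=\psi-(-(\psi+\pi))+O(\mu^{1/4})=2\psi+\pi+O(\mu^{1/4})$, one gets $v(\psi)=\xi^{-1}e^{i(2\psi+\pi)}(1+O(\mu^{1/4}))=-\xi^{-1}e^{2i\psi}(1+O(\mu^{1/4}))$, which is item~(2) for $v$. Item~(1) then follows immediately: as $\psi$ ranges over $[\psi_1,\psi_2]$ the argument $2\psi$ of $u$ sweeps (using the endpoint conditions $\mathrm{arg}\,z(\psi_i)=\tfrac\pi2+\varpi$ and $\tfrac{3\pi}{2}-\varpi$ of Lemma~\ref{collisional-cover}, recalling $\mathrm{arg}\,u=2\,\mathrm{arg}\,z$) the whole interval $[\tfrac\pi2+\varpi,\tfrac{3\pi}{2}-\varpi]$, so $\pi_u\Upsilon\supset\Gamma'_{1,\varpi}$, matching \eqref{def:Gamma2:Scaled}--\eqref{def:Gamma1'} after rescaling by $\mu^{1/2}$.

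Finally, item~(3) is a matter of checking that the regularization is faithful on the relevant piece of phase space. The orbits of Lemma~\ref{collisional-cover} reach $\{z=0\}$ in the $K_\rho$ picture; since the Levi--Civita map \eqref{lc-tran} and the time reparametrization identify $K_\rho^{-1}(0)\setminus\{z=0\}$ with $\widetilde H_\rho^{-1}(1/2\xi^2)\setminus\{u=0\}$ as flows (as stated just after \eqref{lc-tran}), and the point $z=0$ is precisely where the reparametrized time regularizes the singularity $u=0$, these orbits correspond to genuine orbits of $\widetilde H_\rho$ that run into the collision $\wt u=0$ in finite original time. Undoing \eqref{def:scaling} transports this back to $H_\mu$, so the orbits of $\Upsilon$ hit $u=0$, i.e.\ a Jupiter--Asteroid collision. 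I expect the main (though still routine) obstacle to be bookkeeping the $O(\mu^{1/4})$ errors consistently through the nonlinear maps $\wt u=2z^2$ and $v=w/(\xi\bar z)$ and confirming that the energy-level constraint and the relation $\widetilde H_\rho(\wt u,v)=1/2\xi^2$ keep $\xi\sim O(1)$ uniformly for $1/(2\xi^2)\in I$, so that dividing by $\xi\bar z$ does not inflate the error terms; once the leading-order arguments are pinned down as above, all three items drop out directly.
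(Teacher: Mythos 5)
Your proposal is correct and follows essentially the same route as the paper, which proves the corollary in one line ("undoing the changes of coordinates \eqref{def:scaling} and \eqref{lc-tran}") by transporting Lemma \ref{collisional-cover} back through the Levi--Civita map and the scaling; your computations of the arguments and moduli of $u=\mu^{1/2}\cdot 2z^2$ and $v=w/(\xi\bar z)$ fill in exactly the intended bookkeeping. The only wobble is the angle accounting at the endpoints (the factor-of-two/$s_0$ shift between $\arg z$ and $\arg u$), but that imprecision is inherited from the paper's own statement of Lemma \ref{collisional-cover} and does not affect the argument.
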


\begin{figure}
\begin{center}
\includegraphics[width=10cm]{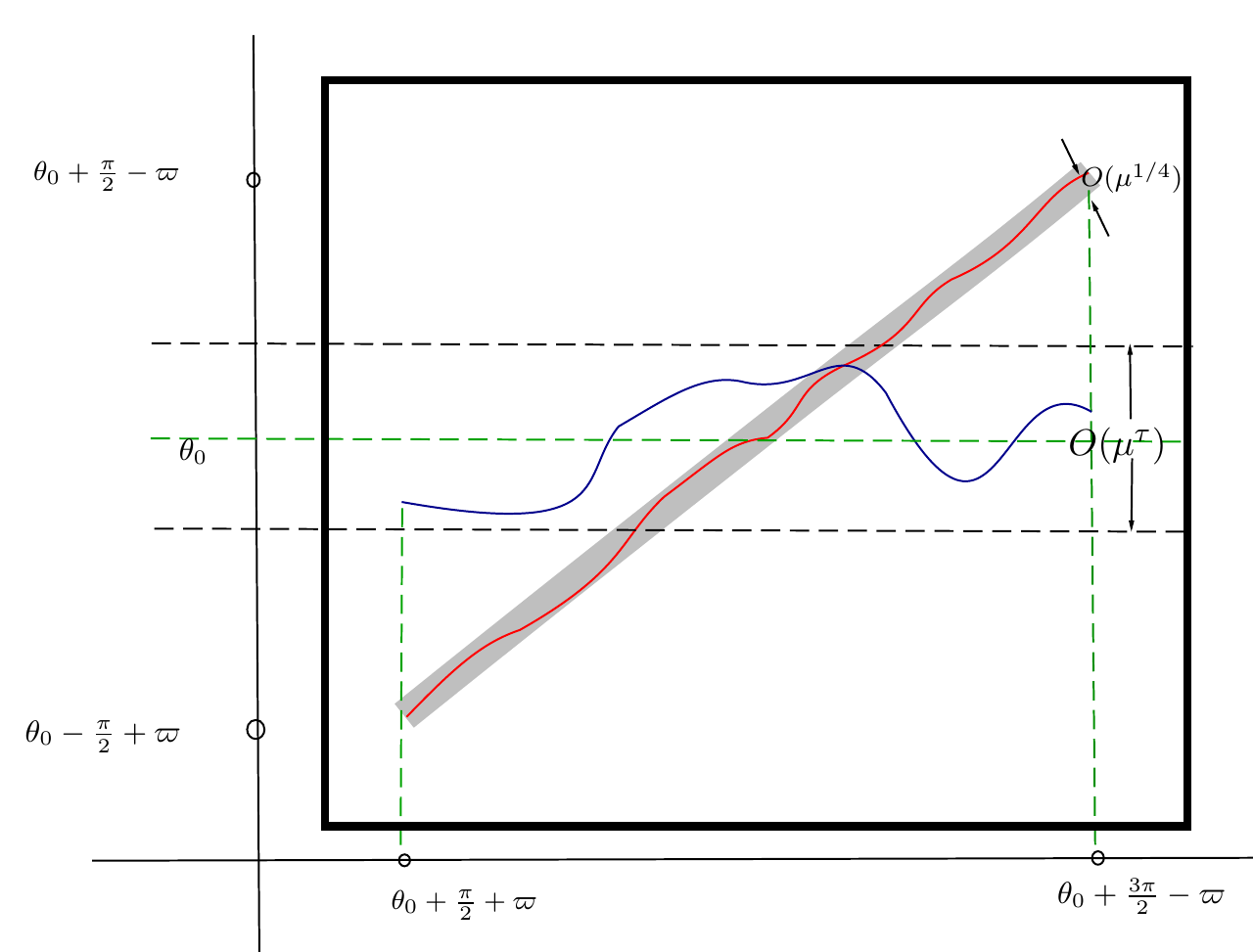}
\caption{The Blue curve is the projection of $\mathcal S_1$ obtained in Proposition \ref{prop:r2-region} onto the $(\mathrm{arg}(u), \mathrm{arg}(v))$ plane whereas the red curve is the projection onto the same plane of the curve $\Upsilon$ obtained in Corollary \ref{coro:cross2}. We use the notation $\theta_0=\mathrm{arg}(v_0)$.
}
\label{cross}
\end{center}
\end{figure}

Proposition \ref{prop:r2-region} and Corollary \ref{coro:cross2} imply 
Theorem \ref{main}. Indeed, it only remains to prove that the segment $\cS_1$ 
obtained in Proposition \ref{prop:r2-region} and the segment $\Upsilon$ 
obtained in Corollary \ref{coro:cross2} intersect. Note that both curves project 
onto $\Gamma_{1,\varpi}$ in \eqref{covering-gamma-1} and belong to 
the same level of energy of the Hamiltonian $H_\mu$ in \eqref{r3b-e}. 
Therefore, these two curves belong to the two dimensional surface
\[
 \mathcal M_h=\left\{(u,v)\in\mathbb R^4: \ 
|u|=\rho\mu^{1/2},\quad H_\mu(u,v)=h\right\}
\]
for some $h\in\mathbb R$. Therefore, to complete the proof of Theorem \ref{main}, we only need to prove that the two curves intersect in this 2 dimensional surface. To parameterize $\mathcal M_h$, taking into account that 
$|u|=\rho\mu^{1/2}$ and that this implies
\[
h=H_\mu(u,v)=\frac{|v|^2}{2}+O\left(\mu^{1/2}\right),
\]
one can consider as  variables the arguments of $u$ and $v$. In these coordinates, the two continuous curves $\cS_1$ and $\Upsilon$ satisfy the following:
\begin{itemize}
 \item By Proposition \ref{prop:r2-region}, the projection onto the argument of $u$ of the curve $\cS_1$ contains the interval \[\displaystyle\left[\mathrm{arg}(v_0)+\frac{\pi}{2}+\frac{\varpi}{2},\mathrm{arg}(v_0)+\frac{3\pi}{2}-\frac{\varpi}{2}\right]\] whereas the $v$ component satisfies $\mathrm{arg}(v)=\mathrm{arg}(v_0)+O(\mu^\tau)$. That is, in the plane $(\mathrm{arg}(u),\mathrm{arg}(v))$ is a  curve close to horizontal.
 \item By Corollary \ref{coro:cross2}, the projection onto the argument of $u$ of the curve $\Upsilon$ contains $\displaystyle\left[\mathrm{arg}(v_0)+\frac{\pi}{2}+\frac{\varpi}{2},\mathrm{arg}(v_0)+\frac{3\pi}{2}-\frac{\varpi}{2}\right]$. Moreover, $\Upsilon$ satisfies \[\mathrm{arg}(v)=\mathrm{arg}(u)-\pi+O(\mu^{1/4}).\]
\end{itemize}
Since the two curves are continuous, they must intersect. This completes the proof of Theorem \ref{main}.

\section{Proof of Theorem \ref{main1}}\label{sec:Wandering}
To prove Theorem \ref{main1} we use the ideas developed in Section \ref{sec:R1} to analyze the region $R_1$. We only need to modify the density argument from the one given in Section \ref{sec:DensityDelaunay}. As explained in Section \ref{sec:BackToCartesian},  the change from Delaunay 
to the Cartesian coordinates $(u,v)$ is a diffeomorphism with uniform bounds 
independent of $\mu$. Therefore, it is enough to prove Theorem \ref{main1} in Delaunay coordinates.

Theorem \ref{main1} is a consequence of the following lemma. We use the notation of Section \ref{sec:R1}: we consider the tori $\mathcal{T}$ given by Corollary \ref{corollary:KAM:eps-close} and we denote by $B_{\mathcal{T}}^\pm$ the balls of radius $C\mu^\tau$ in these tori centered at collisions (see \eqref{def:PunctureInTorus}). The Hamiltonians $H_\mu$ in \eqref{def:HamPolars} (expressed in Delaunay coordinates) and $\widehat H_\mu$  in \eqref{i-i-h}
coincide away from $B_{\mathcal{T}}^\pm$.
\begin{lem}\label{lemma:DensityDelaunayrecurrent}
Fix $\de>0$ small, there exists $\mu_0>0$ 
depending on $\de$, such that the following holds 
for any $\mu\in (0,\mu_0)$:

For any ${\mathbb  X}\in\cV_\de$, there exists a invariant torus $\cT$ 
obtained in Corollary \ref{corollary:KAM:eps-close} and 
a point $\mathbb Y\in \cT$ satisfying  
$\mathrm{dist}(\mathbb Y, \mathbb X)\leq O(\mu^{\frac{1}{20}})$, such that 

\begin{enumerate}
\item {\sf (Away from the collision)} There exists $0<T(\mathbb Y)\lesssim O(\mu^{-\frac{1}{10}})$, such that for all $t\in (0, T(\mathbb Y))$, 
$\phi_{\widehat  H_\mu}(t,z)\not\in 
\mathcal B_{\mathcal T}^+\cup \mathcal B_{\mathcal T}^-$;
Therefore, we have 
\[
\phi_{\widehat H_\mu}(t,\mathbb Y)=
\phi_{H_\mu}(t,\mathbb Y),\quad \text{ for all }\,
 t\in[0,T(\mathbb Y)].
\]

\item {\sf (Recurrence)} $\mathrm{dist}( \phi_{H_\mu}(T(\mathbb Y), \mathbb Y), \mathbb X)\leq O(\mu^{\frac{1}{20}})$.

\item {\sf (Close to collision)} There exists $T'(\mathbb Y)\in (0,T(\mathbb Y)]$, such that 
\[
\mathrm{dist}(\mathbb \phi_{H_\mu}(T'(\mathbb Y), \mathbb Y), \cB_\cT^\pm)\leq O(\mu^{\frac{1}{20}}).
\]
 \end{enumerate}
\end{lem}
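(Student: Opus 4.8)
The strategy is to re-run the spreading argument of Section \ref{sec:DensityDelaunay}, but now to read it as a quantitative recurrence statement for a single forward orbit. As in Section \ref{sec:BackToCartesian}, since the Delaunay-to-Cartesian change is a diffeomorphism with $\mu$-independent bounds, it is enough to work in Delaunay coordinates. First I would invoke Corollary \ref{corollary:KAM:eps-close} to pick a KAM torus $\cT$ of $\wh H_\mu$ meeting the $O(\mu^{1/20})$-neighborhood of $\mathbb X$, and restrict attention to $\cT$. After the $\eps^{1/2}$-close-to-identity conjugacy furnished by Theorem \ref{kam} (recall $\eps=\mu^{1-6\tau}=\mu^{1/10}$, so $\eps^{1/2}=\mu^{1/20}$), the induced dynamics on $\cT$ is the rigid rotation \eqref{def:LinearFlow} with a constant-type Diophantine frequency $\om\in B_\gamma$, $\gamma=C_1\mu^{1/20}$. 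Every estimate claimed in the lemma lives at the scale $O(\mu^{1/20})$, hence is insensitive both to this conjugacy and to the difference between the $\mathbb X$ of the statement and its projection onto $\cT$; I therefore replace $\mathbb X$ by a nearby point of $\cT$ and look for $\mathbb Y$ on $\cT$.

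Next I would produce the three assertions from the density of the base rotation. Exactly as in Section \ref{sec:DensityDelaunay}, the non-homogeneous Dirichlet Theorem \ref{thm:Cassels} with $X=\gamma^{-1}$ shows that the returns $a_q=\pi_\ell\mathbb Y+q\om \ (\mathrm{mod}\ 1)$ to a fixed section are $\gamma$-dense after $q^*=[\gamma^{-2}]=O(\mu^{-1/10})$ steps, each return costing time $O(1)$; thus the forward orbit $\{\phi_{\wh H_\mu}(t,\mathbb Y):0\le t\le T^*\}$ with $T^*=O(\mu^{-1/10})$ is $O(\mu^{1/20})$-dense on $\cT$. This density gives at once a recurrence time $T\le T^*$ at which the orbit returns within $\gamma$ of $\mathbb Y$, and hence within $O(\mu^{1/20})$ of $\mathbb X$ (part (2), with $T(\mathbb Y)=O(\mu^{-1/10})$), together with an earlier instant $T'\le T$ at which some return $a_{q'}$ lands within $\gamma$ of one of the marked collisions (the seed for part (3)).

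The heart of the matter, and the step I expect to be the main obstacle, is part (1): the orbit must approach a collision to within $O(\mu^{1/20})$ while never entering the balls $\cB_\cT^\pm$ of radius $C\mu^{3/20}$ from \eqref{def:PunctureInTorus}, for otherwise the flows of $\wh H_\mu$ and $H_\mu$ would cease to agree before time $T$. This is precisely the tension already met in the two-collision part of Lemma \ref{lemma:DensityDelaunay}, and it is resolved by the two-sided control coming from the constant-type condition. On the one hand, the returns are pairwise well separated: for $0<|q_1-q_2|\le q^*$ one has $\|(q_1-q_2)\om\|\ge \gamma/q^*\ge \gamma^3=C_1^3\mu^{3/20}$ by \eqref{eq:dioph-gamma} and \eqref{def:MaximalTime}, so at most one of the $q^*$ returns can lie within $4C\mu^{3/20}$ of any given collision. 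On the other hand, the set of bad base-points $\pi_\ell\mathbb Y$ for which some return with $q\le q^*$ does fall within $4C\mu^{3/20}$ of a collision is a union of $O(q^*)$ intervals of total length $O(q^*\mu^{3/20})=O(C_1^{-2}\mu^{1/20})$; taking $C_1$ large enough (which merely degrades the still-$O(\mu^{1/20})$ density of tori in Corollary \ref{corollary:KAM:eps-close}) makes this length a small fraction of the $\gamma=C_1\mu^{1/20}$-window available around $\mathbb X$. I can therefore choose $\mathbb Y$ within $O(\mu^{1/20})$ of $\mathbb X$ whose forward returns avoid the $4C\mu^{3/20}$-neighborhoods of both collisions up to step $q^*$; since near a collision the flow moves with $O(1)$ speed, the continuous orbit between returns stays at a comparable distance, so it never meets $\cB_\cT^\pm$ on $(0,T)$. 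For this good $\mathbb Y$ the density return $a_{q'}$ of the previous paragraph is automatically trapped in the annulus $4C\mu^{3/20}<\|a_{q'}-c\|\le\gamma$, which is exactly the $O(\mu^{1/20})$-closeness to $\cB_\cT^\pm$ demanded by part (3).

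Finally, part (1) yields $\phi_{H_\mu}(t,\mathbb Y)=\phi_{\wh H_\mu}(t,\mathbb Y)$ on $[0,T]$, so the recurrence and near-collision statements proved for the modified flow transfer verbatim to $H_\mu$, and the $O(\mu^{1/20})$ distortion of the conjugacy is absorbed into the stated bounds. The only bookkeeping point to watch is the ordering $0<T'\le T\le T^*$: it suffices to let $T'$ be the first near-collision return and $T$ the first recurrence return after it, which by density occurs within a further $O(\mu^{-1/10})$ of time, leaving $T(\mathbb Y)=O(\mu^{-1/10})$ as required. Passing back to Cartesian coordinates via the diffeomorphism of Section \ref{sec:BackToCartesian} then gives Lemma \ref{lemma:DensityDelaunayrecurrent} and, with it, Theorem \ref{main1}.
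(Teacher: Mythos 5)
Your proposal is correct, and it follows the paper's general skeleton — pass to a KAM torus of $\wh H_\mu$ from Corollary \ref{corollary:KAM:eps-close}, reduce to a rigid rotation with constant-type frequency $\omega\in B_\gamma$, analyze returns to a section, and invoke Theorem \ref{thm:Cassels} with the same exponents $\tau=3/20$, $\gamma\sim\mu^{1/20}$, $q^*\sim\gamma^{-2}$ — but it resolves the crucial avoidance step (your part (1)) by a genuinely different mechanism. The paper does not select $\mathbb Y$ generically: it anchors the construction at an explicit point $z_0$ placed at distance $4C\mu^\tau$ from one collision (shifted to $z_1$ at $10C\mu^\tau$ if needed), so that avoidance of $\cB_\cT^\pm$ for $|q|\le q^*$ follows \emph{deterministically} from the Diophantine lower bound $\|q\omega\|\ge\gamma/|q|\ge 20C\mu^\tau$, with a separate dichotomy argument (either the orbit of $z_0$ misses the second collision ball, or the translated point $z_1$ works, since the Diophantine condition forbids two bad indices) to handle the second collision; the point $\mathbb Y$ is then taken on the dense \emph{backward} orbit of $z_0$, so the near-collision instant of part (3) is simply the passage through $z_0$, at distance $O(\mu^{3/20})$ from the ball. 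You instead run a union-bound over starting points: the set of base points whose first $O(\gamma^{-2})$ returns meet the $4C\mu^\tau$-neighborhoods of either collision has measure $O(q^*\mu^\tau)=O(C_1^{-2}\mu^{1/20})$, a small fraction of the $\gamma=C_1\mu^{1/20}$ window once $C_1$ is large, so a good $\mathbb Y$ exists near $\mathbb X$; part (3) then comes from density trapped in the annulus $4C\mu^{3/20}<\|a_{q'}-c\|\le\gamma$. What each approach buys: yours treats both collisions uniformly and eliminates the paper's case analysis and shifted point, at the cost of a weaker (but still sufficient) near-collision distance $O(\mu^{1/20})$ versus the paper's $O(\mu^{3/20})$, and of needing the harmless constant-tracking you already flag (avoidance over $\sim 4q^*$ returns so that the near-collision return precedes the recurrence return, absorbing the section-offset of $\mathbb X$, and the factor-two in passing from two-sided to one-sided density in Cassels).
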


We devote the rest of the section to prove this lemma. The reasoning follows the same lines as that of Section \ref{sec:DensityDelaunay}. Namely, since the point $\mathbb Y$ considered is contained in one of the KAM tori $\cT$ from  
Corollary \ref{corollary:KAM:eps-close} we need to optimize $\ga$ (see \eqref{eq:dioph-gamma}) so that we get enough density of tori in Corollary \ref{corollary:KAM:eps-close} and strong enough Diophantine condition so that the orbits of $\widehat H_\mu$ are well spread in $\cT$.
\subsection{Proof  of Lemma \ref{lemma:DensityDelaunayrecurrent}}
Fix $\mathbb X\in\cV_{\delta}$ and consider a torus  $\mathcal T$  among the ones given in Corollary \ref{corollary:KAM:eps-close} $\ga$-close ot it with $\ga$ to be determined. We look for a point $\mathbb Y$ in this torus satisfying the statements of Lemma \ref{lemma:DensityDelaunayrecurrent}. To this end, we look for an orbit in $\cT$ spreading densely enough on the torus.

We proceed as in Section \ref{sec:DensityDelaunay}. Corollary \ref{corollary:KAM:eps-close} implies that  $\cT$ is a graph over $(\ell, g)$ and the dynamics on $\cT$ is $\eps^{1/2}=\mu^{(1-6\tau)/2}$-close to the unperturbed one. Moreover, after a $\eps^{1/2}$-close to the identity transformation, the dynamics (projected to the base) is a rigid rotation, which by a time reparamaterization, is given by
\[
(\ell(t),g(t))=(\ell^0+\omega t, g^0+t)
\]
where $\omega\in B_{\ga}$ (see \eqref{eq:dioph-gamma}).

It is enough to analyze the orbits in $\cT$ in these coordinates. We analyze the density of orbits in $\cT$ on the section $\{g=0\}$. Since 
the dynamics is a rigid rotation, the density in the section implies 
the density in the whole torus. 

Proceeding as in Section \ref{sec:DensityDelaunay}, we first assume that each torus has just one collision  and then we adapt the proof to deal with tori having two collisions.\\

\noindent {\bf One collision model case:} \  Consider the point $z_0$ on the same horizontal as the collision $\mathcal C_+$ with $\ell$ coordinate $4C\mu^\tau$ bigger. This point is outside of the puncture $B^+_\cT$ since it has radius $C\mu^\tau$ (see \eqref{def:PunctureInTorus}). By a translation we can assume that $z_0=(0,0)$ and the collision is at $\mathcal C_+=(-4C\mu^\tau, 0)$.

In Section \ref{sec:DensityDelaunay} we have considered the backward orbit of $(0,0)$. Since now we want a non-wandering result, we consider both the forward and backward orbits. We want both of them to cover $\ga$-densely the torus without intersecting the $B^+_\cT$. As explained in Section \ref{sec:DensityDelaunay}, it is enough to consider the intersections of the orbit with $\{g=0\}$ given by $\|q\omega\|$ (see \eqref{def:norm}) for $q=-q^*,\ldots,q^*$ with 
\begin{equation}\label{def:MaxIteratesRec}
 q^*=\left\lceil\frac{1}{20C}\ga\mu^{-\tau}-1\right\rceil.
\end{equation}
The Diophantine condition \eqref{eq:dioph-gamma} implies that 
$\|q\omega\|\geq 20C\mu^\tau$ for $q=-q^*,\ldots,q^*$ and, therefore, none of these iterates belong to $B_\cT^+$. 
Moreover, applying Theorem \ref{thm:Cassels} and  choosing 
\[
 \tau=\frac{3}{20} \qquad \text{ and }\qquad \gamma\sim \mu^{\frac{1}{20}},
\]
one can see (as in Section \ref{sec:DensityDelaunay}) that both the forward and the backward orbits are $O(\ga)$-dense in the torus.

If the torus $\cT$ would have only one collision, this would complete the proof of Lemma \ref{lemma:DensityDelaunayrecurrent}. Indeed, the $O(\ga)$-neighborhood of any point in $\cT$ intersects both the forward and the backward orbit of $z_0$. Since the tori are $\ga$-dense (Corollary \ref{corollary:KAM:eps-close}), for any point $\mathbb X\in\cV_\delta$, there exists a torus $\cT$ $\ga$-close to it and a point $\bY$  which belongs to the just constructed backward orbit on this torus $\cT$ which is also $O(\ga)$-close to $\mathbb X$. If one considers now the forward orbit of $\bY$, after  time $T\sim \ga\mu^{-\tau}\sim\mu^{-1/10}$ there is an iterate of the orbit which is $O(\ga)$-close to $\bY$ and therefore $O(\ga)$-close to $\mathbb X$. Moreover, this orbit has not intersected $B_\cT$.\\

\noindent  {\bf Two collisions case:} \ Now we show that the same reasoning goes through if we include the second collision of the torus. If we add the second collision, there are two possibilities:
\begin{itemize}
\item If the orbit of $z_0$ does not intersect $B_\cT^-$ for the considered times the proof of Lemma \ref{lemma:DensityDelaunayrecurrent} is complete. 
\item If the orbit of $z_0$ does intersect  $B_\cT^-$, we move slightly $z_0$ to have an orbit with the same properties as the previous one and not intersecting either of $B_\cT^\pm$.
\end{itemize}
We devote the rest of the section to deal with the second possibility. We use the same system of coordinates as before, which locates $z_0=(0,0)$ and the first collision at $\mathcal C_+=(-4C\mu^\tau, 0)$. We denote the second collision by $\mathcal C_-=(\ell',g')$. Call $\mathcal C_-'=(\ell'',0)$ the first intersection between $\{g=0\}$ and the backward orbit of $\mathcal C_-$. Since the time to go from one point to the other is independent of $\mu$, it is enough to study the forward and backward orbit of $z_0$ in the section $\{g=0\}$.

By assumption, there exists $q'$ with $|q'|\leq q^*$ such that 
\begin{equation}\label{def:Recu:CollisionHitting}
\|q'\omega-\ell''\|\leq 4C\mu^\tau.
\end{equation}
Then, we consider a new point $z_1=(\ell_1,0)=(10C\mu^\tau,0)$, which is $10C\mu^\tau$ far away from  $z_0$ and $14C\mu^\tau$ far away from the collision $\cC_+$. We will see that the forward and backward orbit of this point $z_1$ intersected with $\{g=0\}$, which is
given by 
\begin{equation}\label{def:TranslatedOrbit}
\|\ell_1+q\omega\|,\quad q=-\hat q^*\ldots \hat q^*\qquad \text{with }\hat q^*=q^*/10,
\end{equation}
does not hit the $4C\mu^\tau$-neighborhoods of $\cC_+$ and $\cC_-'$.

First we prove that the points in \eqref{def:TranslatedOrbit}  are away from the $4C\mu^\tau$ neighborhood of $\cC_+$. Indeed, since $\hat q^*\leq q^*$ we know that $\|q\omega\|\geq 20C\mu^\tau$ for all $q=-\hat q^*\ldots \hat q^*$ (see \eqref{eq:dioph-gamma}). Then, the distance from the collision $\mathcal C_+=(-4C\mu^\tau, 0)$ is 
\[
 \|\ell_1+q\omega+4C\mu^\tau\|\geq \|q\omega\|-\|\ell_1\|-4C\mu^\tau\geq 6C\mu^\tau.
\]

Now it only remains to prove that this orbit does not intersect the $4C\mu^\tau$-neighborhood of $\cC_-'$. 
We look first at the iterate which was too close to collision for $z_0$. That is, $q=q'$ which satisfied \eqref{def:Recu:CollisionHitting}. Then, for the orbit of $z_1$ we have
\[
 \|\ell_1+q'\omega-\ell''\|\geq 10C\mu^\tau-\|q'\omega-\ell''\|\geq 6C\mu^\tau
\]
Now we prove that for all other $q=-\hat q^*\ldots \hat q^*$ with $q\neq q'$ we are also far from collision. Indeed, there assume that there exists 
$q''=-\hat q^*\ldots \hat q^*$ with $q''\neq q'$ such that
\[
 \|\ell_1+q''\omega-\ell''\|\leq 4C\mu^\tau
\]
and we reach a contradiction. Indeed, 
\[
 \|(q'-q'')\omega\|\leq \|q'\omega-\ell''\|+\|\ell_1\|+\|\ell_1+q''\omega-\ell''\|\leq 18C\mu^\tau.
\]
Then, since $\omega\in B_\ga$ (see \eqref{eq:dioph-gamma},
\[
\frac{\ga}{2\hat q^*}\leq \frac{\ga}{|q'-q''|}\leq\|(q'-q'')\omega\|\leq 18C\mu^\tau
\]
This implies that 
\[
 \hat q^*\geq \frac{\ga\mu^{-\tau}}{36C}
\]
Nevertheless, by assumption 
\[
\hat q^*=\frac{q^*}{10}=\frac{1}{10}\left\lceil\frac{1}{20C}\ga\mu^{-\tau}-1\right\rceil.
\]
This completes the proof of Lemma \ref{lemma:DensityDelaunayrecurrent}. Note that  changing the number of forward and backward iterates from $q^*$ in \eqref{def:MaxIteratesRec} to $\hat q^*=q^*/10$ still leads to $\ga$-density of the forward and backward orbits.

\appendix

\section{The Delaunay coordinates}\label{app:Delaunay}

To have a self-contained paper, in this appendix we recall the definition of the Delaunay coordinates.
For $\mu=0$, system (\ref{r3b-e}) becomes (\ref{r3b-0})
\[
H_0(x,y)=\frac{|y|^2}{2}- x^t Jy-\frac{1}{|x|}.
\]
The Delaunay transformation is a symplectic transformation defined by
\[
\Psi(x,y)=(\ell,g,L,G)
\]
under which $H_0(x,y)$ becomes the totally integrable Hamiltonian
\[H_0(L,G)=-\frac{1}{2L^2}-G.
\]
One can construct the change of coordinates $\Psi$ in two steps.  First we take  the usual symplectic transformation  to polar coordinates
\begin{equation}\label{def:polars}
(x_1,x_2,y_1,y_2)=\Psi_1(r,\varphi,R,G)
\end{equation}
defined as 
\[
\left\{
\begin{split}
x_1&=r\cos\varphi \\
x_2&=r\sin\varphi\\
y_1&=R\cos\varphi-\frac{G}{r}\sin\varphi \\
y_2&=R\sin \varphi+\frac{G}{r}\cos \varphi
\end{split}
\right.
\]
The Hamiltonian in (\ref{r3b-0}) becomes
\[
H_0(r,R,\varphi,G)=\frac{R^2}{2}+\frac{G^2}{2r^2}-G-\frac{1}{r}
\]
Recall that $G$ is the angular momentum and itself is a first integral for the 2 body problem.
To obtain the Delaunay coordinates, to obtain Hamiltonian \eqref{0-r-a-a}, we consider a second symplectic transformation
\begin{equation}\label{trans}
(r,\varphi,R,G)=\Psi_2(\ell,g,L,G)
\end{equation}
where
\begin{itemize}
 \item $L=\sqrt a$ where $a$ is the semimajor axis of the ellipse.
 \item $G$ is the angular momentum.
 \item $\ell$ is the mean anomaly.
 \item $g$ is the argument of the perihelion with respect the primaries line.
\end{itemize}

The change of coordinates $\Psi_2$ is not fully explicit. Nevertheless, for some components it can be defined through successive changes of variables (for a more extensive explanation, one can see Appendix B.1 in \cite{FGKR}). For 
the position variables $(r,\varphi)$ one as 
%
\begin{equation}\label{def:rphiDelaunay}
\begin{split}
r=&r(\ell,L,G)=L^2(1-e\cos \mathfrak u(\ell))\\
\varphi&=\phi(\ell,g,L,G)=\mathfrak v(\ell)+g
\end{split}
\end{equation}
where $e=e(L,G)$ is the eccentricity defined in \eqref{def:eccentricity} the two functions $\mathfrak u(\ell)$ 
and $\mathfrak v(\ell)$ are implicitly defined by
\[
 \begin{split}
\ell&=\mathfrak u-e\sin \mathfrak u\\
\tan\frac{\mathfrak v}{2}&=\sqrt{\frac{1+e}{1-e}}\tan\frac{\mathfrak u}{2}.
\end{split}
\]
\vspace{10pt}

\section{Density estimate of the Diophantine numbers of constant type}\label{app:Diophantine}

Consider the set  of all Diophantine numbers with constant type satisfying
\eqref{eq:dioph-gamma}, which we have denoted by $B_\gamma$. We devote this 
appendix to 
prove the density of such set stated in Lemma 
\ref{lemma:densityconstantnumbers}.
%
Without loss of generality, we restrict on the $[0,1]$ interval and we prove 
that $B_\gamma$ is $O(\gamma)-$dense in it. We split the proof in several 
lemmas.
\begin{lem} For any $\gm>0$, there exists a constant $C(\gamma)$ satisfying 
\[\frac 
1\gamma-2\leq 
C(\gamma)\leq\frac 1\gamma\] such that, for any  $\om\in B_\gamma$,  the 
associated 
continuous fraction $\omega=[a_1,a_2,\cdots]$ satisfies
\[0\leq a_i\leq C(\gamma)\quad\text{ for  all }i\in\N.\]
\end{lem}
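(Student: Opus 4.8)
The plan is to deduce the statement directly from the classical theory of continued fractions, applying the defining inequality of $B_\gamma$ to the convergents of $\omega$ themselves. Write $\omega=[a_1,a_2,\dots]\in[0,1]$ (so that $a_0=0$) with convergents $p_n/q_n$ given by the usual recurrences $p_n=a_np_{n-1}+p_{n-2}$, $q_n=a_nq_{n-1}+q_{n-2}$, with the conventions $q_0=1$, $q_{-1}=0$. The one nontrivial input, which is entirely classical (see also \cite{Cas}), is the exact error of the $n$-th convergent,
\[
\left|\omega-\frac{p_n}{q_n}\right|=\frac{1}{q_n^{2}\left(\alpha_{n+1}+q_{n-1}/q_n\right)},
\]
where $\alpha_{n+1}=[a_{n+1};a_{n+2},\dots]$ is the $(n+1)$-th complete quotient. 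I would include a one-line derivation of this from $\omega=(\alpha_{n+1}p_n+p_{n-1})/(\alpha_{n+1}q_n+q_{n-1})$ together with $p_nq_{n-1}-p_{n-1}q_n=(-1)^{n-1}$.

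First I would apply the hypothesis $\omega\in B_\gamma$ to the rationals $p_n/q_n$: the bound $|\omega-p_n/q_n|\ge \gamma/q_n^{2}$ combined with the error formula gives at once
\[
\alpha_{n+1}+\frac{q_{n-1}}{q_n}\le \frac{1}{\gamma}\qquad\text{for every }n\ge 0.
\]
The point worth emphasising is that this step uses nothing about best rational approximations: I merely invoke that the $B_\gamma$ condition, which holds for \emph{all} $p/q$, holds in particular at the convergents. This is exactly what makes the desired direction short (the converse, realising numbers with prescribed bounded partial quotients inside $B_\gamma$, is genuinely harder but is not needed here).

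Next, since $\alpha_{n+1}=a_{n+1}+1/\alpha_{n+2}>a_{n+1}$ and $q_{n-1}/q_n\ge 0$ — the value $q_{-1}/q_0=0$ handling the boundary index $n=0$, which is precisely the one controlling $a_1$ — the inequality above yields $a_{n+1}<\alpha_{n+1}\le 1/\gamma$ for all $n\ge 0$. Thus every partial quotient is a positive integer strictly smaller than $1/\gamma$. Setting $C(\gamma)=\lfloor 1/\gamma\rfloor$ therefore bounds $a_i\le C(\gamma)$ for all $i$, and since $1/\gamma-1\le \lfloor 1/\gamma\rfloor\le 1/\gamma$ this constant lies inside the prescribed window $[1/\gamma-2,\,1/\gamma]$, which completes the argument.

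I do not expect any serious obstacle: once the convergent error formula is recalled, the proof is a one-line consequence of evaluating the $B_\gamma$ condition along the convergents, and the only places needing mild care are the boundary index $n=0$ and the elementary check that the chosen $C(\gamma)$ falls in the stated range.
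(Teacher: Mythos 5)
Your proof is correct and follows essentially the same route as the paper: both arguments evaluate the $B_\gamma$ condition at the convergents $p_n/q_n$ and use the classical relation between the convergent error and the partial quotients to conclude $a_{n+1}\le 1/\gamma$. The only cosmetic differences are that you invoke the exact error formula $\left|\omega-\tfrac{p_n}{q_n}\right|=\tfrac{1}{q_n^{2}(\alpha_{n+1}+q_{n-1}/q_n)}$ and take $C(\gamma)=\lfloor 1/\gamma\rfloor$ explicitly (which makes the lower end of the window $[1/\gamma-2,\,1/\gamma]$ trivial to verify), whereas the paper works with the two-sided bounds $\tfrac{1}{q_n^2(2+a_{n+1})}<\left|\omega-\tfrac{p_n}{q_n}\right|<\tfrac{1}{q_n^2 a_{n+1}}$ and defines $C(\gamma)$ implicitly as $\sup_{\omega\in B_\gamma}\sup_n a_n$, additionally showing this supremum is at least $1/\gamma-2$.
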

\begin{proof}To prove this lemma, consider the sequence of convergents of 
$\omega$, 
$\{\frac{p_n}{q_n}\}_{n\in\N}$, which is defined by
\[
\frac{p_n}{q_n}=[a_1,a_2,\cdots,a_n]
\]
The integers $p_n$, $q_n$ satisfy
\[
\begin{split}
p_n=a_np_{n-1}+p_{n-2},\quad n\geq2\\
q_n=a_nq_{n-1}+q_{n-2},\quad n\geq2
\end{split}
\]
where $p_0=a_0=0$, $p_1=1$, $q_0=1$ and $q_1=a_1$. They also satisfy 
\begin{equation}
\label{u-l-bound}
\frac{1}{q_n^2(2+a_{n+1})}<\frac{1}{q_n(q_n+q_{n+1})}\leq\Big|\om-\frac{p_n}{q_n}\Big|\leq\frac{1}{q_nq_{n+1}}<\frac{1}{q_n^2a_{n+1}}.
\end{equation}
For any $\om\in B_\gamma$, there exists 
$\gamma_\om\geq\gamma$, usually called  Diophantine constant, defined by
\[
\inf_{n\geq 0}|q_n(q_n\om-p_n)|=\gamma_\om.
\]
From \eqref{u-l-bound}, one has 
\[
\frac{1}{2+a_{n+1}}<
|q_n(q_n\om-p_n)|<\frac{1}{a_{n+1}}.
\]
Therefore, on the one hand
\[
\inf_{n\geq 1}\frac{1}{a_{n}}\geq\gamma_\om\geq \gamma.
\]
which implies $\sup_{n\geq 1}a_n \leq  \gamma^{-1}$. On the other hand, 
\[
 \inf_{n\geq 1}\frac{1}{a_{n}+2}\leq \gamma_\omega.
\]
which is equivalent to $\sup_{n\geq 1}a_{n}\geq \gamma_\omega^{-1}-2$. Taking 
the supremmum over all 
 $\om\in B_\gamma$ we obtain
 \[
 \sup_{\omega\in B_\gamma}\sup_{n\geq 1}a_{n}\geq \frac{1}{\gamma}-2
 \]
Therefore, we can conclude that
\[
\frac 1\gamma-2\leq C(\gamma)\leq\frac 1\gamma.
\] 
\end{proof}
The set $B_\gamma$ is a closed Cantor set (proved in \cite{P}). Therefore, it 
can be expressed as  $[0,1]\backslash 
B_\gamma=\bigcup_{i=1}^\infty(\alpha_i,\beta_i)$. 
We call $(\alpha_i,\beta_i)$ a 
gap of $B_\gamma$. The collection of the boundary points 
$\{\alpha_i,\beta_i\}_{i=1}^\infty$ is a countable set, which is ordered.

\begin{lem}\label{order}
Consider the set $\cC_K$  of all continuous fractions with entries upper 
bounded by a given  $K$. Then, formally we have 
$[0,1]\backslash\cC_K=\bigcup_{i=1}^\infty(\alpha_i,\beta_i)$ and each gap 
$(\alpha_i,\beta_i)$ can be  expressed either as
\begin{equation}\label{even}
(\alpha_i,\beta_i)=\Big([a_1,a_2,\cdots,a_m,L+1,K,1,K,1,\cdots],[a_1,a_2,\cdots,
a_m,L,1,K,1,K,\cdots]\Big)
\end{equation}
for some even  $m$ or
\begin{equation}\label{odd}
(\alpha_i,\beta_i)=\Big([a_1,a_2,\cdots,a_m,L,1,K,1,\cdots],[a_1,a_2,\cdots,a_m,
L+1,K,1,K,1,\cdots]\Big)
\end{equation}
for some odd  $m$. In both two cases 
$L\in\{1,2,\cdots,K-1\}$.
\end{lem}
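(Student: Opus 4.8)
The plan is to realize $\cC_K$ as a Cantor set built from the continued-fraction cylinders and to read off its gaps from the level-by-level construction. For an admissible word $a_1,\dots,a_m$ (meaning $1\le a_i\le K$) write $p_m/q_m=[a_1,\dots,a_m]$ and parameterize the cylinder $C[a_1,\dots,a_m]$ of all numbers whose expansion begins with this word by the tail $y=[a_{m+1},a_{m+2},\dots]\in[0,1]$ through the M\"obius map $x=(p_m+p_{m-1}y)/(q_m+q_{m-1}y)$. Using the determinant identity $p_mq_{m-1}-p_{m-1}q_m=(-1)^{m-1}$ already recorded above, $dx/dy$ has constant sign $(-1)^m$, so $x$ is increasing in $y$ when $m$ is even and decreasing when $m$ is odd; this parity is exactly what will distinguish \eqref{even} from \eqref{odd}. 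Since the next partial quotient is $a_{m+1}=\lfloor 1/y\rfloor$, the constraint $a_{m+1}\le K$ amounts to keeping $y\in(1/(K+1),1]$ and discarding the subinterval adjacent to the endpoint $y=0$, i.e. to $p_m/q_m$.

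First I would isolate the single extremal computation that drives everything: among continued fractions with all entries in $\{1,\dots,K\}$, the value $[c_1,c_2,\dots]$ is strictly decreasing in each odd-indexed entry and strictly increasing in each even-indexed entry. Hence the largest such tail is the alternating $[1,K,1,K,\dots]$ and the smallest is $[K,1,K,1,\dots]$, the periodic words appearing in \eqref{even}--\eqref{odd}. Consequently, inside the admissible sub-cylinder $\{a_{m+1}=L\}$ the points of $\cC_K$ reach (toward the rational $[a_1,\dots,a_m,L+1]$) only up to $[a_1,\dots,a_m,L,1,K,1,\dots]$ before the tail is forced out of bounded type, and inside $\{a_{m+1}=L+1\}$ only down to $[a_1,\dots,a_m,L+1,K,1,K,\dots]$.

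The core step is then to show that the open interval between these two numbers contains no point of $\cC_K$, so it is a genuine maximal gap. I would argue that, in the $y$-coordinate, this interval is a two-sided neighborhood of $y=1/(L+1)$, i.e. of $r=[a_1,\dots,a_m,L+1]$: approaching $r$ from the $\{a_{m+1}=L+1\}$ side forces the tail after $L+1$ to tend to $0$, hence $a_{m+2}\to\infty$, which is excluded under bounded entries; approaching from the $\{a_{m+1}=L\}$ side forces the tail after $L$ to tend to $1$, again requiring an unbounded partial quotient. Thus the extremal bounded-type tails $[K,1,K,\dots]$ and $[1,K,\dots]$ are genuinely the last admissible points on each side and nothing of $\cC_K$ lies strictly between them; the requirements $a_{m+1}=L+1\le K$ and $a_{m+1}=L\ge 1$ give precisely $L\in\{1,\dots,K-1\}$. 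Conversely, every point that ever leaves the Cantor set is a bounded-type rational $[a_1,\dots,a_m,L+1]$, and the argument above shows it lies in exactly one interval of this form, so these are all the gaps.

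Finally, the labeling $\alpha_i<\beta_i$ follows from the monotonicity sign: the endpoint with $a_{m+1}=L+1$ sits at the smaller value of $y$ and the one with $a_{m+1}=L$ at the larger, so for even $m$ ($x$ increasing in $y$) the former is the left endpoint and we obtain \eqref{even}, while for odd $m$ the orientation reverses and we obtain \eqref{odd}. The main obstacle I anticipate is the bookkeeping around the rational endpoints: because each rational has two continued-fraction representations, one must verify that a removed rational is not accidentally recovered through its alternate expansion and that the two one-sided removed pieces surrounding $r$ merge into a single maximal gap. This is exactly where the extremal-tail estimate, rather than a naive ``next quotient exceeds $K$'' argument, becomes indispensable.
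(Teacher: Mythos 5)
Your proposal is correct, and at its core it rests on exactly the two facts the paper uses: the alternating monotonicity of $[a_1,a_2,\cdots]$ in its entries (decreasing in odd-indexed, increasing in even-indexed entries) and the resulting extremality of the periodic tails $[1,K,1,K,\cdots]$ (maximal) and $[K,1,K,1,\cdots]$ (minimal). The organization, however, is dual to the paper's. The paper argues top-down: a gap $(\alpha_i,\beta_i)$ has its endpoints in the closed set $\cC_K$, the first differing entry of $\alpha_i$ and $\beta_i$ must differ by exactly $1$ (otherwise an intermediate value of that entry would produce a point of $\cC_K$ inside the gap), and the subsequent entries are then forced to be the extremal alternating values. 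You argue bottom-up: you parameterize each cylinder by the tail via the M\"obius map (using the determinant identity to track orientation, which is how you recover the even/odd dichotomy \eqref{even}--\eqref{odd}), construct the candidate gap as a two-sided neighborhood of each removed bounded-type rational $[a_1,\cdots,a_m,L+1]$, and verify maximality from the extremal-tail fact. Your route buys two things the paper leaves implicit: the converse direction (that these intervals exhaust all gaps, i.e.\ that every excluded point lies in one of them), and the careful treatment of the double continued-fraction representation of rationals, so that a removed rational is not recovered via its alternate expansion. One caveat applies equally to your proof, the paper's proof, and indeed the statement itself: the two boundary components of $[0,1]\setminus\cC_K$ adjacent to $0$ and to $1$ (namely $[0,[K,1,K,\cdots])$ and $([1,K,1,K,\cdots],1]$) are not of the form \eqref{even} or \eqref{odd} and contain no bounded-type rational, so your exhaustiveness claim, as literally stated, fails for them; since these gaps also have width $O(1/K)$, this does not affect the density estimate of Lemma \ref{lemma:densityconstantnumbers} for which the result is used.
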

\begin{proof}
Consider the  continuous fraction associated to a constant type number. Namely 
$\om=[a_1,a_2,\cdots]$ with each $a_i\in\{1,2\cdots, K\}$. Then, the one as the 
following  monotonicity: $\om$ decreases when increasing an odd entry and 
increases when decreasing an even entry. This gives a rule to order 
all the continuous fractions with $K-$bounded entries. Since $\cC_K$ does 
not intersect the  gaps 
$(\alpha,\beta)$, the first different entry of $\alpha$ and $\beta$ should 
have a difference by $1$. After that, it can be seen that the following entries 
must have consecutive values, as is shown in \eqref{even} and \eqref{odd}.
\end{proof}

\begin{cor}
The largest gap in 
$[0,1]\backslash\cC_K=\bigcup_{i=1}^\infty(\alpha_i,\beta_i)$ is
\[
\cG_{K}=\Big([2,K,1,K,\cdots],[1,1,K,1,K,\cdots]\Big).
\]
\end{cor}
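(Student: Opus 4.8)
The plan is to compare the lengths of all gaps produced by Lemma \ref{order} and show that the unique longest one is $\cG_K$. By that lemma every gap is determined by a finite prefix $(a_1,\dots,a_m)$ with entries in $\{1,\dots,K\}$, a transition value $L\in\{1,\dots,K-1\}$, and the parity of $m$; its two endpoints are $[a_1,\dots,a_m,L,1,K,1,K,\dots]$ and $[a_1,\dots,a_m,L+1,K,1,K,\dots]$ (in the order dictated by \eqref{even}--\eqref{odd}). First I would record that, writing $p_n/q_n$ for the convergents of the prefix and inserting a real tail $\tau$, one has the M\"obius formula $[a_1,\dots,a_m,\tau]=\frac{p_m+p_{m-1}\tau}{q_m+q_{m-1}\tau}$, with $|p_{m-1}q_m-p_mq_{m-1}|=1$. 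Applying this to the two tails $\tau_1=[L,1,K,1,K,\dots]$ and $\tau_2=[L+1,K,1,K,\dots]$ gives the gap length in the clean factored form
\[
\bt_i-\al_i=\frac{|\tau_1-\tau_2|}{(q_m+q_{m-1}\tau_1)(q_m+q_{m-1}\tau_2)}.
\]

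The numerator $g_0(L):=|\tau_1-\tau_2|$ is exactly the length of the corresponding empty-prefix ($m=0$) gap, so the problem separates into two independent monotonicity questions. For the numerator I would use the self-similarity of the periodic tail: setting $x=[1,K,1,K,\dots]$ one checks $x^2+Kx-K=0$, hence $[K,1,K,\dots]=1/(K+x)=x/K$, and therefore
\[
g_0(L)=\frac{1}{L+x}-\frac{1}{L+1+x/K}=\frac{\,1-\tfrac{(K-1)x}{K}\,}{(L+x)(L+1+x/K)}.
\]
The numerator here is a positive constant independent of $L$, while the denominator strictly increases in $L>0$; thus $g_0$ is strictly decreasing on $\{1,\dots,K-1\}$ and attains its maximum uniquely at $L=1$, which is precisely the gap $\cG_K$ coming from \eqref{even} with empty prefix.

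For the prefix factor, observe that for $m=0$ the denominator $(q_0+q_{-1}\tau_1)(q_0+q_{-1}\tau_2)=1$ since $q_0=1,\ q_{-1}=0$, so the $m=0$ gap length equals $g_0(L)$. For any $m\ge 1$ we have $q_m\ge 1$, $q_{m-1}\ge 1$ and $\tau_1,\tau_2>0$, so each factor $q_m+q_{m-1}\tau_j$ strictly exceeds $1$; hence every gap with a nonempty prefix is strictly shorter than $g_0(L)\le g_0(1)$. Combining the two steps, every gap has length at most $g_0(1)$, with equality only for the empty prefix and $L=1$, giving $\cG_K$ as the unique largest gap. The main obstacle is the first paragraph: setting up the M\"obius/convergent bookkeeping so that the $L$-dependence and the prefix-dependence decouple exactly; once that factorization is in hand, the two monotonicity estimates are elementary.
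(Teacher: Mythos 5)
Your proof is correct, and it is substantially more complete than the paper's own argument. The paper's proof is a three-line sketch: it bounds each gap diameter by the difference of the two finite truncations $[a_1,\ldots,a_m,L]$ and $[a_1,\ldots,a_m,L+1]$, asserts a monotonicity in $m$ (stated, apparently with a typo, as ``the smaller $m$ is, the smaller the diameter''), and concludes that the differing entry must occur in the first position; it never addresses why $L=1$ maximizes among the empty-prefix gaps, nor does it actually prove the asserted monotonicity in the prefix length. Your argument replaces the truncation bound with an exact M\"obius factorization of the gap length into the tail difference $|\tau_1-\tau_2|$ divided by the prefix factor $(q_m+q_{m-1}\tau_1)(q_m+q_{m-1}\tau_2)$, which cleanly decouples the two optimization problems: the quadratic $x^2+Kx-K=0$ for the periodic tail makes the $L$-monotonicity an explicit computation, and the observation that the prefix factor equals $1$ for $m=0$ and strictly exceeds $1$ for $m\geq 1$ disposes of all nonempty prefixes at once. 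Both arguments share the same skeleton (shortest prefix wins, then optimize over $L$), but yours supplies the two quantitative steps the paper leaves unproved and yields uniqueness of the largest gap as a byproduct; the paper's version buys only brevity.
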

\begin{proof}
In   Lemma \ref{order} we have shown that
\[
\begin{split}
0<\beta_i-\alpha_i=\,\text{diam}\,(\alpha_i,\beta_i)
<\,\Big|[a_1,a_2,\cdots,a_m,L]-[a_1,a_2,\cdots,a_m,L+1]\Big|\nonumber,
\end{split}
\]
where $a_i\geq 1$ for all $i=1,\cdots,m$. So the smaller $m$ is, the smaller 
the diameter of the gap. Thus the first different entry has to be $m=1$. Lemma 
\ref{order} gives all the other entries in the continuous fraction expansion.
\end{proof}

This corollary implies the proof of Lemma \ref{lemma:densityconstantnumbers}. 
Indeed $B_\gamma$ contains $\cC_K$ with 
\[
 K=\frac 1\gamma -2.
\]
Then, the  width of the largest gap in $B_\gamma$ cannot exceed the width of 
the interval
\[
\cG_{K}=\Big([2,K,1,K,\cdots],[1,1,K,1,K,\cdots]\Big),
\]
which is bounded by $O(1/K)$. Thus
$B_\gamma$ is at least $O(\gamma)$-dense in $[0,1]$.


\begin{thebibliography}{9999}
\bibitem{A} Alekseev V. {\it Final motions in the three body problem and symbolic dynamics}, Russian Math. Surveys 36:4 (1981), 181-200.
\bibitem{A2} Alexeyev V. {\it Sur l'allure finale du mouvement dans le probl\`eme des trois corps}, Actes du Congr\`es International des math\'ematiciens, t.2 (Nice, Sept. 1970), Gauthier-Villars, Paris, 1971, 893-907.
\bibitem{Ar} Arnol'd V. I.  {\it Small denominators and problems of stability of 
motion in classical and celestial mechanics}, Uspekhi Mat. Nauk 18:6 (1963), 
91-191. MR 30, 943. Russian Math. Surveys 18:6 (1963), 85-160.
\bibitem{AKN} Arnold, V.I., Kozlov, V.V., Neishtadt, A.I. {\it Dynamical Systems 
{I}{I}{I}}, Springer (1988).
\bibitem{Bi} Birkhoff G.D. {\it Dynamical systems}, Amer. Math. Soc. Colloquium Pub. Vol. IX, 1966, p.290.
\bibitem{BM} Bolotin S. and  Mckay R. {\it Periodic and chaotic trajectories 
of the second species for the $n-$center problem}, Celestial Mechanics and 
Dynamical Astronomy 77: 2000, 49-75.
\bibitem{BM1} Bolotin S. and Mckay R.  {\it Nonplanar second species periodic 
and chaotic trajectories for the circular restricted three-body problem}, 
Celestial Mech. Dynam. Astronom. 94 (2006), no. 4, 433-449.
\bibitem{B} Bolotin, S.  {\it Second species periodic orbits of the elliptic 3 body problem}, Celestial Mech. Dynam. Astronom. 93 (2005), no. 1-4, 343-371.
\bibitem{B1} Bolotin S. {\it Symbolic dynamics of almost collision orbits and skew products of symplectic maps}, Nonlinearity 19(9): 2006, 2041-2063.
\bibitem{B2}Bolotin, S., Negrini, P. {\it Variational approach to second species periodic solutions of Poincar\'e(C) of the 3 body problem},
Discrete Contin. Dyn. Syst. 33(3) 1009-1032  (2013). 
\bibitem{Cas} Cassels J. W. S. {\it An introduction to Diophantine Approximation}, the Syndics of the Cambridge University Press, 1957.
\bibitem{Cha}  Chazy J., {\it Sur l'allure finale du mouvement dans le probl\'eme des trois corps quand le temps croit indefiniment}, Ann. Sci. \'Ecole Norm. Sup. (3) 39 (1922), 29-130.
\bibitem{CL} Chenciner A. and Llibre J. {\it A note on the existence of 
invariant punctured tori in the planar circular restricted three body problem}, 
Ergod. Th. \& Dynam. Sys. (1988), No.8 63-72.
\bibitem{CP} Chierchia and  L. Pinzari, G. 
{\it The planetary N-body problem: symplectic foliation,
reductions and invariant tori}, Invent. Math., 186(1):1--77, 2011.
\bibitem{F} Fejoz, J, {\it Quasi periodic motions in the planar three-body problem}, J. Differential Equations 183 (2002), no. 2, 303-341.
\bibitem{Fe2} Fejoz, J, 
{\it D\'emonstration du  th\'eor\`eme d'Arnold sur la stabilit\'e du 
syst\`eme plan\'etaire (d'apr\'es Michael Herman)}, 
Ergodic Theory Dyn. Sys. 24:5 (2004) 1521--1582.
\bibitem{FGKR} F\'ejoz J., Guardia M., Kaloshin V. \& Rold\'an P. {\it Kirkwood gaps and diffusion along mean motion resonances in the restricted planar three body problem}, arXiv:1109.2892, 2011.
\bibitem{G} Grobman D. {\it Homeomorphism of systems of differential equations}, Doklady Akad. Nauk SSSR 128(1959) 880-881.
\bibitem{GHR} Guysinsky M., Hasselblatt B.\& Rayskin V. {\it Differentiability of the Hartman Grobman Linearization}, Discrete and Continuous Dynamical Systems 9, no. 4 (2003) 979-984.
\bibitem{Ha} Hartman P. {\it A lemma in the theory of structural stability of differential equations}, Proceedings of the American Mathematical Society 11 (1960), 610-620; {\it On the local linearization of differential equations},
Proceedings of the American Mathematical Society 14 (1963) 568-573.
\bibitem{H} Herman M. {\it Sur les courbes invariantes par les diff\'eomorphismes de l'anneau}, Soc. Math. De France, 1986, p.248.
\bibitem{Her} Herman M. {\it Some open problems in dynamical systems}, 
Proceedings of the ICM, 1998, Volume II, {\it Doc. Math. J.} DMV, 797-808.
\bibitem{Knauf1} Knauf A. and Fleischer S. {\it Improbability of Wandering 
Orbits Passing Through a Sequence of Poincare Surfaces of Decreasing Size}, 
available on arxiv.org/abs/1802.08566
\bibitem{Knauf2} Knauf A. and Fleischer S. {\it Improbability of Collisions in 
n-Body Systems}, available on arxiv.org/abs/1802.08564
\bibitem{K} Kolmogorov A. N. {\it On the conservation of conditionally periodic 
motion under small perturbations of the Hamiltonian}, Doklady Akademii Nauk SSR 
98: 527-530.
\bibitem{MN} Marco J. P. and Niederman L. {\it Sur la construction des solutions de seconde espece dans le probleme plan restreint des trois corps}, Ann. Inst. H. Poincar\'e Phys. Th\'eor., 62(3):211-249, 1995.
\bibitem{Moe2} Moeckel R. {\it Orbits of the three-body problem which pass 
infinitely close to triple collision}, Amer. J. Math., 103(6): 1323-1341, 1981.
\bibitem{Moe3} Moeckel R. {\it Chaotic dynamics near triple collision}, Arch. 
Rational Mech. Anal., 107(1):37-69, 1989.
\bibitem{Moe1} Moeckel, R. {\it Symbolic dynamics in the planar three-body 
problem}, Regular and Chaotic Dynamics, 12(5) 449-475, 2007.
\bibitem{Poi} Poincar\'e H. {\it Les m\'ethodes nouvelles de la m\'ecanique 
c\'eleste}, 3 vols., Gauthier- Villars, Paris 1892-1894.
\bibitem{P} P\"oschel J. {\it Integrability of Hamiltonian systems on Cantor 
sets}, Comm. on Pure. and Applied Math. Volume 35, Issue 5. September 1982. 
Pages 653-696.
\bibitem{Rob1} Laskar J. and  Robutel P. \textit{Stability of the planetary 
three-body problem. {I}. 
              {E}xpansion of the planetary {H}amiltonian} Celestial Mech. 
Dynam. Astronom. 62(3) 1995, 193--217.
\bibitem{Rob2} Robutel P. {\it Stability of the planetary three-body problem. 
{II}. {KAM}}, Celestial Mech. Dynam. Astronom. 62(3) 1995, 219--261.
\bibitem{Sa1} Saari D. G. {\it Improbability of collisions in Newtonian 
gravitational systems}. 
Trans. Amer. Math. Soc. 162 (1971), 267-271.
\bibitem{Sa2} Saari D. G. {\it Improbability of collisions in Newtonian gravitational systems. II}. 
Trans. Amer. Math. Soc. 181 (1973), 351-368.
\bibitem{S} Siegel C.L., {\it Vorlesungen iiber Himmelsmechanik}, Springer-Verlag, Berlin-Gottingen- Heidelberg 1956. MR 18-178.
\bibitem{Si} Sitnikov K.A., {\it The existence of oscillatory motions in the three-body problem}, Dokl. Akad. Nauk SSSR 133 (1960), 303-306. MR 23  B435. Soviet Physics Dokl. 5 (1961), 647-650.
\bibitem{Z} Zhao L. {\it Quasi-Periodic Almost-Collision Orbits in the Spatial Three-Body Problem}, Communications on Pure and Applied Mathematics, Vol. LXVIII, 2144?2176 (2015).
\end{thebibliography}
\end{document}